\pgfplotsset{compat=1.3}
\newcommand{\C}{\mathbb{C}}
\newcommand{\R}{\mathbb{R}}
\renewcommand{\phi}{\varphi}
\newcommand{\de}{\mathop{}\!\mathrm{d}}
\renewcommand*{\i}{\mathrm{i}}
\DeclarePairedDelimiter{\inner}{(}{)}
\DeclareMathOperator{\diag}{diag}
\DeclareMathOperator{\Id}{Id}
\DeclareMathOperator*{\argmin}{arg\,min}
\newenvironment{curlyeq}{\left\{\quad\begin{aligned}}{\end{aligned}\right.}
\newcommand*{\Lap}{\boldsymbol{\upDelta}}
\newcommand*{\dt}{\Delta t}
\newcommand*{\Hcont}{\mathcal{H}}
\newcommand*{\Jcont}{\mathcal{J}}
\newcommand*{\Fcont}{\mathcal{G}}
\newcommand*{\disc}[1]{\vb*{#1}}
\newcommand*{\Hdisc}{\disc{H}}
\newcommand*{\Jdisc}{\disc{J}}
\newcommand*{\Fdisc}{\disc{G}}
\newcommand*{\Ddiv}{\mathop{\div}_{\grid{E}\to \grid{I}}}
\newcommand*{\Dgrad}{\mathop{\grad}_{\grid{I}\to \grid{E}}}
\newcommand*{\Dcurl}{\mathop{(\hat{k}\vdot\curl)}_{\grid{E}\to \grid{V}}}
\newcommand*{\Dgradp}{\mathop{\grad^\perp}_{\grid{V}\to \grid{E}}}
\newcommand*{\Dkcross}{\mathop{\hat{k}\cross}_{\grid{E}\to \grid{E}}}
\newcommand*{\DLap}{\widetilde{\mathop{\Lap}}_{\grid{E}\to \grid{E}}}
\newcommand*{\grid}[1]{\mathsf{#1}}
\newcommand*{\reference}[1]{{#1}^{\textrm{ref}}}
\newcommand*{\laylowL}{\widehat{L}}
\newcommand*{\laylow}[1]{\widehat{#1}}
\DeclareMathOperator{\Cour}{C}
\newcommand*{\dtC}{\dt_{\mathrm{C}}}
\newcommand*{\arnoldi}[1]{\mathbb{#1}}
\DeclarePairedDelimiter{\inp}{\{}{\}}
\numberwithin{equation}{section}
\theoremstyle{plain}
\newtheorem{theorem}{Theorem}
\numberwithin{theorem}{section}
\newtheorem{proposition}[theorem]{Proposition}
\newtheorem{corollary}[theorem]{Corollary}
\theoremstyle{definition}
\theoremstyle{remark}
\newtheorem{remark}{Remark}
\begin{document}






\author[1]{Konstantin Pieper}
\author[1,2]{K.\ Chad Sockwell}
\author[1]{Max Gunzburger}
\affil[1]{%
Florida State University,
Dept.\ Scientific Computing,
400 Dirac Science Library,
Tallahassee FL 32306
\textup{(\texttt{kpieper@fsu.edu}, \texttt{kcs12j@my.fsu.edu}, \texttt{mgunzburger@fsu.edu})}}
\affil[2]{
Los Alamos National Laboratory,
P.O. Box 1663, T-3, MS-B216,
Los Alamos, NM 87545
\textup{(\texttt{sockwell@lanl.gov})}
}
\date{12 Jul 2019}


\title{Exponential time differencing for mimetic multilayer ocean models}

\maketitle

\begin{abstract}
A framework for exponential time discretization of the multilayer rotating shallow water
equations is developed in combination with a mimetic discretization in space.
The method is based on
a combination of existing exponential time differencing (ETD) methods and a careful choice
of approximate Jacobians. The discrete Hamiltonian structure and conservation properties
of the model are taken into account, in order to ensure stability of the method for large
time steps and simulation horizons. In the case of many layers, further efficiency can be
gained by a layer reduction which is based on the vertical structure of fast
and slow modes. Numerical experiments on the example of a mid-latitude regional
ocean model confirm long term stability for time steps increased by an order of magnitude
over the explicit CFL, while maintaining accuracy for key statistical quantities.
\end{abstract}



\section{Introduction}

Despite their relevance in climate modeling, the numerical solution of the primitive
equations used for the modeling of
global or regional oceanic circulation remains challenging. This is
due to the fact that the partial differential equations
underlying the derivation of the primitive equations are of hyperbolic type,
since physical diffusion terms are negligible at practically feasible grid resolutions.
Concerning time discretization, a particular challenge lies in the presence of multiple
time scales (due to, e.g., fast free-surface wave modes or locally refined meshes near
coastal boundaries), which requires special schemes to take advantage of this
structure. Otherwise, straightforward explicit integrators/Runge-Kutta schemes -- which are
usually very effective for problems of hyperbolic character -- are restricted to an
excessively small time step, degrading performance.
Due to these requirements, specialized implicit methods based on the structure of
the fast vertical mode have been
developed; see, e.g.~\cite{Dukowicz_Smith94jgro}. However, they can be
affected by loss of accuracy due to high frequency error for large
time steps. Moreover, scalability concerns arise on parallel computers due to the requirement of
solving large linear systems.
Subsequently, split-explicit methods~\cite{higdon2005two,ringler2013multi} have been developed and applied with great
success, which treat fast and slow modes with different explicit time
discretization schemes. For an overview over the earlier developments in implicit and
split-explicit time stepping methods for atmosphere and ocean models, we also refer
to~\cite{Madala:1981} and~\cite[Section~5]{Higdon:2006}.

Recently, exponential integrators (see, e.g., \cite{hochbruck2010exponential}), also
called exponential time differencing methods (ETD), have
gained attention in the context of circulation models~\cite{archibald2011multiwavelet,clancy2013use,gaudreault2016efficient,luan2019further}.
Due to the presence of multiple time scales, ETD methods seem well suited to enable
efficient large time step computations together with a reasonably accurate representation of high
frequency dynamics.
For the purposes of this paper, we consider a simplified ocean model which still
exhibits all of the difficulties mentioned above.
Concretely, we restrict attention to the rotating shallow water
equation (RSWE) with multiple horizontal layers, which corresponds to a vertical
discretization of the primitive equations cast in an isopycnal vertical coordinate
system. Concerning the spatial discretization, mimetic
finite difference/finite volume (FD/FV) schemes have proven to be very
effective here. Specifically, we work with the TRiSK scheme~\cite{TRiSK:2009,ringler2010unified},
which has many of the features of classical FD/FV schemes on Cartesian grids
but additionally allows for the use of multi-resolution meshes. We
emphasize that the resulting discretization can be set up to inherit the Hamiltonian
structure of the underlying RSWE, which leads to exact energy conservation of the space
discrete model.
Based on this, we develop a framework for exponential time discretization which relies on
a combination of existing exponential Runge-Kutta (ETD-RK) methods (see, e.g.,
\cite{hochbruck2010exponential,hochbruck2005explicit}), developed for semi-linear equations
of the form
\[
\partial_t \disc{V} = \disc{F}[\disc{V}] = \disc{A} \disc{V} + \disc{r}[\disc{V}]
\]
with an appropriate choice of the linear operator \(\disc{A}\).
Here, we prefer an approximation to the Jacobian of the forcing term over
the full Jacobian \(\disc{F}'\) (which would result in a Rosenbrock-ETD method),
due to favorable properties concerning the implementation and structure
of the linear operator and its numerical treatment. Physically, the proposed choice of
\(\disc{A}\) neglects the linearized advection and potential vorticity dynamics, which
typically evolve on a relatively slow time scale, while still capturing the fast external
(and internal) gravity waves. This leads to a class of explicit exponential Runge-Kutta
methods which can take time steps significantly increased over an explicit integrator,
while still maintaining stability and sufficient accuracy.

On the discrete level, the proposed class of linear operators
inherits the Hamiltonian structure and corresponds to a skew-symmetric matrix with respect
to an appropriate inner product.
In turn, this enables the use of specialized efficient skew-Lanczos methods for the practical evaluation of the matrix exponentials and
\(\phi\)-functions, which are required for the implementation of an ETD method.
Moreover, the matrix exponential \(\exp(\dt\disc{A})\) maintains the linearized energy of the
RSWE for all \(\dt\), which improves numerical stability for large time steps.
Since \(\disc{A}\) describes the linearized
free-surface and internal gravity waves around a reference configuration,
we can additionally use the knowledge of the approximate
structure of the fast and slow wave modes to further reduce the computational complexity.
This is done by performing an additional projection of the linear operator onto the fast
subspace, where we take special care to preserve the symmetry properties of \(\disc{A}\). In a
typical configuration of a global ocean model, the difference in the free-surface speed
and speed of internal gravity waves is greater that an order of magnitude. Thereby, this
projection enables computational savings proportional to the number of layers, while still capturing
the free-surface dynamics in the linear operator. Thus, we obtain a faster
method at the cost of
additional restrictions on the maximal stable time step, due to the neglected internal modes.

In order to enable stable computations for very long simulation horizons
(typically decades, in the context of climate models),
additional diffusion terms have to be incorporated into the discrete model, in order to
prevent a build-up of turbulent energy in the smallest (grid-level) scales. Here, we
employ a variant of the classical biharmonic smoothing. Since optimal
choices of the parameters of these diffusion
terms are typically not stiff when compared to the fastest gravity waves, for efficiency
we treat them explicitly, by adding them to the residual \(\disc{r}\). However, since
\(\disc{A}\) does not take into account the dissipation, this can lead to a spurious
build-up of kinetic energy in the smallest scales for large time step simulations (over
the course of several months). To remedy this, we describe a simple method of adding a
minimal amount of artificial high-frequency dissipation at minimal cost,
by tuning the matrix \(\phi\)-functions occurring in the method. The described procedure
can be set up to maintain the formal order of accuracy of the scheme.

Finally, since exact mass conservation on the discrete level is an essential requirement for
long-running simulations, we take care that the proposed methods fulfill
this basic requirement. This is obtained by proving that the considered exponential integrators preserve linear
invariants for an appropriate choice of \(\disc{A}\).

This paper is structured as follows: In
section~\ref{sec:continuous} we introduce the concrete space and time continuous model and
the underlying Hamiltonian structure. Section~\ref{sec:TRiSK} summarizes the necessary
details on the spatial discretization scheme. In section~\ref{sec:ETD} the relevant background on
exponential integrators, the efficient evaluation of the matrix exponential, and the proposed
artificial dissipation strategy is given.
Section~\ref{sec:mode_splitting} is devoted to the layer reduction strategy, which allows
to take advantage of the vertical structure of the fast modes.
In Section~\ref{sec:results},
we test the methods based on a simplified regional mid-latitude ocean model. In particular, we show
that the methods deliver high order accuracy for large time step configurations,
and investigate the effect of the artificial diffusion. Moreover, we perform decade long
simulations with several configurations of the methods. Here, single trajectories can not be
compared anymore due to the underlying chaotic structure of the model. However, we verify
that key statistical quantities, such as mean flow and variance of the sea-surface height
are accurately replicated in each simulation, while significant cost reductions are
achieved over an explicit time discretization scheme.

\section{Continuous equations}
\label{sec:continuous}

The governing equations used in this work are the multilayer rotational shallow water equations,
which serve as proxy to the primitive equations in the MPAS-O
model~\cite{ringler2013multi}. For the sake of readability, we first explain the
single-layer model, and then the multilayer extension.

\subsection{Single-layer rotating shallow water equations} The model equations
for this work are defined on a spherical surface, with a variable bottom topography, and
with multiple layers considered. We will start with the simple case of the single-layer
equations.
We denote by $\mathbb{S}^2$ the two-sphere with outward oriented surface-orthogonal unit vector
\(\hat{k}\), and by \(\Omega\) an open sub-manifold with boundary \(\partial\Omega\) 
and outer normal \(\hat{n}\). The time variable is denoted by \(t \in \R\).
The single-layer rotating shallow water equations can now be expressed
in terms of the fluid thickness $h\colon \R\times\Omega \to \R$ and the velocity
$u \colon \R\times\Omega \to \R^3$ in the vector-invariant form as
\begin{equation}
\label{eq:sw}
\begin{curlyeq}
\partial_t h &= - \div{(h u)} &&\text{in}\;\;\Omega, \\
\partial_t u &= - \grad{\left(K[u] + g(h+b)\right)} - q[h,u] \, \hat{k}\cross (h u) +
               \Fcont[h,u] &&\text{in}\;\;\Omega,
\end{curlyeq}
\end{equation}
together with the constraint that the velocity should be tangential to the surface \(u
\vdot \hat{k} = 0\) in \(\Omega\), the no normal flow
boundary condition \(u \vdot \hat{n} = 0\) on \(\partial\Omega\), and appropriate initial
conditions on $h$ and $u$. Here,
$K[u] = \abs{u}^2/2 = (u \vdot u)/2$ is the kinetic energy, $\hat{k} \cross u$ the
perpendicular velocity, $q[h,u] = (\hat{k}\vdot\curl{u} + f)/h$
is the potential vorticity with $f$ the Coriolis parameter. The bathymetry $b<0$ 
encodes the bottom topography. The differential operators are defined in the canonical
way on $\mathbb{S}^2$. The term $\Fcont(h,u)$ contains additional forcing, arising
either from wind or bottom drag or possible diffusion terms, which will be
detailed later. For now, we only assume that
\(\hat{k} \vdot \Fcont(h,u) = 0\), to ensure the consistency of the momentum equation
with the constraint on the velocity.

The rotating shallow water equations~\eqref{eq:sw} can also be given in a more
abstract form, using a Hamiltonian framework. This also
provides an abstract way to guarantee energy conservation
(in the case \(\Fcont \equiv 0\)). Consider the total energy over the domain as
given by the Hamiltonian
\begin{equation}\label{eq:H_sl}
\Hcont[h,u] = \int_\Omega\left( h K[u] + \frac{g}{2} (h+b)^2 \right).
\end{equation}
Furthermore, introduce a skew-symmetric operator $\Jcont$ given formally by
\begin{equation*}\label{eq:J_sl}
\Jcont[h,u] =
\begin{pmatrix}
0 & -\div\\
-\grad & -q[h,u]\,\hat{k}\cross 
\end{pmatrix}\;.
\end{equation*}
In the following, we abbreviate the solution variables by $V = (h,u)$. Furthermore, we
endow the solution space \(\mathcal{X} = L^2(\Omega)\times L^2(\Omega, \R^3)\) by its canonical
Hilbert space structure. We denote the inner product by
\[
\inner{V,W}_{\mathcal{X}} = \int_\Omega (v^h w^h + v^u\vdot w^u) \de x,
\]
where \(V = (v^h, v^u)\) and \(W = (w^h, w^u)\) are elements of \(\mathcal{X}\).
The shallow water equations can then be formed using the
functional derivative of $\Hcont$ given as
\begin{equation}
\label{eq:fdH}
\delta \Hcont[V]
= \fdv{\Hcont}{V}[V]
=
\begin{pmatrix}
K[u] + g(h+b)\\
h u
\end{pmatrix}\;,
\end{equation}
which is identical to the Hilbert space gradient of the energy
functional.
In detail, for a perturbation \(W = (w^h, w^u)\) the directional derivative of
 the Hamiltonian \(\Hcont\) (if it is well-defined) fulfills
\[
\Hcont'[V;W]
= \frac{\de}{\de\tau} \Hcont[V + \tau W]\Big\rvert_{\tau=0}
= \int_\Omega \left((K[u] + g(h+b)) w^h + h u \vdot w^u\right)
= \inner*{\fdv{\Hcont}{V}[V], W}_{\mathcal{X}}.
\]
The first identity shows that~\eqref{eq:fdH} indeed gives the functional derivative from
the calculus of variations, the second identity gives the interpretation as a gradient
with respect to the space \(\mathcal{X}\).
Note that we abbreviate the functional derivative by
$\delta \Hcont [V]$, since the argument of differentiation is clear.
In the following, we will also need the Jacobian of the functional derivative of the
Hamiltonian (the Hessian), denoted by $\delta^2 \Hcont[V]$ where
\(\delta^2 \Hcont [V]\,W = \delta \Hcont'[V;W]\).

Then, we interpret the boundary conditions as incorporated into the solution
space, and  obtain~\eqref{eq:sw} in abstract form as
\begin{equation}\label{eq:sw_ham}
\partial_t V = \Jcont[V] \, \delta\Hcont[V] +
\begin{pmatrix}
0 \\
\Fcont[h,u]
\end{pmatrix}
\;.
\end{equation}
In this work we will often appeal to the Hamiltonian framework to formulate the main
ideas in a concise way. We note that the formal continuous description given above serves
also serves as a motivation for the
employed discrete scheme introduced below, which inherits the Hamiltonian structure.
However, all of the developments can also be carried out without this formalism (and
transferred to different discretization schemes under certain assumptions), but in a less
direct way. Conversely, the main ideas apply also to different sets of equations, provided
they can be written in terms of this framework.
 
\subsection{Extension to multiple layers}\label{sec:ml}
In order to model discrete stratification of bodies of water, multiple layers can be
stacked on top of each other with each layer being modeled by its own set of shallow water
equations. Although layered stratification does not fully represent continuous
stratification, it is an efficient and robust model for describing ocean dynamics. In the
multilayer system, each layer's density is set a priori and is considered the average
density of the layer. The density is increasing with water depth in order to produce a stable
configuration. Knowing the density $\rho$ of each layer a priori gives a convenient set of
Lagrangian coordinates, namely the isopycnal contours that separate the layers with
different densities.
This gives what is known as isopycnal coordinates.

\begin{figure}
\centering
\def\svgwidth{0.75\textwidth}
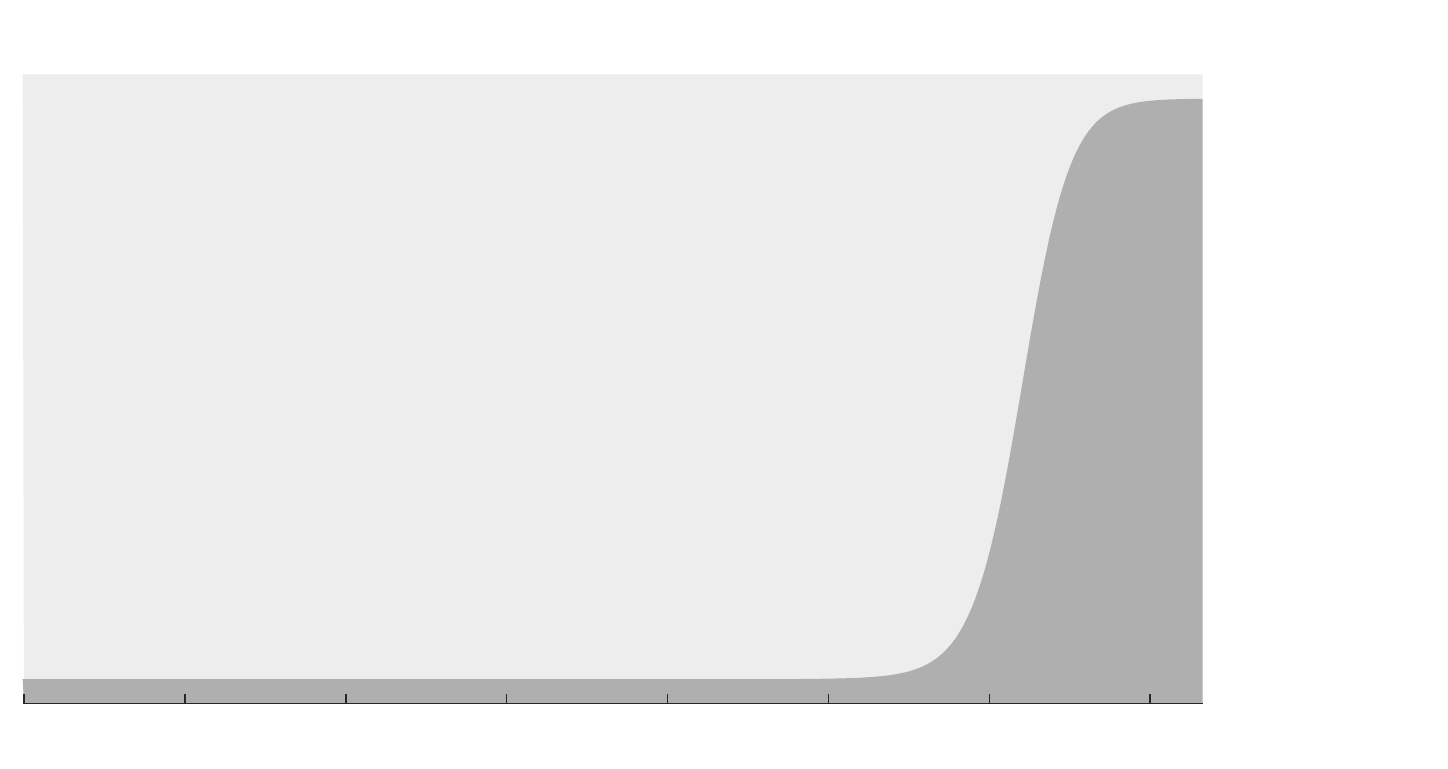
\caption{Visualization of a three layer configuration on the SOMA bathymetry.}
\label{fig:layers}
\end{figure}

The rotating shallow water equations~\eqref{eq:sw} can easily be extended to deal with
a body of water with multiple layers, using isopycnal coordinates. Consider a water basin
that is separated into $L$ layers, and posses $L$ distinct densities $\rho_k$, with
$\rho_1$ being the density for the top layer and $\rho_{k-1} < \rho_{k}$ for
$1 \leq k \leq L$ and the zeroth density defined as \(\rho_0 = 0\) for
convenience (which represents the density of air, much smaller than that of water).
The velocities \(u_k\) and \(h_k\) are now defined for each layer, and the
corresponding layer coordinates \(\eta_k\) are defined by
\[
\eta_{L+1} = b,
\;\;\eta_k[h] = b+\sum_{l=k}^{L} h_l,\;\; k = 1,\dotsc,L.
\]
A visualization is given in Figure~\ref{fig:layers}.
In the following, we denote by \(u\), \(h\) and \(\eta\) the vectors containing all layer
variables, and by \(V = (h, u)\) and \(V_k = (h_k, u_k)\) the combined solution variable.
The solution space is \(\mathcal{X}^L = L^2(\Omega)^L\times L^2(\Omega, \R^3)^L\), which
is endowed with the canonical inner product.
The multilayer rotating shallow water equations for $L$ layers are now given by 
\begin{equation}\label{eq:sw_ml}
\begin{curlyeq}
\partial_t h_k &= - \div{(h_k u_k)} &&\text{in}\;\;\Omega\;, \\
\partial_t u_k &= - \grad{\left(K[u_k] + (g/\rho_k) p_k[h] \right)}
 - q[h_h,u_k]\,\hat{k}\cross(h_k u_k)
 + \Fcont_k(h,u) &&\text{in}\;\;\Omega\;,\\
\end{curlyeq}
\end{equation}
where the main modification with respect to the single-layer case is in the pressure term,
which is defined in each layer as
\[
p_k[h]
= \rho_k \eta_{k+1}[h] + \sum_{l=1}^k \rho_l h_l
= \rho_k \eta_{k}[h] + \sum_{l=1}^{k-1} \rho_l h_l.
\]
The Hamiltonian framework can be extended for the multilayer case (see, e.g.,
\cite{stewart2016energy}). The multilayer Hamiltonian $\Hcont$ can be defined as
\begin{equation}
\label{eq:ham_ml}
\Hcont(h,u)
= \sum_{k=1}^{L} \rho_k\int_\Omega\left( h_k K[u_k]
  + g h_k\left(\eta_{k+1}[h] + h_k/2\right) \right) \de x\;.
\end{equation}
The skew-symmetric operator $\Jcont[h,u]$, consists of \(L\) copies of
\(\Jcont[h_k,u_k]\), represented schematically by a block diagonal matrix
\[
\Jcont[V] = \diag_{k=1,\ldots,L}\frac{1}{\rho_k}\Jcont[h_k,u_k],
\]
with the scaled single-layer operator on the diagonal.
In a weak formulation this corresponds to
\begin{align*}
  (W, \Jcont[V] Y)_{\mathcal{X}^L} &= \sum_{k=1}^L \frac{1}{\rho_k} (W_k ,\Jcont_k [V_k] V_k)_{\mathcal{X}} \\
&= - \sum_{k=1}^L \frac{1}{\rho_k}
  \int_\Omega \left( w^h_k \div y^u_k + w^u_k \vdot \grad y^h_k + q[h_k,u_k] \, w^u_k
  \vdot \hat{k} \cross y_k^u) \right) \de x,
\end{align*}
where \(W = (w^h_k,w^u_k)_{k=1,\dotsc,L}\) and \(Y = (y^h_k,y^u_k)_{k=1,\dotsc,L}\).
The concrete form of~\eqref{eq:sw_ml} can now be derived from~\eqref{eq:sw_ham}, as
before. Note that the multilayer case contains the single-layer case for the special choice
of one layer with arbitrary density.

Alternatively, another version of the multilayer Hamiltonian can be given as
\begin{equation}
\label{eq:ham_ml_easy}
\begin{aligned}
\widetilde{\Hcont}(h,u)
&= \sum_{k=1}^{L} \int_\Omega\left( \rho_k h_k K[u_k]
  + \frac{g}{2} \, \Delta \rho_k \, (\eta_k[h])^2 \right), 
\end{aligned}
\end{equation}
where \(\Delta\rho_k = \rho_k - \rho_{k-1}\) for \(k = 1,\ldots,L\).
Further structure can be exposed by introducing the summation matrix \(T\) with entries
\[
T_{k,j} = \begin{cases} 1 & \text{if } k \leq j, \\
0 & \text{else}.
\end{cases}
\]
It allows to express the layer coordinates as \(\eta_k = b + (T h)_k\), where the
matrix \(T\) operates on the layer variables in an obvious way. More abstractly, we also
write \(\eta = b + Th\). This allows to rewrite the
last term in~\eqref{eq:ham_ml_easy} as
\[
\frac{g}{2} \sum_{k=1}^L \Delta\rho_k (\eta_k[h])^2
= \frac{g}{2} \norm{b + T h}^2_{\Delta\rho},
\]
where \(\norm{\eta}^2_{\Delta\rho} = \sum_{k=1}^L \Delta\rho_k \eta_k^2\) corresponds to a
weighted Euclidean norm. Taking the functional derivative of this term, we obtain 
\(g \, T^\top\diag(\Delta\rho)(T h + b) = g \widetilde{p}_k[h]\), and the corresponding pressure
can be rewritten as:
\begin{align*}
\widetilde{p}_k[h] 
= \rho_1\eta_1[h] + \sum_{l=2}^k (\rho_{l} - \rho_{l-1})\eta_l[h]
= \rho_{k} \eta_{k}[h] + \sum_{l=1}^{k-1} \rho_l h_l
= p_k[h].
\end{align*}
Thus, both Hamiltonians lead to the same pressure and it holds \(\delta\Hcont =
\delta\widetilde\Hcont\). Consequently,
both Hamiltonians are equal up to a constant
value (i.e.\ \(\widetilde{\Hcont} \equiv \Hcont + \textit{const}\)).

Since \(\Delta \rho_1 = \rho_1\) is typically much larger than \(\Delta\rho_k
= \rho_k - \rho_{k-1}\) for \(k > 1\), the pressure differences induced by the free
surface are much larger than the pressure differences stemming from perturbations of the
internal layers. This gives rise to the well-known separation of vertical modes into a
fast barotropic mode, and the remaining slow baroclinic modes; see, e.g.,~\cite{Dukowicz:2006}.
We give an independent exposition that is relevant for the development of the
paper in the next section.

\subsection{Linearization of the model and modes}
\label{sec:lin_and_modes}

We perform a linearized perturbation analysis of~\eqref{eq:sw_ml} for \(\Fcont = 0\),
in order to understand the structure of the fastest modes of certain
linearizations of~\eqref{eq:sw_ml}.
This will be used later to develop 
appropriate linear operators to be used for the exponential time integrators.
For a more in-depth analysis of the fast barotropic
mode arising in ocean models; cf.\ also~\cite{Dukowicz:2006}.
Using the Hamiltonian formalism, the linearized equation for a perturbation
\(V = \reference{V} + W\) can be written as
\begin{equation}\label{eq:sw_ham_lin}
\partial_t W
= \Jcont'[\reference{V}; W]\delta\Hcont[\reference{V}]
+ \Jcont[\reference{V}]\delta^2 \Hcont[\reference{V}]\, W,
\quad W(0) = W_0
\end{equation}
by an application of the product rule, recalling the convention
\(\delta^2 \Hcont[V] W = (\delta\Hcont)'[V;W]\).
The first term contains the
derivatives of \(\Jcont\) with respect to \(V\) and is given for any perturbation \(W\) as
\[
\Jcont'[\reference{V};W] = \diag_{k=1,\dotsc,L}\frac{1}{\rho_k}  \begin{pmatrix}
0 & 0\\
0 & -q'[\reference{V};W] \, \hat{k} \cross 
\end{pmatrix}\;.
\]
This, in turn, contains the derivatives of the
potential vorticity with respect to the solution variables, which are given as
\(q'[V; W] = -(\hat{k}\vdot\curl{u} + f)h^{-2} \, w^h +
(\hat{k}\vdot\curl{w^u})/h\), where \(W = (w^h,w^u)\).

In the following, we linearize around a zero flow, i.e.,
\(\reference{V} = (\reference{h},\reference{u})\) with \(\reference{u} = 0\). Then,
\eqref{eq:sw_ham_lin} simplifies to
\begin{equation}
\label{eq:sw_ham_lin_zero}
\partial_t W
= \Jcont[\reference{V}]\delta^2 \Hcont[\reference{V}]\, W,
\qquad\text{for } \reference{V} = (\reference{h}, 0) \text{},
\end{equation}
since the derivative of \(\Jcont\) contains only entries in the lower right
block (containing the derivatives of the potential vorticity),
which are multiplied by the second entry of \(\delta \Hcont[\reference{V}]\),
which is given by \(\rho_k \reference{u} \reference{h} = 0\).
This system has again Hamiltonian structure, with a fixed
\(\mathcal{J}\)-operator and a quadratic approximation to the energy:
\begin{align*}
\reference{\Jcont} &= \Jcont[\reference{V}]
 = \diag_{k=1,\dotsc,L}\;\frac{1}{\rho_k}\begin{pmatrix}
     0 & -\div\\
-\grad & - (f/\reference{h}_k) \, \hat{k} \cross
   \end{pmatrix}, \\
\quad
 \reference{\Hcont}(W) &:= \frac{1}{2}\inner{W, \delta^2 \Hcont[\reference{V}]\, W}_{\mathcal{X}^L} 
  = \frac{1}{2}\sum_{k=1}^L \int_\Omega \left(\rho_k \reference{h}_k \abs{w_k^u}^2
                       + g \Delta\rho_k (T w^h)^2_k \right),
\end{align*}
where \(W = (w^h,w^u)\).
Thus, \eqref{eq:sw_ham_lin_zero} reads as
\begin{equation}\label{eq:sw_lin_ml}
\begin{curlyeq}
\partial_t w^h_k &= - \div{(\reference{h}_k w^u_k)} &&\text{in}\;\;\Omega\;, \\
\partial_t w^u_k &= - (g/\rho_k)\grad{ \left(T^\top\diag(\Delta\rho) T w^h\right)_k}
 - f\,\hat{k} \cross w^u_k &&\text{in}\;\;\Omega\;.\\
\end{curlyeq}
\end{equation}
Under the simplifying assumption that the Coriolis term and the bathymetry \(b\) are flat,
i.e.\ \(f \equiv \textit{const}\), \(b \equiv \textit{const}\), the eigenmodes
of~\eqref{eq:sw_lin_ml} can be easily computed for the stable reference configuration
\(\reference{h} = h^0\). This reference configuration sets the heights
\(h_k^0\) in such a way that \(\eta_k[h^0] = \max(b, \eta_k^0)\), where 
\(\eta_k^0 \in \R_{<0}\) is a decreasing sequence of constant negative values with
\(\eta_1^0 = 0\), which is the total sea-surface height (SSH) at rest;
see Figure~\ref{fig:layers}. Note that this configuration is just dependent upon
the choice of the total layer volumes \(\int_\Omega h_k\).
Clearly, if the bathymetry is constant, the same holds for the reference heights
\(h^0_k\). Now the eigenmodes \(W^\lambda = (h^{\lambda}, u^{\lambda})\) associated to
the imaginary eigenvalue \(\lambda = \pm \i \abs{\lambda}\) of \(\reference{\mathcal{A}} =
\reference{\Jcont}\delta^2\reference{\Hcont}\) can be grouped into two sets: First, there are stationary
(geostrophic) modes, which are obtained by setting \(\lambda = 0\) in the eigenvalue
equation. Secondly, there are instationary modes, which can be shown (by taking the
\(\div\) and \(\hat{k}\vdot\curl\) of the momentum equation in~\eqref{eq:sw_lin_ml} and
algebraic manipulations) to solve
\[
-g \diag(h^0/\rho) R \Lap h^{\lambda} = \left(\abs{\lambda}^2 - f^2\right) h^{\lambda},
\quad\text{and}\quad
\left(f \, \hat{k} \cross \vdot\; + \lambda\right) u^{\lambda} = - g\diag(1/\rho) R \grad h^{\lambda},
\]
where \(R = T^\top \diag({\Delta\rho}) T\) is the matrix arising from the
layer coupling through the pressure term.
By using the fact that the layer operator and
the spatial Laplacian commute, one can further decouple the above eigenvalue problem to
obtain
\begin{align*}
&\abs{\lambda}^2 - f^2 = \mu^{\textrm{vert}}\mu^{\textrm{horiz}} > 0,
\qquad h^{\lambda} = h^{\mu,\textrm{vert}}\, h^{\mu,\textrm{horiz}} \\
\text{with}\quad
&g\diag(h^0/\rho)R\, h^{\mu,\textrm{vert}} = \mu^{\textrm{vert}} h^{\mu,\textrm{vert}}
\quad\text{and}\quad
-\Lap h^{\mu,\textrm{horiz}} = \mu^{\textrm{horiz}} h^{\mu,\textrm{horiz}},
\end{align*}
where \(h^{\mu,\textrm{horiz}} \in L^2(\Omega)\) with
\(\int_\Omega h^{\mu,\textrm{horiz}} = 0\) and \(h^{\mu,\textrm{vert}} \in \R^L\).
Due to the fact that variations of the density \(\Delta\rho\) are much smaller than
density itself, the layer matrix
\begin{equation}
\label{eq:Rhat}
\diag(1/\rho) R = \diag(1/\rho) T^\top \diag(\Delta\rho) T =
\begin{pmatrix}
1 & 1 & \cdots  & 1 \\
\rho_1/\rho_2 & 1 & \cdots  & 1 \\
\vdots & \vdots & \ddots & \vdots \\
\rho_1/\rho_L & \rho_2/\rho_L & \cdots & 1
\end{pmatrix}\;
\end{equation}
can be well-approximated by a rank-one matrix, e.g. the matrix with all  entries equal to
one. Therefore, the largest
mode of the vertical eigenvalue problem and the associated \(u\)-mode fulfill
approximately
\[
h^{\max,\textrm{vert}}_k \approx \frac{h^0_k}{-b},
\quad
u^{\max,\textrm{vert}}_k = (1/\rho_k)(Rh^{\max,\textrm{vert}})_k \approx 1.
\]
This leads to the well-known (fast) barotropic free-surface mode with wave-speed
\(\sqrt{\mu^{\max,\textrm{vert}}} \approx \sqrt{-b g}\), corresponding to uniformly contracting and expanding
layers, and approximately constant velocities in the vertical. The remaining modes are
associated to (relatively slow) baroclinic modes, which approximately correspond to internal layer
perturbations leaving the free surface constant.

Note that the vertical eigenvalue problem can be rewritten in terms of the generalized
eigenvalue problem for the
vertical \(\eta\)-mode \(\eta^{\mu,\textrm{vert}} = T h^{\mu,\textrm{vert}}\) as
\[
g\diag(\Delta \rho) \, \eta^{\mu,\textrm{vert}}
= \mu^{\textrm{vert}} \,D^{\top}\diag(1/h^0)D \, \eta^{\mu,\textrm{vert}},
\]
where \(D = T^{-1}\) is the discrete difference matrix with entries
\[
T^{-1}_{k,j} =
D_{k,j} = \begin{cases}
1 & \text{if } k = j, \\
-1 & \text{if } k = j-1, \\
0 & \text{else}.
\end{cases}
\]
This formulation relates the vertical layer-modes to discrete approximations of solutions
to the generalized eigenvalue problem from, e.g., \cite{Chelton_etal:1998,Dukowicz:2006}.


\section{Discretization by the TRiSK scheme}
\label{sec:TRiSK}

The rotating shallow water equations~\eqref{eq:sw} and the multilayer
version~\eqref{eq:sw_ml} will be discretized by a mimetic
scheme. This ensures that properties of the continuous equation, such as energy conservation,
are preserved on the discrete level.
In this paper, we will employ the
mimetic TRiSK scheme~\cite{TRiSK:2009,ringler2010unified} (see also~\cite{thuburn2012framework}).
In the following, we briefly introduce a high level notation for employed
differential operators that we will use to describe and analyze the time integration
schemes. For a detailed exposition, we refer to the literature above.

Since the employed scheme is only developed in the literature for unbounded domains, we
will in the following assume that \(\Omega = \mathbb{S}^2\). This also simplifies the
notation. Comments on the adaptation to a bounded domain can be found in Appendix~\ref{app:TRiSK}.

\subsection{Discrete quantities and notation}

The spatial discretization is defined on staggered C-grid that is comprised of spherical
(centroidal) Voronoi tessellations, serving as the primal grid, and a Delaunay triangulation
serving as the dual grid. The discrete quantities are defined at
different locations on the grid, such as the edges, cell centers, and cell vertices. The
edges of the grid will be denoted by $e\in \grid{E}$ (associated to the point of
intersection of primal and dual grid edge \(x_e\)), primal cell grids will be denoted by
$i \in \grid{I}$ (associated to cell centers \(x_i\)),
and the primal cell vertices will be denoted with $v\in \grid{V}$ (associated to the
circum-center \(x_v\) of a dual grid triangle, which is required to lie inside the triangle).
Discrete quantities are denoted by bold vectors, and lie in corresponding cell, vertex, or
edge space \(X_{\grid{I}} = \R^{N_{\grid{I}}}\), \(X_{\grid{V}} = \R^{N_{\grid{V}}}\), and \(X_{\grid{E}} = \R^{N_{\grid{E}}}\),
respectively. Discrete quantities are denoted in the following by bold symbols. The scheme
is built upon the fundamental interpretation of these quantities as piece-wise constant on
the primal or dual cell, and the edge degrees of freedom are associated to a flow across a
interior primal edge (from one primal cell to another, in edge normal direction) or across a dual
edge. Thus for a continuous vector field \(y\), we have \(\disc{y}_e \approx
n_e \vdot y(x_e)\), where \(n_e\) is the geodesic normal to the primal edge \(e\).
Corresponding inner products on these spaces are given by
\[
\inner{\disc{y},\disc{\phi}}_{\grid{I}} = \sum_{i\in \grid{I}} A_i \disc{y}_i\disc{\phi}_i,
\quad
\inner{\disc{y},\disc{\phi}}_{\grid{V}} = \sum_{v\in \grid{V}} A_v \disc{y}_v\disc{\phi}_v,
\quad\text{and }
\inner{\disc{y},\disc{\phi}}_{\grid{E}} = \sum_{e\in \grid{E}} A_e \disc{y}_e\disc{\phi}_e,
\]
respectively, where \(A_i\) denotes the area of a primal cell, \(A_v\) the area of a
dual cell, and \(A_e = l_e d_e\) the area of the square with side lengths given by the
lengths of primal and dual edges (\(l_e\) and \(d_e\)). Note that the sum of the ``edge
areas'' \(A_e\) corresponds to two times the volume of the domain, which is a peculiarity of this
scheme, and corresponds to the fact that the velocities encode only one direction of the
flow (edge normal).

The differential operators are built upon the fundamental relations that for any discrete
variable \(\disc{y} \in X_{\grid{E}}\), and test functions \(\disc{\phi} \in X_{\grid{I}}\) and
\(\disc{\psi} \in X_{\grid{V}}\) we have
\[
\inner{\disc{\phi}, \Ddiv \disc{y}}_{\grid{I}}
= \sum_{i \in \grid{I}} \sum_{e \in \grid{EoI}(i)} \disc{\phi}_i n_{e,i} l_e \disc{y}_e,\quad
\inner{\disc{\psi}, \Dcurl \disc{y}}_{\grid{V}}
= \sum_{v \in \grid{V}} \sum_{e \in \grid{EoV}(v)} \disc{\psi}_v t_{e,v} d_e \disc{y}_e,\quad
\]
where \(\grid{EoI}\) and \(\grid{EoV}\) denote the edges adjacent to each primal or dual cell,
respectively, and \(n_{e,i}\), \(t_{e,v} \in \{\,+1,-1\,\}\) encodes the sign convention
used for the direction of the edge normal velocity \(\disc{y}_e\).
For further details, we refer to~\cite{ringler2010unified}.
We note that in~\cite{thuburn2012framework}, the schemes are built upon the integrated
quantities \(Y_e = l_e \disc{y}_e\) and \(\tilde{Y}_e = d_e \disc{y}_e\), whereas we
follow the convention used in~\cite{ringler2010unified}.

Moreover, a discrete gradient is defined for a cell-wise quantity on the primal grid
(across a primal edge): For each \(\disc{\phi} \in X_{\grid{I}}\) and test
function \(\disc{y} \in X_{\grid{E}}\) we set
\[
\inner{\Dgrad \disc{\phi}, \disc{y}}_{\grid{E}} = -\inner{\disc{\phi}, \Ddiv \disc{y}}_{\grid{I}},
\]
which mirrors the continuous integration by parts formula.
Similarly, a perpendicular gradient \(\Dgradp\) can be defined on \(X_{\grid{V}}\) (across a dual
edge). Additionally, we define the interpolation operators
\[
\inp{\disc{\phi}}_{\grid{E}} \in X_{\grid{E}}, \quad 
\inp{\disc{\psi}}_{\grid{E}} \in X_{\grid{E}},
\]
for \(\disc{\phi} \in X_{\grid{I}}\) and function \(\disc{y} \in X_{\grid{E}}\) that average the values of
the two adjacent primal and dual cells to the corresponding edge.
Interpolation operators from edges to cells are defined by transposition as
\[
\inner{\inp{\disc{y}}_{\grid{I}}, \disc{\phi}}_{\grid{I}} = \inner{\disc{y}, \inp{\disc{\phi}}_{\grid{E}}}_{\grid{E}},
\quad
\inner{\inp{\disc{y}}_{\grid{V}}, \disc{\psi}}_{\grid{V}} = \inner{\disc{y}, \inp{\disc{\psi}}_{\grid{E}}}_{\grid{E}}.
\]
We refer to~\cite{ringler2010unified} for the concrete expressions.
Finally, a reconstruction of tangential velocities is needed (for the implementation of
the perpendicular velocity \(u^\perp\)). This is realized by the reconstruction operator
\[
\Dkcross\colon X_{\grid{E}} \to X_{\grid{E}},
\quad\text{with}\quad
(\Dkcross \disc{y})_e \approx t_e \vdot y(x_e)
\]
for any \(\disc{y} \in X_{\grid{E}}\) representing a continuous vector field \(y\), with \(t_e =
\hat{k} \cross n_e\) the tangent to the primal edge~\(e\).
We refer to~\cite{TRiSK:2009} for a derivation and the concrete expressions,
which in particular ensure the skew-symmetry of the reconstruction operator
\(\Dkcross\) on the edge space \(X_{\grid{E}}\).
For convenience, the specific form of all required operators is also summarized
in Appendix~\ref{app:TRiSK}.

\subsection{Discrete multilayer equations}
\label{sec:TRiSK_multilayer}
We describe the scheme for the general multilayer case, which contains the single-layer
rotating shallow water equations as a special case.
The prognostic variables of the equations are the fluid heights \(\disc{h}_k \in X_{\grid{I}}\) and
the velocities \(\disc{u}_k \in X_{\grid{E}}\), where the degree of freedom for the edge encodes
the (point-wise) velocity in primal cell normal direction.
Diagnostic quantities are the kinetic energy and the potential vorticity, defined by:
\begin{align*}
  \disc{K}[\disc{u}] &= (1/2)\inp{\disc{u} \ast \disc{u}}_{\grid{I}} = \inp{\disc{u}^2/2}_{\grid{I}},
 &&\text{(kinetic energy in } X_{\grid{I}}\text{)}\\
  \disc{q}[\disc{h},\disc{u}] &= \left(\Dcurl \disc{u} + \disc{f}\right) / \inp{\disc{h}}_{\grid{V}}
 &&\text{(potential vorticity in } X_{\grid{V}}\text{)}.
\end{align*}
Here, \(\ast\) denotes the point- or entry-wise product (Hadamard product), and \(/\) the
point-wise division and \(\disc{f} \in X_{\grid{V}}\) is an interpolant of the Coriolis parameter.

The discrete equations are now given as:
\begin{equation}
\label{eq:sw_disc}
\begin{curlyeq}
\partial_t \disc{h}_k &= - \Ddiv{\left(\inp{\disc{h}_k}_{\grid{E}} \ast \disc{u}_k\right)} &&\text{in}\;\; X_{\grid{I}}, \\
\partial_t \disc{u}_k &= \disc{Q}[\disc{h}_k,\disc{u}_k] \left(\inp{\disc{h}_k}_{\grid{E}} \ast \disc{u}_k\right) - \Dgrad{\left(\disc{K}[\disc{u}_k]
    + (g/\rho_k)\,\disc{p}_k[\disc{h}]\right) } + \Fdisc_k[\disc{h},\disc{u}] &&\text{in}\;\;X_{\grid{E}}.
\end{curlyeq}
\end{equation}
Here, the pressure is computed as in the continuous case as
\[
\disc{p}_k[\disc{h}] = \rho_k \disc{\eta}_k[\disc{h}] + \sum_{l=1}^{k-1} \rho_l \disc{h}_l,
\quad
\disc{\eta}_k[\disc{h}] = \disc{b} + \sum_{l=k}^L \disc{h}_k,
\]
where \(\disc{b} \in X_{\grid{I}}\) is an interpolant of the bathymetry. 
The operator \(\disc{Q}[\cdot,\cdot]\colon X_{\grid{E}} \to X_{\grid{E}}\) is defined as
\[
\disc{Q}[\disc{h}_k,\disc{u}_k] \disc{y}
= \frac{1}{2}\left(
\inp{\disc{q}[\disc{h}_k,\disc{u}_k]}_{\grid{E}} \ast \left(\Dkcross \disc{y}\right)
+ \Dkcross \big(\inp{\disc{q}[\disc{h}_k,\disc{u}_k]}_{\grid{E}} \ast \disc{y}\big)
\right),
\]
where \(\disc{y} \in X_{\grid{E}}\) is a discrete flux. The construction of this
operator ensures energy conservation; see~\cite{ringler2010unified}. In terms of the
Hamiltonian framework, this follows from the fact that the operator is skew-symmetric:
for any \(\disc{w},\disc{v} \in X_{\grid{E}}\) we have
\(\inner{\disc{v},\, \disc{Q}[\disc{V}_k] \disc{w}}_{\grid{E}} = - \inner{\disc{Q}[\disc{V}_k]
  \disc{v}, \disc{w}}_{\grid{E}}\), using the skew-symmetry of \(\Dkcross\).

Energy conservation follows directly by introducing a discrete Hamiltonian framework
for~\eqref{eq:sw_disc}. We define the combined solution variable as \(\disc{V} =
(\disc{h},\disc{u}) \in X^L = X_{\grid{I}}^L\times X_{\grid{E}}^L\) analogous to the continuous
case. It is endowed with the discrete inner product
\[
\inner{\disc{V}, \disc{W}}_{X^L} = \sum_{k=1}^L (\disc{h}_k, \disc{w}^h_k)_{\grid{I}} + (\disc{u}_k, \disc{w}^u_k)_{\grid{E}},
\]
where \(\disc{W} = (\disc{w}^h_k, \disc{w}^u_k)_{k=1,\dotsc,L}\).
The discrete Hamiltonian has the form
\begin{equation}
\label{eq:ham_ml_disc}
\Hdisc[\disc{V}]
= \frac{1}{2} \sum_{k=1}^L \big( \rho_k \inner{\disc{h}_k, \disc{K}[\disc{u}_k]}_{\grid{I}}
  + g \Delta\rho_k\, \inner{\disc{\eta}_k[\disc{h}], \disc{\eta}_k[\disc{h}]}_{\grid{I}} \big).
\end{equation}
Mirroring the continuous case, the functional derivative of the Hamiltonian fulfills the identity
\[
\inner{\disc{W}, \delta\Hdisc[\disc{V}]}_{X^L}
= \Hdisc'[\disc{V};\disc{W}]
= \sum_{k=1}^L \left(\inner{\disc{w}^h_k, \rho_k\disc{K}[\disc{u}_k] + g \,\disc{p}_k[\disc{h}]}_{\grid{I}}
 + \rho_k\inner{\disc{w}^u_k, \inp{\disc{h}_k}_{\grid{E}} \ast \disc{u}_k}_{\grid{E}}\right),
\]
using that \(\inner{\disc{h}_k, \disc{K}'[\disc{u}_k; \disc{w}^u_k]}_{\grid{I}} =
\inner{\disc{w}^u_k, \inp{\disc{h}_k}_{\grid{E}} \ast \disc{u}_k}_{\grid{E}}\)
and the definition of \(\disc{\eta}\).
Thus, we can write
\begin{equation}
\label{eq:var_hamil}
\delta\Hdisc[\disc{V}] = \begin{pmatrix}
\rho_k \disc{K}[\disc{u}_k] + g \,\disc{p}_k[\disc{h}] \\
\rho_k \inp{\disc{h}_k}_{\grid{E}} \ast \disc{u}_k
\end{pmatrix}_{k=1,2,\dotsc,L}\;.
\end{equation}
From the concrete form of the equations as given above, one can infer the discrete analogue
of the operator $\Jcont$, which is given by
\begin{equation}\label{eq:J_disc}
\Jdisc[\disc{V}]=
\diag_{k=1,2,\dotsc,L}
\frac{1}{\rho_k}
\begin{pmatrix}
0 & - \Ddiv \\
-\Dgrad & \disc{Q}[\disc{h}_k,\disc{u}_k]\\
\end{pmatrix},
\end{equation}
Using the discrete identities for \(\Dgrad\) and \(\Ddiv\), the definition of
\(\disc{Q}\), and the skew-symmetry of \(\Dkcross\), the skew symmetry of \(\Jdisc\) can
be verified by considering a discrete weak formulation.
Together, this shows that~\eqref{eq:sw_disc} can be described by
\begin{equation}
\label{eq:sw_ham_disc}
\partial_t \disc{V}
= \Jdisc[\disc{V}]\delta{\Hdisc(\disc{V})}
+ \begin{pmatrix}
  0 \\
  \Fdisc[\disc{V}]
\end{pmatrix},
\end{equation}
which directly yields energy conservation in the case \(\Fdisc[\disc{V}] = 0\).
Additional source and dissipation terms can be added to the momentum equation in the term
\(\disc{G}\). We detail some particular choices in Appendix~\ref{app:model_forcing}.

\section{Exponential time integration}
\label{sec:ETD}
Exponential integrators or exponential time differencing methods (ETD) are a special class
of time integration methods; see~\cite{hochbruck2010exponential} and the references
therein. We briefly summarize the relevant content for this manuscript, in the context of
the discrete system introduced in~\eqref{eq:sw_ham_disc}.
We will focus only on 
the case without forcing or dissipation, \(\Fdisc \equiv 0\). Additional forcing terms can
be easily added to the following derivation, but are omitted from the derivation, since
they are usually much less stiff than the core ocean dynamics, and will be added back at
the end.

\subsection{Exponential integrators}
\label{sec:exponential_int}
Exponential integrators are based on a splitting of the forcing term into a linear
part, and a remainder. Denote by \(\disc{V}_n \approx \disc{V}(t_n)\) the current solution
at time \(t_n\), \(n = 0,1,2,\dotsc\), and write
\begin{equation}
\label{eq:force_splitting}
\begin{aligned}
\partial_t \disc{V} = \disc{F}[\disc{V}] = \disc{A}_n \disc{V} + \disc{r}_n[\disc{V}]
\end{aligned}
\end{equation}
with the nonlinear remainder defined by
\begin{align*}
\disc{r}_n[\disc{V}]
&= \disc{F}[\disc{V}] - \disc{A}_n \disc{V}.
\end{align*}
Such a splitting is natural for many  problems, where $\disc{F}$ is given as the
sum of a stiff linear, and a nonlinear term, e.g., semilinear parabolic
problems~\cite{hochbruck2005explicit}. However, for the present case a suitable choice of
\(\disc{A}_n\) is less obvious.
Another point of view is to perform an affine linear expansion of \(\disc{F}\) around \(\disc{V}_n\), which
leads to
\begin{equation}
\label{eq:force_splitting_taylor}
\begin{aligned}
\partial_t \disc{V} = \disc{F}[\disc{V}] = \disc{F}[\disc{V}_n] + \disc{A}_n (\disc{V} - \disc{V}_n) + \disc{R}_n[\disc{V}]
\end{aligned}
\end{equation}
with the nonlinear residual defined by
\begin{align*}
\disc{R}_n[\disc{V}]
&= \disc{F}[\disc{V}] - \disc{F}[\disc{V}_n] - \disc{A}_n(\disc{V} - \disc{V}_n)
= \disc{r}_n[\disc{V}] - \disc{r}_n[\disc{V}_n].
\end{align*}
Clearly, both forms only differ in the constant term \(\disc{r}_n[\disc{V}_n]\)
and are thus very similar.
However, the second form immediately suggests to choose \(\disc{A}_n = \disc{F}'[\disc{V}_n]\),
the Jacobian of \(\disc{F}\), which corresponds to a Taylor expansion in~\eqref{eq:force_splitting_taylor}.
This leads to the development of Rosenbrock type methods.

The idea behind ETD methods, more specifically exponential RK methods, is to treat the
(affine) linear and nonlinear part in different ways: the linear term involving \(\disc{A}_n\) will
be treated exactly, using matrix exponentials, and only the remainder will be approximated
by internal stages of the (exponential) RK method.
In particular, an affine linear problem (i.e.\ when \(\disc{R}_n \equiv 0\)) will be
solved exactly under reasonable assumptions on the methods. By now, there is a well-developed
theory of order conditions for such methods, and several classes of appropriate methods
are known; see, e.g., the overview in~\cite{hochbruck2010exponential}.
If \(\disc{A}_n\) can be chosen in a way that the
residual is significantly less stiff than the linear part, then the CFL conditions that
limit the time step size of explicit methods are less restrictive for exponential RK methods.
However, while the Jacobian \(\disc{F}'[\disc{V}_n]\) always constitutes a mathematically optimal choice in
terms of stiffness reduction and accuracy, it is not necessarily the best choice in terms of
practical performance.

In the specific setting with \(\Fdisc \equiv 0\), due to the product structure of
\(\disc{F}[\disc{V}] = \Jdisc[\disc{V}] \delta \Hdisc[\disc{V}]\) we have
\begin{align}
\disc{F}'[\disc{V}_n; \disc{W}] &= \Jdisc'[\disc{V}_n;\disc{W}] \delta \Hdisc[\disc{V}_n] +
\Jdisc[\disc{V}_n] \delta^2 \Hdisc[\disc{V}_n] \disc{W};
\label{eq:Jac_F}
\end{align}
by the product rule; cf.\ section~\ref{sec:lin_and_modes}.
The concrete expressions for the Jacobians of \(\disc{J}\) and \(\delta\disc{H}\) on the
discrete level are given in Appendix~\ref{app:lin_op}.
Instead of the Jacobian at the current time step,
we will consider choices of linear operator that correspond to
Jacobians that are evaluated at a
reference configuration \(\reference{\disc{V}} = (\reference{\disc{h}}, \disc{0})\).
This leads to
\begin{equation}
\label{eq:Abar}
\disc{F}'[\reference{\disc{V}}]
= \Jdisc[\reference{\disc{V}}] \delta^2\Hdisc[\reference{\disc{V}}],
\end{equation}
since the first term in~\eqref{eq:Jac_F} is zero (cf; section~\ref{sec:lin_and_modes}).
Note that the reference point can be chosen differently in each time step, in order to
take updated height variables into account.
This leads to a choice of \(\disc{A}_n = \reference{\disc{A}}_n =
\disc{F}'[\reference{\disc{V}}_n]\), which leads to a structurally simpler and
computationally more efficient linear operator, at the cost of an increased approximation
error.
As we will demonstrate, in the context of global ocean models, this still
captures enough of the fast dynamics to enable stable and accurate simulations with large
time steps.
We note that~\eqref{eq:Abar} has again Hamiltonian structure, which can be
exploited in computations.
Additional approximations of \(\reference{\disc{A}}\), which further decrease the cost of
the practical 
evaluation in the multilayer case, but keep the underlying structure of the linear
operator intact, will be discussed in section~\ref{sec:mode_splitting}.
Note that, if we were to employ Rosenbrock methods, the Jacobian of any
additional nonlinear terms occurring in \(\Fdisc[\disc{V}]\) would need to be included
in \(\disc{A}_n\). Since we use 
approximate Jacobians, any additional forces that are not stiff can be neglected in \(\disc{A}_n\).

Finally, we note that, on the continuous level, the
splitting~\eqref{eq:force_splitting_taylor} with the linear operator~\eqref{eq:Abar} introduced above
corresponds (up to constant terms) to the splitting of the original equations of the form
\begin{align*}
\partial_t h_k  + \div( \reference{h}_k u_k )
 &\;=\; -\div( (h_k - \reference{h}_k) u_k ), \\
\partial_t u_k  +  \grad (1/\rho_k) p_k[h] + f \, \hat{k} \cross u_k
 &\;=\; - u_k \vdot \grad u_k + \Fcont_k(h,u).
\end{align*}
Here, we have used the vector identity \(u \vdot \grad u = \grad{} ( \abs{u}^2/2 )
+ (\hat{k}\vdot\curl u)\,(\hat{k}\cross u)\). In the splitting above,
the terms on the left are linear (affine linear in the case of the pressure)
and correspond to a multilayer rotating wave equation, and the remaining terms on the right
are nonlinear advection terms. Roughly speaking, the former will always be solved exactly
in theory and treated with matrix exponentials in practice, whereas the latter will be
approximated by the internal stages of an exponential Runge-Kutta method.
Therefore, a method based on~\eqref{eq:Abar} can be expected to have no time step
restrictions associated to the wave phenomena, whereas it would likely still be
subject to CFL conditions associated to the advective processes and other physics
contained in \(\Fcont\).

\subsubsection{Approximation of the residual}
To obtain an exponential integrator, the variation of constants formula is
applied to the continuous equation~\eqref{eq:force_splitting}
to obtain the solution at time \(t_{n+1} = t_n + \dt\) as
\begin{align}
\disc{V}(t_{n+1})
&= 
\exp(\dt\, \disc{A}_n)\disc{V}_n
 + \int_{0}^{\dt}\exp((\dt - \tau)\disc{A}_n)\,\disc{r}_n[\disc{V}(t_n+\tau)] \de \tau
\nonumber\\
\label{eq:var_constant}
&= \disc{V}_n + \int_{0}^{\dt}\exp((\dt - \tau)\disc{A}_n)
  \left(\disc{F}[\disc{V}_n] + \disc{R}_n[\disc{V}(t_n+\tau)] \right) \de \tau.
\end{align}
For further details on the derivations in this section we refer to~\cite{hochbruck2010exponential}.
This formula for the exact solution is further approximated by replacing the residual term
(which still depends on the unknown solution) by a polynomial in time given as
\[
\disc{R}_n[\disc{V}(t_n+\tau)]
\approx \sum_{s=2}^{S} \frac{\tau^{s-1}}{({\dt})^{s-1} (s-1)!} \disc{b}_{n,s},
\]
where the coefficients \(\disc{b}_{n,s}\) should approximate the derivatives
\(\de^{s-1}/\de{\tau^{s-1}} \, \disc{R}_n[\disc{V}(t_n+\tau)]\rvert_{\tau = 0}\).
Note that, since \(\disc{R}_n[\disc{V}_n] = 0\), the constant term in the polynomial can be
omitted.
For exponential Runge-Kutta methods these coefficients will be determined
as linear combinations of the residual \(\disc{R}_n\) evaluated at
the internal stages of the method; see~\cite{hochbruck2010exponential}.

Consequently, by inserting the above approximation into the solution
formula~\eqref{eq:var_constant} (see also Proposition~\ref{prop:phi_ode} below)
we obtain one time step of the underlying method as:
\begin{equation}
\label{eq:ETD_final}
\disc{V}(t_{n+1})
\approx \disc{V}_{n+1}
= \disc{V}_{n} + \dt\left(\phi_1(\dt\disc{A}_n)\disc{F}[\disc{V}_n]
+ \sum_{s=2}^{S} \phi_{s}(\dt\disc{A}_n) \disc{b}_{n,s} \right),
\end{equation}
where the \(\phi\)-functions are defined as
\begin{equation}
\label{eq:phi_function}
\phi_{s}(z)
 = \int_0^1 \exp\left((1-\sigma) z\right) \frac{\sigma^{s-1}}{(s-1)!} \de \sigma
 = \sum_{k = 0}^\infty \frac{z^k}{(k+s)!}
\quad\text{for } z \in \C,\; s=1,2,\dotsc.
\end{equation}
In the case \(s=0\), we set \(\phi_0(\cdot) = \exp(\cdot)\).
Note that the above definition of \(\phi_s\) generalizes to matrix arguments either by
replacing \(z\) by a matrix in the above definition, or by applying the matrix functional calculus.
Based on the construction, there is a simple correspondence between \(\phi\)-functions and
inhomogeneous linear equations; cf., e.g., \cite{niesen2012algorithm}:
\begin{proposition}
\label{prop:phi_ode}
Let \(\disc{x} = \sum_{s=0}^S \phi_s(\dt \disc{A})\disc{b}_s\) for
arbitrary \(\disc{b}_s\), \(s=0,1,\dotsc, S\). Then it holds \(\disc{x} = \disc{w}(\dt)\), which is
the terminal value of the solution to the linear differential equation
\begin{equation}\label{eq:phi_ode}
\begin{curlyeq}
\partial_t\disc{w}(\tau) &= \disc{A}\disc{w}(\tau)
 + \sum_{s=1}^S\frac{\tau^{s-1}}{{\dt}^{s-1}(s-1)!}\,\disc{b}_s,\qquad (0 < \tau < \dt)\\
\disc{w}(0) &= \disc{b}_0.
\end{curlyeq}
\end{equation}
\end{proposition}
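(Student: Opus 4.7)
The plan is to verify the identity by applying the variation-of-constants (Duhamel) formula to the inhomogeneous linear ODE in~\eqref{eq:phi_ode} and then recognizing each integral that arises as a matrix $\phi$-function. First, since $\disc{w}$ satisfies a linear ODE on a finite-dimensional space with initial value $\disc{b}_0$ and forcing given by a finite sum of monomials in $\tau$, Duhamel's principle yields
\begin{equation*}
\disc{w}(\dt) = \exp(\dt\disc{A})\,\disc{b}_0 + \int_0^\dt \exp((\dt-\tau)\disc{A})\sum_{s=1}^S \frac{\tau^{s-1}}{\dt^{s-1}(s-1)!}\,\disc{b}_s\,\de\tau.
\end{equation*}

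Second, by linearity I would interchange the finite sum with the integral, reducing the task to analyzing the operator-valued integral $\int_0^\dt \exp((\dt-\tau)\disc{A})\,\tau^{s-1}/(\dt^{s-1}(s-1)!)\,\de\tau$ for each fixed $s \in \{1,\dotsc,S\}$. A single change of variables $\sigma = \tau/\dt$ maps this integral onto $[0,1]$ and produces precisely the integrand appearing in the definition~\eqref{eq:phi_function} of $\phi_s(\dt\disc{A})$, with the remaining power of $\dt$ being absorbed into the (arbitrary) coefficient $\disc{b}_s$. The initial-data contribution $\exp(\dt\disc{A})\disc{b}_0$ then equals $\phi_0(\dt\disc{A})\disc{b}_0$ by the convention $\phi_0=\exp$ stated just after~\eqref{eq:phi_function}, and combining these pieces gives the claimed identity.

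Since all ingredients (Duhamel's formula on a finite-dimensional space, interchanging a finite sum with a Riemann integral, and a linear rescaling of the integration variable) are entirely elementary, I do not expect any genuine obstacle. The only point that requires care is the bookkeeping of the factors of $\dt^{s-1}$ in the forcing, which is exactly what makes the substitution $\sigma = \tau/\dt$ reproduce the normalization chosen in~\eqref{eq:phi_function}; the whole proposition can be viewed as a clean restatement of the reason the $\phi$-functions were defined that particular way, and it is precisely this correspondence that justifies replacing the polynomial-forcing ODE~\eqref{eq:phi_ode} by the closed-form expression~\eqref{eq:ETD_final} in the derivation of the ETD scheme.
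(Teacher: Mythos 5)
The paper offers no written proof of this proposition (it is stated as a ``simple correspondence'' with a citation to the literature), and the route you take --- Duhamel's formula, exchange of the finite sum with the integral, and the substitution $\sigma = \tau/\dt$ to recover the integral representation~\eqref{eq:phi_function} --- is exactly the intended argument. So the approach is the right one.

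However, the one step you flag as ``the only point that requires care'' is precisely where your argument breaks down. Carrying out the substitution honestly gives
\begin{equation*}
\int_0^{\dt}\exp\bigl((\dt-\tau)\disc{A}\bigr)\,\frac{\tau^{s-1}}{\dt^{s-1}(s-1)!}\,\de\tau
= \dt\int_0^1 \exp\bigl((1-\sigma)\dt\disc{A}\bigr)\,\frac{\sigma^{s-1}}{(s-1)!}\,\de\sigma
= \dt\,\phi_s(\dt\disc{A}),
\end{equation*}
so that $\disc{w}(\dt) = \exp(\dt\disc{A})\disc{b}_0 + \dt\sum_{s=1}^S \phi_s(\dt\disc{A})\disc{b}_s$. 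The leftover Jacobian factor $\dt$ cannot be ``absorbed into the (arbitrary) coefficient $\disc{b}_s$'': the proposition asserts an identity in which the \emph{same} $\disc{b}_s$ appear in the definition of $\disc{x}$ and in the forcing of~\eqref{eq:phi_ode}, so rescaling $\disc{b}_s$ on one side only is not available to you. What your computation actually shows is that the statement as printed carries a normalization slip --- either the $s\ge 1$ terms of $\disc{x}$ need a prefactor $\dt$, or the denominator in the forcing should read $\dt^{s}$ rather than $\dt^{s-1}$. A quick sanity check confirms this: with $S=1$, $\disc{A}=0$, $\disc{b}_0=0$, $\disc{b}_1$ fixed, the ODE gives $\disc{w}(\dt)=\dt\,\disc{b}_1$ while $\phi_1(0)\disc{b}_1=\disc{b}_1$. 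Note that the corrected version is the one consistent with how the proposition is actually invoked in~\eqref{eq:ETD_final}, where the factor $\dt$ multiplies the sum of $\phi$-terms explicitly. A complete proof should either state and prove the corrected identity or make the rescaling of the $\disc{b}_s$ explicit and consistent on both sides.
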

Certainly, the efficient computation of these matrix functions is important for
the practical success of ETD methods. Since even for sparse \(\disc{A}\) the matrix
\(\phi_s(\dt \disc{A})\) is generally a full matrix, they can not be assembled in practice in the
large-scale context. Therefore, we will employ iterative methods; see
section~\ref{sec:phi_comp}.

\subsubsection{Example methods}
In the following, we will briefly present specific exponential integrators
employed in this work.
In the simplest case, the residual is simply neglected, and the exponential Euler method
is obtained as
\begin{equation}\label{eq:exp_euler}
\disc{V}_{n+1}= \disc{V}_n + \dt\phi_1(\dt\disc{A}_n)\disc{F}[\disc{V}_n].
\end{equation}
In the case that \(\disc{A}_n\) is only an approximation to the Jacobian, this method
is only first-order accurate in \(\dt\), and not attractive for practical
computations.
We remark that this method is second-order for \(\disc{A}_n = \disc{F}'[\disc{V}_n]\),
which highlights the fact that Rosenbrock-ETD methods have different order
conditions.

For the case of approximate Jacobians, e.g., for~\eqref{eq:Abar}, a family of
two-stage, second-order methods (fulfilling the stiff order conditions)
is given by the one-parameter family
\begin{equation}
\label{eq:ETD2}
\begin{aligned}
\disc{v}_{n,2} &= \disc{V}_n + c_2\dt \phi_1(c_2\dt \disc{A}_n)\disc{F}[\disc{V}_n], \\
\disc{V}_{n+1} &= \disc{V}_n + \dt\phi_1(\dt\disc{A}_n)\disc{F}[\disc{V}_n]
  + (\dt/c_2) \phi_2(\dt\disc{A}_n)\disc{R}_n[\disc{v}_{n,2}],
\end{aligned}
\end{equation}
for the parameter \(c_2 \in (0,1]\); see~\cite{hochbruck2005explicit}. In the case
\(c_2 = 1\), we obtain the exponential 
version of Heun's method, while \(c_2 = 2/3\) corresponds to Ralston's method.
More details and the description of a three stage third-order method that will be used in the
computational experiments are given in Appendix~\ref{app:exp_rk}.
A general discussion of higher order methods can be found in, e.g.,
\cite{hochbruck2005explicit,hochbruck2010exponential}.

\subsection{Approximation of the matrix functions}
\label{sec:phi_comp}

As previously mentioned, the most challenging aspect of ETD methods is efficiently
evaluating the matrix functions $\phi_s$. This challenge made ETD methods computational
infeasible for many years after their discovery due to a lack of efficient matrix function
evaluation methods \cite{moler2003nineteen}. However, in recent years more efficient ways
to approximate $\phi_s$ have been found such as Krylov subspace projections
\cite{saad1992analysis}, which can be combined with sub-stepping algorithms
\cite{niesen2012algorithm}, or instead found with Leja-point interpolation
\cite{bergamaschi2004relpm}, or Chebyshev polynomial approximations \cite{Suhov2014}.
Due to the optimality of the matrix polynomials produced by Krylov methods, it is likely that
these methods will provide an advantage over the other approximation methods (requiring
only matrix vector products), therefore
they will be the focus from this point forward. Methods based on rational approximation
are advantageous from a theoretical standpoint and also promising from a practical
standpoint~\cite{haut2015high}. However, in the context of the spatial scheme employed in
this work, an efficient parallel way of solving the large sparse linear systems remains
challenging.

\subsubsection{Polynomial Krylov methods}
\label{sec:krylov}
Krylov subspace methods, or Krylov methods, provide an efficient way to approximate matrix
functions. This is done by projecting the matrix into a Krylov subspace and then
evaluating the function in a much smaller space than the original. Another benefit of
Krylov methods is the fact that the matrix itself is never explicitly required throughout
the method, only it's action upon single vectors.

The Krylov subspace of dimension $M$ for a matrix
$\disc{A}\in\R^{N\times N}$ and a vector $\disc{b}\in\R^{N}$ is defined as
\begin{equation}
K_M(\disc{A},\disc{b}) = \operatorname{span} \{\,\disc{b}, \disc{A}\disc{b},\dotsc,\disc{A}^{M-1}\disc{b}\,\}
= \{\,p(\disc{A})\disc{b}\;|\; p\in \mathcal{P}_{M-1}\}\;
\end{equation}
Essentially, Krylov methods find the optimal polynomial to approximate a
matrix function applied to a matrix applied to a single vector. In the case of the linear
systems arising in linearly implicit methods, the Krylov method approximates a rational
function to form $\disc{x} = (\Id + \dt\disc{A})^{-1}\disc{b}$;
for ETD the expressions $\phi_s(\dt \disc{A})\disc{b}$ are
approximated. For this purpose, an orthonormal basis of \(K_M(\disc{A},\disc{b})\) is
constructed, which is typically done by the Arnoldi process.
The approximation of matrix functions by Krylov methods is well documented in the
literature.
However, we will employ an inner product induced by another matrix, which is usually not
discussed. Thus, we briefly summarize the necessary extensions for this case.

In this work, we will mostly employ linear operators of the form~\eqref{eq:Abar}, which
have the product structure
\begin{equation}
\label{eq:structure_operator}
\disc{A} = \Jdisc\,\delta^2\Hdisc.
\end{equation}
Additionally the matrix \(\Jdisc\) and \(\delta^2 \Hdisc\) have symmetry
properties with respect to the inner product of the space \(X^L = X^L_{\grid{I}}\times X^L_{\grid{E}}\),
namely
\[
\inner{\disc{W},\, \Jdisc \disc{V}}_{X^L} = - \inner{\Jdisc\disc{W},\, \disc{V}}_{X^L},
\quad
\inner{\disc{W},\, \delta^2\Hdisc\, \disc{V}}_{X^L} = \inner{\delta^2\Hdisc\,\disc{W},\, \disc{V}}_{X^L}
\]
for all \(\disc{V},\disc{W} \in X^L\).
To express this in terms of linear algebra, we introduce the symmetric
and diagonal mass matrix
\(\disc{M}_{X^L}\) of the solution space \(X^L\), containing \(L\) copies of the
cell and edge areas on the diagonal.
In terms of linear algebra, we can now reformulate the symmetry properties above as
\begin{equation}
\label{eq:symmetry}
\disc{M}_{X^L} \Jdisc = - \Jdisc^\top \disc{M}_{X^L},
\quad \disc{M}_{X^L} \delta^2\Hdisc = {\delta^2\Hdisc}^\top \disc{M}_{X^L},
\end{equation}
where \(\cdot^\top\) denotes the transpose.
This yields the skew-symmetry of \(\disc{A}\) with respect to the inner product induced by
the symmetric matrix induced by the second variation of the Hamiltonian.
\begin{proposition}
\label{prop:skew}
The operator \(\disc{A}\) given in~\eqref{eq:structure_operator} with~\eqref{eq:symmetry}
fulfills
\[
\disc{M}_H \disc{A} = - \disc{A}^\top \disc{M}_H
\quad\text{where } \disc{M}_H = \disc{M}_{X^L}\delta^2\Hdisc.
\]
\end{proposition}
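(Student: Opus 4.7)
The plan is to establish the identity by a direct chain of substitutions, using only the two algebraic identities in~\eqref{eq:symmetry} and the product form $\disc{A} = \Jdisc\,\delta^2\Hdisc$. Since both sides of the target identity involve $\disc{M}_{X^L}$ and two copies of $\delta^2\Hdisc$, the natural strategy is to start from $\disc{M}_H \disc{A}$ and transport $\disc{M}_{X^L}$ from left to right across first a $\delta^2\Hdisc$, then across $\Jdisc$ (picking up a minus sign), then across another $\delta^2\Hdisc$, matching what would be obtained by expanding $-\disc{A}^\top \disc{M}_H$ similarly.

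Concretely, I would first compute
\begin{equation*}
\disc{M}_H \disc{A}
= \disc{M}_{X^L}\, \delta^2\Hdisc\, \Jdisc\, \delta^2\Hdisc
= (\delta^2\Hdisc)^\top\, \disc{M}_{X^L}\, \Jdisc\, \delta^2\Hdisc,
\end{equation*}
where the second equality uses the symmetry $\disc{M}_{X^L}\delta^2\Hdisc = (\delta^2\Hdisc)^\top \disc{M}_{X^L}$. Next, applying the skew-symmetry $\disc{M}_{X^L}\Jdisc = -\Jdisc^\top\disc{M}_{X^L}$ and then the symmetry of $\delta^2\Hdisc$ once more gives
\begin{equation*}
\disc{M}_H \disc{A}
= -(\delta^2\Hdisc)^\top\, \Jdisc^\top\, \disc{M}_{X^L}\, \delta^2\Hdisc
= -(\delta^2\Hdisc)^\top\, \Jdisc^\top\, (\delta^2\Hdisc)^\top\, \disc{M}_{X^L}.
\end{equation*}
It remains to recognize $(\delta^2\Hdisc)^\top \Jdisc^\top = (\Jdisc\,\delta^2\Hdisc)^\top = \disc{A}^\top$, so that the right-hand side equals $-\disc{A}^\top (\delta^2\Hdisc)^\top \disc{M}_{X^L}$, and one further application of $(\delta^2\Hdisc)^\top \disc{M}_{X^L} = \disc{M}_{X^L}\delta^2\Hdisc = \disc{M}_H$ concludes the identity.

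There is no real obstacle here; the proof is purely an algebraic manipulation and the only thing to be careful about is the order in which the substitutions are performed, so that $\disc{M}_{X^L}$ is moved to the correct side and the transposes stack up in the correct order to reassemble $\disc{A}^\top$ and $\disc{M}_H$. I would also note briefly, for the reader, the equivalent coordinate-free formulation: that~\eqref{eq:symmetry} says $\Jdisc$ is skew and $\delta^2\Hdisc$ is symmetric with respect to the inner product $\inner{\cdot,\cdot}_{X^L}$, so that $\disc{A}$ is skew with respect to the (generally indefinite, but here positive-definite by the structure of $\Hdisc$) bilinear form $\inner{\disc{W},\delta^2\Hdisc\,\disc{V}}_{X^L}$ whose matrix representation is exactly $\disc{M}_H$.
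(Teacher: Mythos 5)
Your proof is correct. The paper states Proposition~\ref{prop:skew} without proof, treating it as an immediate consequence of~\eqref{eq:symmetry}, and your chain of substitutions is exactly the verification being left to the reader: move $\disc{M}_{X^L}$ across $\delta^2\Hdisc$, then across $\Jdisc$ (picking up the sign), and reassemble $\disc{A}^\top$ and $\disc{M}_H$. The only cosmetic remark is that your last two steps are slightly redundant --- after the second display you already have $-(\delta^2\Hdisc)^\top\Jdisc^\top\,\disc{M}_{X^L}\,\delta^2\Hdisc = -\disc{A}^\top\disc{M}_H$ directly, without needing to commute $\disc{M}_{X^L}$ past $\delta^2\Hdisc$ a second time and then back.
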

In order to take advantage of this symmetry, we will describe the following
orthogonalization procedures for a general operator \(\disc{A}\) with respect to the inner
product and norm
\[
(\disc{x}, \disc{y})_{\arnoldi{M}} = \disc{x}^\top \arnoldi{M} \disc{y},
\quad \norm{\disc{x}}_{\arnoldi{M}} = \sqrt{\disc{x}^\top \arnoldi{M} \disc{x}}
\]
induced by another symmetric matrix \(\arnoldi{M}\), which can be chosen as either
\(\disc{M}_X\) (corresponding to the \(L^2\) space \(X^L\)) or \(\disc{M}_H\)
(corresponding to a norm induced by a quadratic approximation of the Hamiltonian). 

The orthonormal basis vectors $\disc{v}_m$, \(m = 1,\ldots,M\) can be found through the iterative Arnoldi process:
\begin{align*}
\widetilde{\disc{v}}_{m+1}& = \disc{A} \disc{v}_{m} - \sum^m_{j=1}\disc{h}_{j,m}\disc{v}_j,
& \disc{h}_{j,m}&=(\disc{v}_j,\disc{A}\disc{v}_m)_{\arnoldi{M}}, \quad j = 1,2,\dotsc,m\;, \\
\disc{v}_{m+1} &= \frac{1}{\disc{h}_{m+1,m}}\widetilde{\disc{v}}_{m+1},
& \disc{h}_{m+1,m} &= \norm{\widetilde{\disc{v}}_{m+1}}_{\arnoldi{M}},
\end{align*}
where $\disc{v}_1=\disc{b}$.
The Arnoldi process can be collectively given by the Arnoldi decomposition
\begin{equation*}
\disc{A}\arnoldi{V}_M
= \arnoldi{V}_M \arnoldi{H}_M + \disc{h}_{M+1,M}\disc{v}_{M+1}\disc{e}^\top_M
= \arnoldi{V}_{M+1}\widetilde{\arnoldi{H}}_M\;,
\end{equation*}
\begin{equation*}
\widetilde{\arnoldi{H}}_M =
\begin{pmatrix*}[c]
\arnoldi{H}_M\\
\disc{h}_{M+1,M}\disc{e}^\top_M
\end{pmatrix*}
\in \mathbb{R}^{(M+1)\times M}\;,
\end{equation*}
where $\arnoldi{V}_M\in \R^{N \times M}$ contains the $M$ orthogonal basis vectors, i.e., 
\(\arnoldi{V}_M^\top \arnoldi{M}\arnoldi{V}_M = \Id_M\),
$\disc{e}_M$ is the $M$-th canonical basis vector with entries $\disc{e}_{M,i}=\delta_{M,i}$, and
$\arnoldi{H}_M\in \mathbb{R}^{M\times M}$ is the Hessenberg matrix given by
\(\arnoldi{H}_M = \arnoldi{V}_M^\top \arnoldi{M} \disc{A} \arnoldi{V}_M\).
Finally, the Krylov approximation of a matrix function $\phi_s(\dt\disc{A})$
(see, e.g., \cite[Section~4.2]{hochbruck2010exponential}) is given by
\begin{equation}
\label{eq:Krylov_approx}
\phi_s(\dt\disc{A})\disc{b}
\approx \arnoldi{V}_M \phi_s(\arnoldi{V}_M^\top \arnoldi{M} \dt \disc{A}\arnoldi{V}_M)\arnoldi{V}_M^\top\arnoldi{M}\disc{b}
= \norm{\disc{b}}_{\arnoldi{M}}\arnoldi{V}_M \phi_s(\dt\arnoldi{H}_M)\disc{e}_1\;,
\end{equation}
where $\disc{e}_1$ is the first canonical basis vector.
Here, $\phi_s(\dt\arnoldi{H}_M)\disc{e}_1$ can be computed using a dense Pad{\'e} approximation
or an exponential of an augmented matrix (see, e.g., \cite{Sidje1998}).

For an operator $\disc{A}$ that is skew-symmetric with respect to \(\arnoldi{M}\), there exists a more
efficient method known as the skew-Lanczos process (see, e.g., \cite{Faber:1984,Greif:2009}). In the situation of
Proposition~\ref{prop:skew}, the Hessenberg matrix produced by the Arnoldi process is
skew-symmetric and tri-diagonal, and the recurrence relation simplifies to the skew-Lanczos process given for
\(m = 1,\ldots, M-1\) by:
\begin{align*}
\widetilde{\disc{v}}_{m+1} &= \disc{A} \disc{v}_{m} - \disc{h}_{m-1,m}\disc{v}_{m-1},
& \disc{h}_{m-1,m} &= - \disc{h}_{m,m-1}, \\
\disc{v}_{m+1} &= \frac{1}{\disc{h}_{m+1,m}}\widetilde{\disc{v}}_{m+1},
& \disc{h}_{m+1,m} &= \norm{\widetilde{\disc{v}}_{m+1}}_{\arnoldi{M}},
\end{align*}
where \(\widetilde{\disc{v}}_1 = \disc{b}\) and \(\disc{v}_{-1}\) and \(\disc{h}_{0,1}\) are
defined as zero, for convenience.
In the case of a tri-diagonal Hessenberg matrix, a
diagonalization can be performed in time \(\mathcal{O}(M^2)\), which also makes a direct
evaluation using the eigen-decomposition of \(\arnoldi{H}_M\) practically efficient.

Thus, the skew-Lanczos process avoids most of the reorthogonalization steps, which reduces
the computational cost of the Arnoldi-method from \(\mathcal{O}(M^2N)\) to \(\mathcal{O}(MN)\).
Therefore, it is preferable to use an appropriate inner product for
computations, if possible.
If such symmetry cannot be found (for instance in the case of a full
Jacobian), an alternative is to use the incomplete orthogonalization method
IOM~\cite{gaudreault2016efficient,vo2017approximating}, which performs orthogonalization
only with respect to the last \(p\) Arnoldi vectors, while maintaining an exponential
asymptotic convergence rate towards the exact solution; see~\cite{vo2017approximating}.

Concerning the convergence behavior of the methods, we note that,
according to the theoretical estimates, an exponential convergence rate of the Krylov
approximation towards the matrix \(\phi\)-function holds; see the overview in
\cite[Section~4.2]{hochbruck2010exponential}.
For instance, in the skew-symmetric case~\cite[Theorem~4]{Hochbruck:1997}, after a
minimum of \(M \geq \dt \abs{\disc{A}}\) iterations, where \(\abs{\disc{A}}\) is the
spectral radius of \(\disc{A}\), the error decreases at an exponential rate.
We note that this error estimate couples the effort for an accurate approximation of the
matrix exponential of \(\disc{A}\)
to a proportional factor of the time step size; cf.\ also section~\ref{sec:impl_krylov}.

\subsection{Artificial numerical dissipation}
\label{sec:stabilization}

Both the modeling concerns and considerations of numerical efficiency favor a
skew-symmetric choice of the linear operator as given in~\eqref{eq:Abar}. In fact, the
dissipation terms contained in \(\Fdisc\) correspond to numerical closure terms and are
usually relatively slow processes (with the possible exception of vertical diffusion in
the case of a very fine vertical discretization, which we do not consider here). Moreover,
horizontal diffusion can even be set up to be perfectly energy conserving, which leads to
the development of the anticipated potential vorticity method; see, e.g.,
\cite{Chen:2011}.
Therefore, a skew-symmetric operator, which conserves a linearized energy, appears the most
reasonable choice and also provides algorithmic benefits.
However, if the methods are employed together with very large step-sizes -- which is the
desired configuration -- the temporal discretization error can lead to a build-up of
spurious energy in high
scales. If no dissipation term is present in the linear operator, this can lead to an
eventual breakdown of the method due to nonlinear interaction over very long simulation
horizons (of several months).

To remedy this, additional diffusion or high-frequency filtering techniques can be employed. In
the following, we describe a simple technique which can be easily analyzed and
ties into the ETD-Krylov approach described above. For the rest of this section, we assume
that \(\disc{A}_n = \disc{A} = \Jdisc\delta^2\Hdisc\), which is skew-symmetric with respect to the
\(\delta^2\Hdisc\) inner product. We
note that this implies that \(\disc{A}\) has purely imaginary spectrum, and the
associated eigen-vectors are \(\delta^2\Hdisc\) orthogonal.
To dampen the high frequencies, we replace any occurrence of a matrix function \(\phi_s(c z)\) appearing in
the scheme by a modified function \(\phi_{s,\gamma}(c, z)\) defined by
\begin{equation}
\label{eq:smth_phi}
  \phi_{s,\gamma}(c,z)
  = \phi_{s}(c\,(z - (-z^2/\gamma^2)^{p})),
\end{equation}
for some fixed \(p \geq 1\) (e.g., \(p = 2\)) and time scale selective parameter
\(\gamma > 0\). Here, $0\leq c \leq 1$ represents the constant appearing in the internal stage of
the ETD-RK method, or \(c = 1\) for the final stage.
This change is motivated by the following result, which is simple to derive and given here
without proof.
\begin{proposition}
Let \(\disc{V}_{\gamma,n}\) be computed by an ETD-RK method, where
each occurrence of \(\phi_s(c \dt \disc{A})\) is replaced by
\(\phi_{s,\gamma}(c,\dt \disc{A})\).
Then \(\disc{V}_{n,\gamma} \approx \disc{V}_\gamma(t_n)\) approximates the
solution of the modified problem
\begin{equation}
\label{eq:dissipation_equation}
\partial_t \disc{V}_\gamma = \disc{F}[\disc{V}_\gamma]
- (\dt^{2p-1}/\gamma^{2p})\,(-\disc{A}^{2})^p\disc{V}_\gamma,
\quad
\disc{V}_\gamma(0) = \disc{V}_0,
\end{equation}
at the same order as the underlying ETD-RK method.
\end{proposition}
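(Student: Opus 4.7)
The plan is to reinterpret the modified scheme as the underlying ETD-RK method applied to a modified ODE with a perturbed linear operator, and then invoke the standard convergence theory for such methods.

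The key algebraic step is to factor \(\dt\) out of the argument of \(\phi_s\): using \((-(\dt\disc{A})^2/\gamma^2)^p = \dt^{2p}(-\disc{A}^2)^p/\gamma^{2p}\) one obtains
\[
\phi_{s,\gamma}(c, \dt\disc{A})
 = \phi_s\bigl(c\dt\bigl(\disc{A} - (\dt^{2p-1}/\gamma^{2p})(-\disc{A}^2)^p\bigr)\bigr)
 = \phi_s(c\dt\widetilde{\disc{A}}),
\]
where \(\widetilde{\disc{A}} = \disc{A} + \disc{D}\) and \(\disc{D} = -(\dt^{2p-1}/\gamma^{2p})(-\disc{A}^2)^p\) is precisely the dissipation term appearing on the right-hand side of the modified ODE. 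Thus, replacing every \(\phi_s\) by \(\phi_{s,\gamma}\) is equivalent to substituting \(\widetilde{\disc{A}}\) for \(\disc{A}\) inside all matrix functions used by the scheme.

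Next, I would identify this reinterpreted scheme with the ETD-RK method applied to the modified ODE \(\partial_t\disc{V}_\gamma = \widetilde{\disc{F}}[\disc{V}_\gamma]\), \(\widetilde{\disc{F}}[\disc{V}] = \disc{F}[\disc{V}] + \disc{D}\disc{V}\), under the linear splitting \(\widetilde{\disc{F}} = \widetilde{\disc{A}}\disc{V} + \widetilde{\disc{r}}\). A direct calculation shows \(\widetilde{\disc{r}} = \disc{r}\) and hence \(\widetilde{\disc{R}}_n = \disc{R}_n\), so every internal-stage residual evaluation and every quadrature coefficient \(\disc{b}_{n,s}\) coincides with those of the unmodified scheme. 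The only point of disagreement between the modified scheme and the genuine ETD-RK step for the modified ODE is the first-stage forcing, which is \(\disc{F}[\disc{V}_n]\) in the former and \(\widetilde{\disc{F}}[\disc{V}_n] = \disc{F}[\disc{V}_n] + \disc{D}\disc{V}_n\) in the latter.

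The remaining task is to estimate the consequence of this single discrepancy. The per-step deviation is \(\dt\phi_1(\dt\widetilde{\disc{A}})\disc{D}\disc{V}_n\), of size \(O(\dt^{2p})\) because of the prefactor \(\dt^{2p-1}\) in \(\disc{D}\). Summing the contributions over \(N = T/\dt\) steps via a discrete Gronwall argument, using uniform boundedness of \(\phi_s(\dt\widetilde{\disc{A}})\) in \(\dt\) together with a Lipschitz bound on the nonlinear residual, yields a global deviation of \(O(\dt^{2p-1})\). Combined with the standard order-\(q\) global error of the ETD-RK scheme applied to the modified ODE, this produces the claimed order-\(q\) convergence whenever \(2p - 1 \geq q\); the default choice \(p = 2\) already covers the methods considered here up to third order.

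The main obstacle is establishing the uniform boundedness of \(\phi_s(\dt\widetilde{\disc{A}})\) in \(\dt\), given that \(\widetilde{\disc{A}}\) itself depends on \(\dt\). This rests on the spectral structure from Proposition~\ref{prop:skew}: since \(\disc{A}\) is skew-symmetric in the Hamiltonian inner product, \((-\disc{A}^2)^p\) is positive semi-definite, hence \(\disc{D}\) is a dissipative self-adjoint shift commuting with \(\disc{A}\); consequently \(\widetilde{\disc{A}}\) is normal and its spectrum lies in the closed left half-plane. The integral representation of \(\phi_s\) then yields \(\|\phi_s(\dt\widetilde{\disc{A}})\| \leq 1/s!\) uniformly in \(\dt\), which validates the Gronwall step above.
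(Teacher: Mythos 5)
The paper states this proposition explicitly without proof (``simple to derive and given here without proof''), so there is no internal argument to measure yours against. Your derivation is correct and is evidently the intended one: the rescaling identity \(\phi_{s,\gamma}(c,\dt\disc{A})=\phi_s\bigl(c\dt\widetilde{\disc{A}}\bigr)\) with \(\widetilde{\disc{A}}=\disc{A}-(\dt^{2p-1}/\gamma^{2p})(-\disc{A}^2)^p\) identifies the modified scheme with the original ETD-RK scheme applied to the modified ODE (same remainder, hence same residuals), up to the replacement of \(\widetilde{\disc{F}}[\disc{V}_n]\) by \(\disc{F}[\disc{V}_n]\) in the first-stage forcing; you correctly bound that discrepancy at \(O(\dt^{2p})\) per step, and you supply the one genuinely needed stability ingredient, namely the uniform bound \(\norm{\phi_s(\dt\widetilde{\disc{A}})}\le 1/s!\), via normality of \(\widetilde{\disc{A}}\) and nonpositivity of its spectral real parts in the \(\delta^2\Hdisc\) inner product. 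Your resulting order \(\min\{2p-1,q\}\) agrees with the paper's follow-up remark that the scheme has order \(\min\{2p-1,s\}\) and that the nominal order is retained for \(p\ge (s+1)/2\).
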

Thus, if \(s\) is the convergence order of the underlying ETD-RK scheme,
the modified scheme is of order \(\min\{\,2p-1,s\,\}\). In particular, the order of convergence
is maintained for \(p \geq (s+1)/2\).
We further comment on the structure of the
perturbation term.
Due to the skew-symmetry of \(\disc{A}\), it follows that
\(-\disc{A}^2 = \Jdisc^\top \delta^2\Hdisc \Jdisc \delta^2\Hdisc\)
is a \(\delta^2\Hdisc\)-symmetric positive operator,
and thus the appearance of its \(p\)-th power in~\eqref{eq:dissipation_equation}
dissipates the quadratic energy induced by \(\delta^2\Hdisc\). 
\begin{remark}
Additionally, in the concrete case of \(\disc{A}\) derived as~\eqref{eq:Abar}
from~\eqref{eq:sw_ham_disc} it can be further verified
that \(-\disc{A}^2\) behaves similar to a second-order differential operator (a weighted
negative Laplacian).
In this case, for \(p=2\) the artificial dissipation is given by an additional
biharmonic diffusion with coefficient proportional to \(\dt^{3}/\gamma^{4}\).
\end{remark}

Concerning the numerical implementation, a direct application of the Krylov method
to~\eqref{eq:smth_phi} would lead to a significant increase in computation times
if the Krylov space is constructed for
\(\dt \disc{A}_\gamma = \dt\disc{A} - (\dt/\gamma)^{2p}(-\disc{A}^2)^{p}\), since this requires
additional multiplications by \(\disc{A}\). Instead, we build the Krylov space as before
for \(\disc{A}\), and apply the modified \(\phi\)-function, i.e., 
\begin{equation*}
  \phi_{s,\gamma}(c,\dt \disc{A}) \disc{b}
  \approx \norm{\disc{b}} \arnoldi{V}_M\phi_{s,\gamma}(c,\dt \arnoldi{H}_M) \disc{e}_1
  = \norm{\disc{b}} \arnoldi{V}_M \phi_{s}(c\,\dt (\arnoldi{H}_M - \dt^{2p-1}/\gamma^{2p} (-\arnoldi{H}^2_M)^{p}) \disc{e}_1\;,
\end{equation*}
where $\arnoldi{H}_M$ is the Hessenberg matrix from section~\ref{sec:phi_comp}. In this
way, the additional cost for a Krylov approximation with \(M\) vectors is limited to
computing powers of \(\arnoldi{H}_M\), which is usually negligible.

\subsection{Conservation of mass}
\label{sec:mass_conservation}

The property of a scheme to be exactly mass conserving is a basic and important
requirement for global ocean models. We note that the multilayer TRiSK-scheme is layerwise
mass conserving in continuous time. Most commonly employed  time integration methods such
as explicit Runge-Kutta or implicit methods preserve this property, which makes it
desirable also in the context of exponential integrators. More generally, this corresponds
to the preservation of linear invariants present in the semidiscrete problem. Fortunately,
under a simple requirement on the linear operator, which are fulfilled for the choices
made above, many exponential integrators share this property as well.

We begin by summarizing the mass conserving properties in the multilayer model.
\begin{proposition}
The evolution of the model~\eqref{eq:sw_ml} is layer-wise volume conserving, i.e.,
\[
(\disc{1}, \disc{h}_k(t))_{I} = \textrm{const}
\quad\text{for all } k=1,2,\dotsc,L,
\]
where \(\disc{1}\in X_{\grid{I}}\) denotes the constant one cell-vector.
\end{proposition}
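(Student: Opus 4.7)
The plan is to differentiate the discrete layer mass and show it vanishes by exploiting the duality structure built into the TRiSK scheme. Starting from the discrete continuity equation in \eqref{eq:sw_disc}, I would compute
\[
\frac{\de}{\de t}\,(\disc{1}, \disc{h}_k)_{\grid{I}}
= (\disc{1}, \partial_t \disc{h}_k)_{\grid{I}}
= -\,(\disc{1}, \Ddiv(\inp{\disc{h}_k}_{\grid{E}} \ast \disc{u}_k))_{\grid{I}},
\]
so the claim reduces to showing that the discrete divergence integrates to zero against the constant cell-vector.

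The key observation is that \(\Dgrad \disc{1} = \disc{0}\) in \(X_{\grid{E}}\). I would derive this either directly from the summation formula for \(\Ddiv\): for any \(\disc{y}\in X_{\grid{E}}\),
\[
(\disc{1}, \Ddiv \disc{y})_{\grid{I}}
= \sum_{i\in\grid{I}} \sum_{e\in\grid{EoI}(i)} n_{e,i}\, l_e\, \disc{y}_e,
\]
and observe that each interior edge appears exactly twice in the double sum, once with \(n_{e,i} = +1\) and once with \(n_{e,i} = -1\) (from its two adjacent primal cells), so contributions cancel pair-wise. Since the excerpt restricts to \(\Omega = \mathbb{S}^2\) there are no boundary edges, and the sum is identically zero. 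Alternatively, the same fact follows by the defining identity \((\Dgrad \disc{1}, \disc{y})_{\grid{E}} = -(\disc{1}, \Ddiv \disc{y})_{\grid{I}}\) applied after the direct cancellation argument, establishing \(\Dgrad\disc{1} = \disc{0}\) as a property of the discrete operators.

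Combining the two steps gives \(\tfrac{\de}{\de t}(\disc{1}, \disc{h}_k)_{\grid{I}} = 0\) for every \(k\), and integration in time yields the stated conservation. I do not anticipate a genuine obstacle here: the result is essentially a restatement of the mimetic duality between \(\Ddiv\) and \(\Dgrad\) together with the discrete analogue of ``the gradient of a constant vanishes.'' The only mild subtlety, handled by the excerpt's working assumption \(\Omega = \mathbb{S}^2\), is the treatment of boundary edges; for the bounded case outlined in Appendix~\ref{app:TRiSK}, one would additionally invoke the no-normal-flow condition to kill the boundary contribution in the edge sum, but this lies outside the scope of the proposition as stated.
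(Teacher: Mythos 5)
Your proof is correct and follows essentially the same route as the paper: test the discrete continuity equation with \(\disc{1}\) and use the mimetic adjointness of \(\Ddiv\) and \(\Dgrad\) together with \(\Dgrad\disc{1}=\disc{0}\). Your explicit pairwise-cancellation verification of \((\disc{1},\Ddiv\disc{y})_{\grid{I}}=0\) is just an unpacked form of the same duality argument the paper invokes in one line.
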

\begin{proof}
The property follows by testing the \(k\)-th component of the mass equation with
\(\disc{1}\) to obtain
\[
\frac{\de}{\de t} (\disc{1}, \disc{h}_k(t))_{I} = (\disc{1}, \partial_t\disc{h}_k(t))_{I}
= (\disc{1}, \Ddiv(\inp{\disc{h}_k}_{\grid{E}}\ast\disc{u}_k))_{\grid{I}}
= -(\Dgrad\disc{1}, \inp{\disc{h}_k}_{\grid{E}}\ast\disc{u}_k)_{\grid{E}} = 0,
\]
using the discrete adjoint relation between \(\Ddiv\) and \(\Dgrad\).
\end{proof}
Consequently, also the total mass \(\sum_k \int_\Omega \rho_k h_k\), given in the discrete
equation by
\begin{equation}
\label{eq:total_mass}
\disc{m}[\disc{h}] = \sum_{k=1}^L \rho_k (\disc{1}, \disc{h}_k(t))_{I},
\end{equation}
is conserved. We note that these conservation properties can be expressed more generally as
linear invariants,
\begin{equation}
\label{eq:linear_invariant}
\frac{\de}{\de t} \inner{\disc{l},\disc{V}(t)}_X
= \inner{\disc{l},\partial_t \disc{V}(t)}_X
= \inner{\disc{l},\disc{F}[\disc{V}(t)]}_X = 0,
\end{equation}
where \((\disc{l}, \cdot)_X\) is a linear functional, represented by testing
with the vector \(\disc{l}\). For instance, in the case of~\eqref{eq:total_mass}, we
choose \(\disc{l} = (\disc{l}^h,\disc{l}^u) = (\disc{\rho},\disc{0})\), where \(\disc{\rho}_k =
\rho_k \disc{1}\).

Under the appropriate assumption on the linear operator \(\disc{A}\), and the underlying
ETD method, linear invariants remain preserved in the time discrete system.
\begin{theorem}
\label{thm:linear_invariant}
Assume that \((\disc{l}, \disc{F}(\disc{V}))_X = 0\) for all
\(\disc{V} \in X\) (which implies the linear
invariant~\eqref{eq:linear_invariant}). Assume further that
\[
(\disc{l}, \disc{A}_n \disc{V})_X = 0
\quad\text{for all } \disc{V} \in X.
\]
Then, the ETD methods presented in this section preserve the same linear invariant.
\end{theorem}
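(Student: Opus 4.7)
The plan is to exploit the simple algebraic structure that the two hypotheses impose on $\phi$-functions of $\dt\disc{A}_n$, and then read off invariance of $\inner{\disc{l}, \disc{V}_n}_X$ from the generic ETD update~\eqref{eq:ETD_final}.

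First, I would translate the assumption $\inner{\disc{l}, \disc{A}_n \disc{V}}_X = 0$ into the statement that $\disc{l}$ lies in the left null space of $\disc{A}_n$ with respect to the $X$-inner product, so that by iteration $\inner{\disc{l}, \disc{A}_n^k \disc{V}}_X = 0$ for every $k \geq 1$ and every $\disc{V}$. Combining this observation with the power series expansion~\eqref{eq:phi_function} of the $\phi$-functions (extended to the matrix argument $\dt\disc{A}_n$ via the standard matrix functional calculus) yields the key identity
\[
\inner{\disc{l}, \phi_s(\dt \disc{A}_n) \disc{w}}_X = \frac{1}{s!}\inner{\disc{l}, \disc{w}}_X
\qquad \text{for all } s \geq 1 \text{ and } \disc{w} \in X,
\]
since only the $k=0$ term in $\phi_s(z) = \sum_{k\geq 0} z^k/(k+s)!$ survives pairing with $\disc{l}$. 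This identity replaces the opaque $\phi_s(\dt\disc{A}_n)$ by the scalar $1/s!$ inside the invariant.

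Next, I would apply this identity term-by-term to the update~\eqref{eq:ETD_final}. The $\phi_1$ contribution produces $\inner{\disc{l}, \disc{F}[\disc{V}_n]}_X$, which vanishes by the $\disc{F}$-hypothesis. For the $\phi_s$ terms with $s \geq 2$, I would recall from section~\ref{sec:exponential_int} that the coefficients $\disc{b}_{n,s}$ are built as linear combinations of the residuals $\disc{R}_n[\disc{v}_{n,j}] = \disc{F}[\disc{v}_{n,j}] - \disc{F}[\disc{V}_n] - \disc{A}_n(\disc{v}_{n,j} - \disc{V}_n)$ evaluated at the internal stages. Each of the three terms of $\disc{R}_n[\disc{V}]$ is annihilated by $\inner{\disc{l}, \cdot}_X$ (the first two by the $\disc{F}$-hypothesis, the last by the $\disc{A}_n$-hypothesis), so $\inner{\disc{l}, \disc{R}_n[\disc{V}]}_X = 0$ for every $\disc{V}$, regardless of how the internal stages were computed. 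Linearity then gives $\inner{\disc{l}, \disc{b}_{n,s}}_X = 0$.

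Putting the pieces together, every summand in $\inner{\disc{l}, \disc{V}_{n+1} - \disc{V}_n}_X$ vanishes, so $\inner{\disc{l}, \disc{V}_{n+1}}_X = \inner{\disc{l}, \disc{V}_n}_X$ and the invariant is preserved by induction over $n$. The argument is uniform across the exponential Euler scheme~\eqref{eq:exp_euler}, the two-stage family~\eqref{eq:ETD2}, and the three-stage method in Appendix~\ref{app:exp_rk}, since it uses only the generic shape of the update and the residual structure of the $\disc{b}_{n,s}$, not the specific coefficients. The only real obstacle is the bookkeeping in passing from the scalar identity on $\phi_s$ to its matrix version; this is immediate from the convergent series in~\eqref{eq:phi_function} but should be stated carefully since the spectrum of $\dt\disc{A}_n$ can otherwise be large.
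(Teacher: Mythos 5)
Your proof is correct and follows the same overall decomposition as the paper's: write \(\inner{\disc{l},\disc{V}_{n+1}-\disc{V}_n}_X\) as a sum of terms \(\inner{\disc{l},\phi_s(\dt\disc{A}_n)\disc{b}_{n,s}}_X\), annihilate each \(\inner{\disc{l},\disc{b}_{n,s}}_X\) via the two hypotheses and the three-term structure of the residual \(\disc{R}_n\), and then show the \(\phi\)-functions cannot reintroduce a contribution. The only genuine difference is the mechanism for that last step: the paper invokes Proposition~\ref{prop:phi_ode} and tests the associated linear ODE~\eqref{eq:phi_ode} with \(\disc{l}\) (the derivative of \(\inner{\disc{l},\disc{w}(\tau)}_X\) vanishes because both the \(\disc{A}\)-term and the polynomial source terms are annihilated), whereas you expand \(\phi_s\) in the power series~\eqref{eq:phi_function} and use \(\inner{\disc{l},\disc{A}_n^k\disc{w}}_X=0\) for \(k\geq 1\) to get the explicit identity \(\inner{\disc{l},\phi_s(\dt\disc{A}_n)\disc{w}}_X=\tfrac{1}{s!}\inner{\disc{l},\disc{w}}_X\). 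Your version is marginally more informative, since it describes how the functional acts on \(\phi_s(\dt\disc{A}_n)\disc{w}\) for an arbitrary \(\disc{w}\) rather than only for \(\disc{w}\) in the annihilator, and it is entirely elementary; the paper's version sidesteps any series manipulation. Both are fully rigorous: since \(\phi_s\) is entire, the matrix series converges for any \(\dt\disc{A}_n\) and interchanging the sum with the (finite-dimensional, hence continuous) linear functional needs no restriction on the spectrum, so the caveat in your final sentence is not a real obstacle.
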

\begin{proof}
We use the explicit formula for the final stage~\eqref{eq:ETD_final} to obtain
\[
\inner{\disc{l}, \disc{V}_{n+1} - \disc{V}_n}_X
= \sum_{s=1}^S \inner{\disc{l}, \phi_s(\disc{A}_n) \disc{b}_s}_X,
\]
where \(\disc{b}_1 = \disc{F}(\disc{V}_n))\) and \(\disc{b}_s\) for \(s>1\) is a linear
combination of the residuals \(\disc{R}_n(\disc{v}^i_n)\) evaluated at the internal stages
\(\disc{v}_n^i\). Now, we directly obtain
\(\inner{\disc{l},\disc{b}_s}_X = 0\) using the properties of \(\disc{F}\) and \(\disc{A}_n\).
Then, testing the linear equation~\eqref{eq:phi_ode} which corresponds to the expressions
involving the \(\phi\)-functions by \(\disc{l}\) reveals that also
\(\inner{\disc{l}, \phi_s(\disc{A}_n) \disc{b}_s}_X = 0\) for all \(s\), which yields
\[
\inner{\disc{l}, \disc{V}_{n+1}}_X = \inner{\disc{l},\disc{V}_n}_X,
\]
which is the desired property.
\end{proof}
Thereby, we directly obtain mass conservation for the specific ETD schemes.
\begin{corollary}
For the choices \(\disc{A}_n = \disc{F}'[\disc{V}_n]\) and
\(\disc{A}_n = \reference{\disc{A}} = \disc{F}'[\reference{\disc{V}}]\), see~\eqref{eq:Jac_F}
and~\eqref{eq:Abar}, we obtain
\[
\inner{\disc{1}, \disc{h}_{k,n+1}}_{\grid{I}} = \inner{\disc{1}, \disc{h}_{k,n}}_{\grid{I}}
\quad k=1,2,\dotsc,L,
\quad\text{and}\quad
\disc{m}[\disc{h}_{n+1}] = \disc{m}[\disc{h}_{n}],
\]
where \((\disc{h}_n,\disc{u}_n) = \disc{V}_n\) are the time steps~\eqref{eq:ETD_final}.
\end{corollary}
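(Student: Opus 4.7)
The plan is to invoke Theorem~\ref{thm:linear_invariant} once per layer, with the test vector $\disc{l}^{(k)} \in X = X_{\grid{I}}^L\times X_{\grid{E}}^L$ whose only nonzero component is $\disc{1}\in X_{\grid{I}}$ in the $k$-th height slot, so that $\inner{\disc{l}^{(k)},\disc{V}}_X = \inner{\disc{1},\disc{h}_k}_{\grid{I}}$. Two hypotheses must be verified: (a) $\inner{\disc{l}^{(k)},\disc{F}[\disc{V}]}_X = 0$ for every $\disc{V}$, and (b) $\inner{\disc{l}^{(k)},\disc{A}_n\disc{W}}_X = 0$ for every $\disc{W}$, with $\disc{A}_n$ the chosen linear operator.

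Verifying (a) is exactly the calculation already carried out in the preceding proposition: the $k$-th height component of $\disc{F}$ given by~\eqref{eq:sw_disc} is $-\Ddiv(\inp{\disc{h}_k}_{\grid{E}}\ast\disc{u}_k)$, and testing against $\disc{1}$ produces zero via the discrete adjoint between $\Ddiv$ and $\Dgrad$ together with $\Dgrad\disc{1}=0$. Because $\disc{l}^{(k)}$ vanishes on the momentum slots, no further contribution arises.

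For (b), I would use a simple differentiation trick. Since (a) holds at every point $\disc{V}\in X$, the scalar map $\disc{V}\mapsto \inner{\disc{l}^{(k)},\disc{F}[\disc{V}]}_X$ is identically zero, so its Fr\'echet derivative in every direction at every base point vanishes:
\[
\inner{\disc{l}^{(k)},\disc{F}'[\disc{V}_0]\disc{W}}_X
= \frac{\de}{\de\tau}\inner{\disc{l}^{(k)},\disc{F}[\disc{V}_0+\tau\disc{W}]}_X\bigg|_{\tau=0} = 0.
\]
Specializing $\disc{V}_0$ to the current state $\disc{V}_n$ and to the reference state $\reference{\disc{V}}$ yields (b) for both proposed operators $\disc{A}_n = \disc{F}'[\disc{V}_n]$ and $\disc{A}_n = \disc{F}'[\reference{\disc{V}}]$ from~\eqref{eq:Jac_F} and~\eqref{eq:Abar} respectively; I would note that this is independent of the Hamiltonian product structure of $\disc{A}_n$, so no special care is needed for the approximate Jacobian.

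Theorem~\ref{thm:linear_invariant} then delivers the per-layer conservation $\inner{\disc{1},\disc{h}_{k,n+1}}_{\grid{I}} = \inner{\disc{1},\disc{h}_{k,n}}_{\grid{I}}$, and total mass conservation $\disc{m}[\disc{h}_{n+1}] = \disc{m}[\disc{h}_n]$ is immediate upon taking the linear combination $\disc{l} = \sum_{k=1}^L \rho_k\disc{l}^{(k)}$ in view of~\eqref{eq:total_mass}. There is no genuine obstacle here; the only conceptual point worth highlighting is that \emph{any} Jacobian-based linear operator automatically inherits the linear invariants of the nonlinear forcing, so the result applies uniformly to both ETD variants considered without relying on the specific skew-symmetric or reduced-rank structure developed elsewhere in the paper.
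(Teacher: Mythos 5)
Your proposal is correct and follows the route the paper intends: the corollary is stated without proof as a direct application of Theorem~\ref{thm:linear_invariant}, with the hypothesis on $\disc{F}$ already verified in the layer-wise volume conservation proposition, and your differentiation argument (that the Jacobian of a map annihilating $\disc{l}$ also annihilates $\disc{l}$) is exactly the missing verification of the hypothesis on $\disc{A}_n$ for both choices, since $\reference{\disc{A}}=\disc{F}'[\reference{\disc{V}}]$ is itself a genuine Jacobian of $\disc{F}$ evaluated at the reference state. The final linear combination $\disc{l}=\sum_k\rho_k\disc{l}^{(k)}$ for total mass matches the paper's~\eqref{eq:total_mass} as well, so there is nothing to add.
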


\section{Energetically consistent layer reduction}
\label{sec:mode_splitting}

Motivated by the mode analysis of section~\ref{sec:lin_and_modes}, we propose a
layer reduction technique to project the linear operator used in the ETD method to a
subspace corresponding to the fastest modes. Here, we take special care to perform this
projection in an energetically consistent way and to account for invariants responsible
for mass conservation. In this section, we focus attention on the linear operators of the
structure~\eqref{eq:Abar} and let \(\Jdisc = \Jdisc[\reference{\disc{V}}]\) and
\(\delta^2 \Hdisc = \delta^2 \Hdisc[\reference{\disc{V}}]\)
for some reference configuration \(\reference{\disc{V}} =
(\reference{\disc{h}},\disc{0}) \in X\).

\subsection{A general class of linear operators}

At first, we describe a general method for layer reduction, which is based on a projection
preserving the Hamiltonian structure of the linearized equation.
For an ETD method, it is desirable to only approximate the fast modes of the system. Based
on the eigenvalue analysis of section~\ref{sec:lin_and_modes}, we can exploit the fact
that the vertical modes are decreasing in magnitude with finer vertical resolution
(in contrast to the horizontal modes, which are increasing with finer horizontal resolution).

We introduce a reduced-layer space
\[
X^{\laylowL} =  X_{\grid{I}}^{\laylowL}\times X_{\grid{E}}^{\laylowL}
\quad\text{for } 1 \leq \laylowL \ll L,
\]
which is supposed to parametrize the fastest vertical modes of the discrete linear
operator \(\disc{A} = \Jdisc\, \delta^2\Hdisc\) as in~\eqref{eq:Abar}. 
We define a corresponding ansatz by the linear
mapping \(\disc{\Psi} \colon X^{\laylowL} \to X^L\) with the structure
\begin{equation*}
\disc{\Psi} =
\begin{pmatrix}
\disc{\Psi}_h & 0\\
0 & \disc{\Psi}_u\\
\end{pmatrix},
\end{equation*}
Here, the matrix \(\disc{\Psi}_h\) (and similarly, \(\disc{\Psi}_u\)) is defined by
\[
\left[\disc{\Psi}_h \laylow{\disc{h}}\right]_i = \sum_{j=1}^{\laylowL} \disc{\Psi}_h^{j,i} \laylow{\disc{h}}_{k,i},
\quad\text{for all } i\in \grid{I},
\]
and any \(\laylow{\disc{h}} \in X_{\grid{I}}^{\laylowL}\).
Here, the vector \(\disc{\Psi}^{j,i}_h \in \R^L\) should roughly correspond to the \(j\)-th
fastest vertical height mode, which can be different in each cell \(i \in \grid{I}\).
\begin{remark}
Consider the simplified case of a constant Coriolis term and bathymetry, 
i.e.\ \(f \equiv \textit{const}\), \(b \equiv \textit{const}\) with constant reference
heights \(h^0 \equiv \textit{const}\). Let \(\mu_j > \mu_{j+1} > 0\), \(j = 1,2,\dotsc,L\) be the eigenvalues of the matrix
\(A^{\textrm{vert}} = g\diag(h^0/\rho) R\) with \(R\) defined as in~\eqref{eq:Rhat},
corresponding to the eigenvectors
\(w^{j,h,vert} \in \R^L\). These modes correspond to the height variable, whereas the
corresponding modes for the velocity are given as
\(w^{j,u,\textrm{vert}} = \diag(1/\rho) R w^{j,h,\textrm{vert}}\). In this case, we can choose
\(\disc{\Psi}^{j,i}_h = w^{j,h,\textrm{vert}}\) and
\(\disc{\Psi}^{j,e}_u = w^{j,u,\textrm{vert}}\) for all cells and edges. In this particular
case, the following arguments would greatly simplify. However, since \(b\) and \(f\) are
generally not constant, the horizontal modes can only be defined in an
approximate sense, separately in each cell and edge.
\end{remark}

While the ansatz functions \(\disc{\Psi}\) can be chosen freely, in principle, it is
important to obtain appropriate dynamics for the reduced linear system. Here, since we are dealing with a
linear Hamiltonian system (the rotating multilayer wave equation), we will take care to
preserve this structure when deriving the reduced dynamics.
The special case of using only the single fastest barotropic mode, which leads to simple
concrete formulas, will be discussed further in section~\ref{eq:barotropic_ETD}.
We start by defining the reduced linearized Hamiltonian, which arises from
the canonical ansatz
\[
\reference{\laylow{\Hdisc}}(\laylow{\disc{V}}) = \reference{\Hdisc}(\disc{\Psi}\laylow{\disc{V}})
= \frac{1}{2}\inner*{\disc{\Psi}\laylow{\disc{V}}, \delta^2 \Hdisc \, \disc{\Psi}\laylow{\disc{V}}}_{X^{L}}
\]
for \(\laylow{\disc{V}} \in X^{\laylowL}\).
Now, we use additionally the fact that the mass matrix \(\disc{M}_{X^L}\) of the space \(X^L\)
commutes with \(\disc{\Psi}^\top\) in the sense that
\(\disc{M}_{X^L} \disc{\Psi} = \disc{\Psi} \disc{M}_{X^{\laylowL}}\). In fact, for
\(\disc{\Psi}_h\) (and similarly for \(\disc{\Psi}_u\) it simply holds
for any \(\laylow{\disc{h}} \in X_{\grid{I}}^{\laylowL}\) that
\[
\left[\disc{M}_{X_{\grid{I}}^L} \disc{\Psi}_h \laylow{\disc{h}}\right]_i
 = A_i \left[\disc{\Psi}_h \laylow{\disc{h}}\right]_i
= \left[\disc{\Psi}_h \disc{M}_{X_{\grid{I}}^{\laylowL}} \laylow{\disc{h}}\right]_i
\quad\text{for all } i\in \grid{I},
\]
where \(A_i\) it the area of the \(i\)-th cell. This corresponds to the fact that
\(\disc{\Psi}\) operates only on the layer
indices for each cell and edge stack, and that both mass matrices consist of multiple
identical copies of the (diagonal) single-layer mass-matrix.
Thus, it follows that
\[
\reference{\laylow{\Hdisc}}(\laylow{\disc{V}})
= \frac{1}{2} (\disc{M}_{X_{\grid{I}}^{L}} \disc{\Psi} \laylow{\disc{V}})^\top (\delta^2 \Hdisc \, \disc{\Psi}\laylow{\disc{V}})
= \frac{1}{2}\inner*{\laylow{\disc{V}}, \disc{\Psi}^\top \delta^2 \Hdisc \,
  \disc{\Psi}\laylow{\disc{V}}}_{X^{\laylowL}}
= \frac{1}{2} \inner*{\laylow{\disc{V}}, \delta^2 \laylow{\Hdisc} \laylow{\disc{V}}}_{X^{\laylowL}},
\]
with the corresponding reduced (linearized) Hamiltonian matrix defined as
\[
\delta^2 \laylow{\Hdisc}
= \disc{\Psi}^\top \delta^2 \Hdisc\, \disc{\Psi}.
\]
We note that the above projection can be computed separately for each cell- and
edge-stack, since for fixed layer coordinates \(\delta^2 \Hdisc\) is a
diagonal matrix in the horizontal dimensions. We also note that
\(\delta^2 \laylow{\Hdisc}\) shares the same property.

In order to derive a projection operator, we first introduce the transpose matrix of
\(\disc{\Psi}\), denoted by \(\disc{\Psi}^\top\colon X^L \to X^{\laylowL}\).
Note that for the particular setting considered here, it is
identical to the adjoint of \(\disc{\Psi}\), i.e., it fulfills
\[
\inner{\disc{\Psi}^\top \disc{V}, \laylow{\disc{V}}}_{X^{\laylowL}}
= \inner{\disc{V}, \disc{\Psi} \laylow{\disc{V}}}_{X^L},
\]
for any \(\disc{V}\in X^L\) and \(\laylow{\disc{V}} \in X^{\laylowL}\).
Here, we have used again the fact that the mass matrix commutes with \(\disc{\Psi}\).
Although \(\disc{\Psi}^\top\) maps from the full to the reduced space, it is not
appropriate to map solution variables from the full to the reduced space.
Instead, in order to provide an energetically consistent projection of the linearized equation to
the reduced space, we introduce \(\disc{\Psi}^\dagger\), the (generalized) Moore--Penrose
pseudoinverse
\[
\disc{\Psi}^\dagger\colon X^L \to X^{\laylowL},
\quad
\disc{\Psi}^\dagger = (\delta^2 \laylow{\Hdisc})^{-1} \disc{\Psi}^\top \delta^2 \Hdisc.
\]
Hence, $\disc{\Psi}^\dagger$ gives the reduced layer coordinates for any discrete solution
variable. Again, we note that the inverse \((\delta^2 \laylow{\Hdisc})^{-1}\) can be
computed in a cell- and edge-stack wise fashion, reducing the solution to a large
number of \(\laylowL\times\laylowL\) sized systems.
The definition of \(\disc{\Psi}^\dagger\) is motivated by the following derivation:
\begin{proposition}
\label{prop:psi_dagger}
The restriction matrix $\disc{\Psi}^\dagger$ gives the solution to the following minimization problem:
For any \(\disc{V} \in X^L\) we have \(\disc{\Psi}^\dagger \disc{V} = \laylow{\disc{V}}\) where
\begin{align}\label{eq:Gdagger_min}
\laylow{\disc{V}}
 &= \argmin_{\laylow{\disc{V}} \in X^{\laylowL}}\norm*{\disc{V} - \disc{\Psi}\laylow{\disc{V}}}_{\delta^2 \Hdisc}
  = \argmin_{\laylow{\disc{V}} \in X^{\laylowL}} \;\frac{1}{2} \inner{\disc{V} - \disc{\Psi}\laylow{\disc{V}}, \delta^2\Hdisc (\disc{V} - \disc{\Psi} \laylow{\disc{V}})}_{X^L}\;.
\end{align}
\end{proposition}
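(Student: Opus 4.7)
The plan is to prove the identity by recognizing the right-hand side as a standard weighted least-squares (Galerkin) problem for the coercive quadratic functional
\[
J(\laylow{\disc{V}}) = \tfrac{1}{2}\inner{\disc{V} - \disc{\Psi}\laylow{\disc{V}},\, \delta^2\Hdisc (\disc{V} - \disc{\Psi}\laylow{\disc{V}})}_{X^L},
\]
and then showing that the normal equations produced by the first-order optimality condition are exactly the defining equations of $\disc{\Psi}^\dagger$.

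First I would observe that $\delta^2 \Hdisc$ is symmetric positive (semi-)definite, so $J$ is a convex quadratic in $\laylow{\disc{V}}$, and hence any critical point is a global minimum. Next I would compute the G\^ateaux derivative of $J$ in an arbitrary test direction $\laylow{\disc{W}} \in X^{\laylowL}$, which yields
\[
J'(\laylow{\disc{V}})[\laylow{\disc{W}}] = -\inner{\disc{\Psi}\laylow{\disc{W}},\,\delta^2\Hdisc(\disc{V} - \disc{\Psi}\laylow{\disc{V}})}_{X^L}.
\]
Using the adjoint identity $\inner{\disc{\Psi}^\top \disc{Y}, \laylow{\disc{W}}}_{X^{\laylowL}} = \inner{\disc{Y}, \disc{\Psi}\laylow{\disc{W}}}_{X^L}$ (established right before the proposition via commutation of $\disc{\Psi}$ with the block-diagonal mass matrices), I would transpose $\disc{\Psi}$ to the other slot and obtain the reduced normal equation
\[
\inner{\laylow{\disc{W}},\, \disc{\Psi}^\top \delta^2\Hdisc\,\disc{\Psi}\,\laylow{\disc{V}}}_{X^{\laylowL}} = \inner{\laylow{\disc{W}},\, \disc{\Psi}^\top \delta^2\Hdisc\,\disc{V}}_{X^{\laylowL}}
\]
for all test vectors $\laylow{\disc{W}}$. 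Since this must hold for all $\laylow{\disc{W}}$, we get $\delta^2\laylow{\Hdisc}\,\laylow{\disc{V}} = \disc{\Psi}^\top \delta^2\Hdisc\,\disc{V}$, which after inverting is precisely $\laylow{\disc{V}} = \disc{\Psi}^\dagger \disc{V}$.

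The only subtle step is the well-posedness, namely invertibility of $\delta^2\laylow{\Hdisc} = \disc{\Psi}^\top \delta^2 \Hdisc\, \disc{\Psi}$. This follows as long as $\disc{\Psi}$ has trivial kernel and $\delta^2\Hdisc$ is positive definite on the range of $\disc{\Psi}$: for such vectors $\laylow{\disc{V}} \neq 0$ we have $\disc{\Psi}\laylow{\disc{V}} \neq 0$ and therefore $\inner{\laylow{\disc{V}}, \delta^2\laylow{\Hdisc}\laylow{\disc{V}}}_{X^{\laylowL}} = \inner{\disc{\Psi}\laylow{\disc{V}}, \delta^2\Hdisc\,\disc{\Psi}\laylow{\disc{V}}}_{X^L} > 0$. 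I would state this as a standing assumption on the choice of mode basis $\disc{\Psi}$, noting that it is automatically fulfilled when the columns $\disc{\Psi}^{j,i}$ are chosen as the (pointwise) approximate vertical eigenmodes of the positive operator from Section~\ref{sec:lin_and_modes}, since linear independence of vertical modes is preserved at every cell and edge stack. Under this assumption the minimizer is unique and coincides with $\disc{\Psi}^\dagger \disc{V}$, completing the proof. I expect the bulk of the argument to be routine; the only genuine point requiring care is explicitly justifying the transposition through $\disc{\Psi}$, since in general the natural adjoint of $\disc{\Psi}$ with respect to weighted inner products would involve the mass matrices, and it is the block-diagonal structure noted above that makes $\disc{\Psi}^\top$ serve as the adjoint.
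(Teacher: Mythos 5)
Your proof is correct and follows essentially the same route as the paper: write out the first-order optimality (normal) equations for the quadratic functional and use the commutation of the block-diagonal mass matrix with $\disc{\Psi}$ (equivalently, the adjoint identity for $\disc{\Psi}^\top$) to reduce them to $\delta^2\laylow{\Hdisc}\,\laylow{\disc{V}} = \disc{\Psi}^\top\delta^2\Hdisc\,\disc{V}$. Your additional remark on the invertibility of $\delta^2\laylow{\Hdisc}$ (full column rank of $\disc{\Psi}$ plus positivity of $\delta^2\Hdisc$ on its range) is a point the paper leaves implicit in the definition of $\disc{\Psi}^\dagger$, and making it explicit is a welcome refinement.
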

\begin{proof}
We define the energy norm \(\norm{\disc{V}}_{\delta^2 \Hdisc}
= \sqrt{\inner{\disc{V}, \delta^2\Hdisc\, \disc{V}}_{X}}\) in the canonical way. The derivation is then
standard, by writing out the optimality condition of~\eqref{eq:Gdagger_min}, given by
\[
\disc{\Psi}^\top \disc{M}_{X} \delta^2\Hdisc \, \disc{\Psi} \laylow{\disc{V}}
 = \disc{\Psi}^\top \disc{M}_{X} \delta^2\Hdisc\, \disc{V}.
\]
Now, we use again the fact that the mass matrix \(\disc{M}_{X^L}\) of the space \(X^L\)
commutes with \(\disc{\Psi}^\top\) in the sense that
\(\disc{\Psi}^\top \disc{M}_{X^L} = \disc{M}_{X^{\laylowL}} \disc{\Psi}^\top\).
\end{proof}

Additionally, we introduce the orthogonal projection
\[
\disc{P} = \disc{\Psi}\disc{\Psi}^\dagger \colon X^L \to X^L\;.
\]
By Proposition~\ref{prop:psi_dagger}, the projection is given as \(\disc{P} \disc{V} =
\disc{\Psi} \laylow{\disc{V}}\) , where \(\laylow{\disc{V}}\) is the minimizer
of~\eqref{eq:Gdagger_min}, and thus
minimizes the projection error in the canonical linearized energy norm. 
Based on this choice of the projection, a projected linear operator
\(\disc{A}_P\) can be defined for \(\disc{A} = \reference{\disc{A}} = \Jdisc\,\delta^2\Hdisc\) as
\begin{equation}
\label{eq:projected_A}
\begin{aligned}
\disc{A}_P &= \disc{P} \disc{A} \disc{P} = \disc{\Psi} \laylow{\disc{A}} \disc{\Psi}^\dagger, \\
\text{where}\quad \laylow{\disc{A}} &=  \disc{\Psi}^\dagger \disc{A} \disc{\Psi}.
\end{aligned}
\end{equation}
Here, \(\laylow{\disc{A}} \colon X^{\laylowL} \to X^{\laylowL}\) is the corresponding
reduced layer operator.
Using the properties of \(\disc{\Psi}^\dagger\), we now verify that the operators
\(\laylow{\disc{A}}\) and \(\disc{A}_P\) again have Hamiltonian structure, together with an
  appropriate definition of the reduced \(\Jcont\)-operator.
\begin{proposition}
Define the skew-adjoint operators
\begin{equation}
\begin{aligned}
\Jdisc_P&\colon X^{L} \to X^{L} & \Jdisc_P&= \disc{P} \Jdisc \disc{P}^\top \\
\laylow{\Jdisc} &\colon X^{\laylowL} \to X^{\laylowL} & \laylow{\Jdisc} &= \disc{\Psi}^\dagger \Jdisc (\disc{\Psi}^\dagger)^\top.
\end{aligned}
\end{equation}
Then, it holds that 
\[
\laylow{\disc{A}} = \laylow{\Jdisc}\, \delta^2\laylow{\Hdisc}
\quad\text{and}\quad
\disc{A}_P = \Jdisc_P \, \delta^2\Hdisc_P.
\]
where \(\delta^2\Hdisc_P = \disc{P}^\top \delta^2 \Hdisc \disc{P}\).
\end{proposition}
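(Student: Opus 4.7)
The plan is to establish both factorizations from two pivotal identities about the pseudoinverse. First, the relation $\disc{\Psi}^\dagger \disc{\Psi} = \Id_{X^{\laylowL}}$, obtained by substituting the definition of $\disc{\Psi}^\dagger$ and recognizing $\disc{\Psi}^\top \delta^2\Hdisc \disc{\Psi} = \delta^2\laylow{\Hdisc}$; this in turn yields the projection property $\disc{P}^2 = \disc{P}$. Second, the closed form $(\disc{\Psi}^\dagger)^\top = \delta^2\Hdisc\, \disc{\Psi}\,(\delta^2\laylow{\Hdisc})^{-1}$, which follows by transposing the definition of $\disc{\Psi}^\dagger$ and using that $\delta^2\Hdisc$ and $\delta^2\laylow{\Hdisc}$ are self-adjoint with respect to the respective $X$-inner products (so that their matrices, after the mass-matrix commutation $\disc{M}_{X^L}\disc{\Psi} = \disc{\Psi}\disc{M}_{X^{\laylowL}}$, are symmetric in the same bookkeeping where $\disc{\Psi}^{\top\top} = \disc{\Psi}$).

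For the reduced-layer identity $\laylow{\disc{A}} = \laylow{\Jdisc}\,\delta^2\laylow{\Hdisc}$, I would simply unfold and cancel: starting from $\laylow{\Jdisc}\,\delta^2\laylow{\Hdisc} = \disc{\Psi}^\dagger\Jdisc\,(\disc{\Psi}^\dagger)^\top\,\delta^2\laylow{\Hdisc}$ and inserting the closed form above, the factor $(\delta^2\laylow{\Hdisc})^{-1}\delta^2\laylow{\Hdisc}$ collapses to the identity, leaving $\disc{\Psi}^\dagger\,\Jdisc\,\delta^2\Hdisc\,\disc{\Psi} = \disc{\Psi}^\dagger\disc{A}\disc{\Psi} = \laylow{\disc{A}}$, which matches the definition in~\eqref{eq:projected_A}.

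For the full-space identity $\disc{A}_P = \Jdisc_P\,\delta^2\Hdisc_P$, the cleanest intermediate step is the intertwining relation $\disc{P}^\top\,\delta^2\Hdisc = \delta^2\Hdisc\,\disc{P}$, which encodes that $\disc{P}$ is an orthogonal projection in the $\delta^2\Hdisc$-inner product (already implicit in Proposition~\ref{prop:psi_dagger}); this drops out by multiplying $(\disc{\Psi}^\dagger)^\top\disc{\Psi}^\top = \delta^2\Hdisc\,\disc{\Psi}(\delta^2\laylow{\Hdisc})^{-1}\disc{\Psi}^\top$ on the right by $\delta^2\Hdisc$ and recognizing $\disc{\Psi}(\delta^2\laylow{\Hdisc})^{-1}\disc{\Psi}^\top\,\delta^2\Hdisc = \disc{P}$. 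Combining this with $\disc{P}^2=\disc{P}$ simplifies $\delta^2\Hdisc_P = \disc{P}^\top\delta^2\Hdisc\,\disc{P} = \delta^2\Hdisc\,\disc{P}$, and then $\Jdisc_P\,\delta^2\Hdisc_P = \disc{P}\Jdisc\,\disc{P}^\top\,\delta^2\Hdisc\,\disc{P} = \disc{P}\Jdisc\,\delta^2\Hdisc\,\disc{P}\disc{P} = \disc{P}\disc{A}\disc{P} = \disc{A}_P$.

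The skew-adjointness of $\Jdisc_P$ and $\laylow{\Jdisc}$ is essentially by inspection, since each is a sandwich of $\Jdisc$ between a matrix and its transpose, so adjunction w.r.t.\ the appropriate $X$-inner product reproduces the same sandwich around $\Jdisc^\top = -\Jdisc$. The only place where real care is needed is keeping track of which inner product is used when writing \emph{transpose}: $\delta^2\Hdisc$ is symmetric with respect to the weighted $X^L$-inner product (not the Euclidean one), and the commutation of $\disc{\Psi}$ with the mass matrices is what makes the adjoint of $\disc{\Psi}$ coincide with its ``matrix'' transpose $\disc{\Psi}^\top$; this bookkeeping is the one subtle point in the argument, while everything else is algebraic cancellation of $(\delta^2\laylow{\Hdisc})^{-1}\cdot\delta^2\laylow{\Hdisc}$ and $\disc{P}^2=\disc{P}$.
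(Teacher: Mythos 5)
Your proof is correct and follows essentially the same route as the paper's: both rest on the transposed pseudoinverse identity $(\disc{\Psi}^\dagger)^\top\,\delta^2\laylow{\Hdisc} = \delta^2\Hdisc\,\disc{\Psi}$ together with the idempotence of $\disc{P}$, with the only difference being that you contract $\Jdisc_P\,\delta^2\Hdisc_P$ down to $\disc{A}_P$ via the intertwining relation $\disc{P}^\top\delta^2\Hdisc = \delta^2\Hdisc\,\disc{P}$, whereas the paper expands $\disc{A}_P = \disc{\Psi}\disc{\Psi}^\dagger\disc{A}\disc{\Psi}\disc{\Psi}^\dagger$ and recognizes the factors using $(\disc{\Psi}^\dagger)^\top\disc{\Psi}^\top = \disc{P}^\top = \disc{P}^\top\disc{P}^\top$. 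Your explicit derivation of $\disc{\Psi}^\dagger\disc{\Psi} = \Id$ and your care about which inner product the transpose refers to are both sound and make explicit what the paper leaves implicit.
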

\begin{proof}
By definition of \(\disc{\Psi}^\dagger\) we have
\((\disc{\Psi}^\dagger)^\top \, \delta^2\laylow{\Hdisc} = \delta^2 \Hdisc \, \disc{\Psi}\) and thus
\[ 
\disc{\Psi}^\dagger \disc{A} \disc{\Psi} = \disc{\Psi}^\dagger \Jdisc \,\delta^2\Hdisc \disc{\Psi}
 = \disc{\Psi}^\dagger \Jdisc(\disc{\Psi}^\dagger)^\top \, \delta^2\laylow{\Hdisc}
 = \laylow{\Jdisc} \, \delta^2\laylow{\Hdisc}
\]
Concerning the second case, we compute
\[
\disc{A}_P = \disc{\Psi}\disc{\Psi}^\dagger \disc{A} \disc{\Psi}\disc{\Psi}^\dagger
= \disc{\Psi}\disc{\Psi}^\dagger\Jdisc (\disc{\Psi}^\dagger)^\top\disc{\Psi}^\top \delta^2\Hdisc \disc{\Psi}\disc{\Psi}^\dagger,
\]
using the previous result. Now we observe that
\((\disc{\Psi}^\dagger)^\top\disc{\Psi}^\top = \disc{P}^\top
= \disc{P}^\top\disc{P}^\top\), owing to the fact that \(\disc{P}\) is a projection.
\end{proof}
Thereby, the structural properties important for stability of the solutions are preserved.
Thus, we can easily employ the derived linear operator in the context of an ETD method, by
simply replacing the operator \(\disc{A}\) by \(\disc{A}_P\). By the
construction the adaptation is straightforward.
However, for the practical use of the method it is important that the
computation of the matrix \(\phi\)-functions can be reduced
to a smaller-size problem, based on the following observation.
\begin{proposition}
\label{eq:prop_exp_Ahat}
Let \(\disc{A}_P = \disc{P}\disc{A}\disc{P}\) and \(\laylow{\disc{A}} = \disc{\Psi}^\dagger
\disc{A} \disc{\Psi}\) be defined as above. Then, it holds for any \(s \geq 0\) that
\[
\phi_s(\disc{A}_P) = \frac{1}{s!}(\Id - \disc{P}) + \disc{\Psi} \,\phi_s(\laylow{\disc{A}})\, \disc{\Psi}^\dagger.
\]
\end{proposition}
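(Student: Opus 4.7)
The plan is to exploit the power series representation $\phi_s(z) = \sum_{k=0}^\infty z^k/(k+s)!$ together with the fact that $\disc{\Psi}^\dagger$ acts as a left inverse of $\disc{\Psi}$. First I would verify the identity $\disc{\Psi}^\dagger \disc{\Psi} = \Id_{X^{\laylowL}}$, which follows directly from the definition $\disc{\Psi}^\dagger = (\delta^2 \laylow{\Hdisc})^{-1} \disc{\Psi}^\top \delta^2 \Hdisc$ together with $\delta^2 \laylow{\Hdisc} = \disc{\Psi}^\top \delta^2\Hdisc\, \disc{\Psi}$. This in particular reconfirms that $\disc{P} = \disc{\Psi}\disc{\Psi}^\dagger$ is idempotent, as already used in the preceding proposition.

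Next, I would establish by a short induction that
\[
\disc{A}_P^k = \disc{\Psi}\,\laylow{\disc{A}}^k\,\disc{\Psi}^\dagger
\quad\text{for every } k\geq 1.
\]
The base case $k=1$ is exactly~\eqref{eq:projected_A}; the inductive step collapses the intermediate factor $\disc{\Psi}^\dagger\disc{\Psi} = \Id_{X^{\laylowL}}$ to pass from $\disc{A}_P^{k+1} = \disc{A}_P \cdot \disc{A}_P^k = \disc{\Psi}\laylow{\disc{A}}\disc{\Psi}^\dagger \cdot \disc{\Psi}\laylow{\disc{A}}^k\disc{\Psi}^\dagger$ to $\disc{\Psi}\laylow{\disc{A}}^{k+1}\disc{\Psi}^\dagger$. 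The case $k=0$ is separate since $\disc{A}_P^0 = \Id_{X^L}$ while $\disc{\Psi}\disc{\Psi}^\dagger = \disc{P} \neq \Id$; this asymmetry is precisely what produces the $(\Id - \disc{P})/s!$ correction in the final formula.

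Finally, substituting the formula for $\disc{A}_P^k$ into the convergent series for $\phi_s(\disc{A}_P)$, splitting off the $k=0$ term, factoring $\disc{\Psi}$ on the left and $\disc{\Psi}^\dagger$ on the right of the tail, and then adding and subtracting $\disc{\Psi}\disc{\Psi}^\dagger/s! = \disc{P}/s!$ to reconstruct $\phi_s(\laylow{\disc{A}})$ inside the parentheses yields
\[
\phi_s(\disc{A}_P)
= \frac{1}{s!}\Id + \disc{\Psi}\Bigl(\phi_s(\laylow{\disc{A}}) - \frac{1}{s!}\Id_{\laylowL}\Bigr)\disc{\Psi}^\dagger
= \frac{1}{s!}(\Id - \disc{P}) + \disc{\Psi}\,\phi_s(\laylow{\disc{A}})\,\disc{\Psi}^\dagger,
\]
which is the claim. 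The convention $\phi_0 = \exp$ is covered by the same argument since the series $\sum_k z^k/k!$ still has a well-defined $k=0$ term equal to $1$.

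There is no substantial obstacle here: the argument is essentially algebraic once the left-inverse identity $\disc{\Psi}^\dagger\disc{\Psi} = \Id_{X^{\laylowL}}$ is in hand. The only mild subtlety worth flagging is the need to treat the $k=0$ term separately, which is most transparent via the explicit power series rather than via the integral representation of $\phi_s$ used elsewhere in the paper.
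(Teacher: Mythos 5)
Your proposal is correct and follows essentially the same route as the paper: expand $\phi_s$ as the power series $\sum_{k\geq 0} z^k/(k+s)!$, use $\disc{A}_P^k = \disc{\Psi}\,\laylow{\disc{A}}^k\,\disc{\Psi}^\dagger$ for $k\geq 1$ (which rests on $\disc{\Psi}^\dagger\disc{\Psi}=\Id$), and split off the $k=0$ term to produce the $(\Id-\disc{P})/s!$ correction. Your write-up is merely more explicit than the paper's about verifying the left-inverse identity and the induction, which the paper states without proof.
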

\begin{proof}
Due to \eqref{eq:phi_function}, we have
\[
\phi_{s}(\disc{A}_P) = \sum_{k = 0}^\infty \frac{\disc{A}_P^k}{(k+s)!}
= \frac{1}{s!}(\Id - \disc{P}) + \disc{\Psi} \sum_{k = 0}^\infty \frac{(\disc{\Psi}^\dagger\disc{A} \disc{\Psi})^k}{(k+s)!} \disc{\Psi}^\dagger,
\]
using the identities \(\disc{P} = \disc{\Psi} \disc{\Psi}^\dagger\), \(\disc{A}_P^0 = \Id\) and
\(\disc{A}_P^k = \disc{\Psi} (\disc{\Psi}^\dagger \disc{A} \disc{\Psi})^k \disc{\Psi}^\dagger\) for \(k \geq 1\).
\end{proof}
Thereby, the computation of \(\phi_s(\dt\disc{A}_P)\disc{b}_s\) in an ETD method can be reduced to
the computation of \(\phi_s(\dt\laylow{\disc{A}}) \laylow{\disc{b}}_s\), the restriction
\(\laylow{\disc{b}}_s = \disc{\Psi}^\dagger \disc{b}_s\), and another application of the prolongation
\(\disc{\Psi}^\dagger\). In the context of an ETD-Krylov method, this significantly reduces
the required computational work.

\subsection{Implementation of the resulting ETD methods}
Finally, for illustrative purposes, we explicitly write the exponential Euler method using the
projected operator $\disc{A}_P$.
On a high level, we simply replace the operator \(\disc{A}\) by \(\disc{A}_P\)
in~\eqref{eq:exp_euler}.
Using Proposition~\ref{eq:prop_exp_Ahat} it can then be rewritten as
\begin{align*}
  \disc{V}_{n+1}
  &= \disc{V}_n + \dt\,\disc{\Psi}\,\phi_1(\dt\disc{A}_P) \disc{F}[\disc{V}_{n}]\\
  &= \disc{V}_n + \dt(\Id - \disc{P}) \disc{F}[\disc{V}_{n}]
   + \dt\,\disc{\Psi}\,\phi_1(\dt\laylow{\disc{A}})\,\disc{\Psi}^\dagger \disc{F}[\disc{V}_{n}]\;.
\end{align*}
Thus, the method performs an explicit Euler step with the forcing term projected to the
orthogonal complement of the span of \(\disc{\Psi}\), whereas the part of the forcing term
in the space spanned by \(\disc{\Psi}\) is treated with the matrix exponential associated to
the projected matrix $\laylow{\disc{A}}$.
If the matrix $\laylow{\disc{A}}$ is assembled ahead of time, the evaluation of the matrix
exponential of the reduced matrix is more efficient, since the number of
degrees of freedom and the nonzero entries of the projected matrix is much smaller than
the number of degrees of freedom of the original one.
Similarly, the higher order ETD methods can be rewritten in the above
way to a form that is suitable for implementation purposes.

\subsection{Barotropic ETD method}
\label{eq:barotropic_ETD}
Due to the fact that the quotient of the first mode (the fast barotropic mode) and
the second mode (the fastest baroclinic mode)
is usually much bigger than one in realistic global ocean simulations,
the stiffest parts of the linear operator can be captured by a particularly simple choice
of \(\disc{\Psi}\), which exploits the analytical structure of this mode. We note that state-of-the art
global models exploit this splitting as well. In particular, we refer to the widely
used split-explicit scheme; see~\cite{higdon2005two}.
Here, a suitable method arises from a direct application of an exponential integrator with a
particular choice of \(\disc{\Psi}\), which we refer to as barotropic ETD (B-ETD) method.

For a reference configuration \(\reference{\disc{h}} \in X_{\grid{I}}^L\) define the
corresponding total height and average density as
\[
\widehat{\disc{h}} = \sum_{k=1}^L \reference{\disc{h}}_{k},
\qquad
\widehat{\disc{\rho}} = \sum_{k=1}^L
\frac{\rho_k\reference{\disc{h}}_{k}}{\widehat{\disc{h}}}
\in X_{\grid{I}}.
\]
Based on the approximate form of the fastest vertical mode (see section~\ref{sec:lin_and_modes}), 
we consider the concrete choice with \(\laylowL = 1\) given by
\begin{equation}
\label{eq:barotropic_ansatz}
\disc{\Psi} =
\begin{pmatrix}
\disc{y} & 0\\
0 & \disc{1}\\
\end{pmatrix},\;
\quad\text{where }
\disc{1} \equiv 1,\;
\disc{y}_{k} = \reference{\disc{h}}_{k} / \widehat{\disc{h}}.
\end{equation}
In the following, we compute the concrete form of the layer-reduced Hamiltonian and the
operator \(\disc{\Psi}^\dagger\), containing the test-functions.
Due to the fact that the average density is given as \(\widehat{\disc{\rho}}
= \rho^\top \disc{y}\), the concrete form of the reduced Hamiltonian is readily derived as
\begin{equation*}
\delta^2\widehat{\Hdisc} = \begin{pmatrix}
     g \diag(\widehat{\disc{r}}) & 0\\
     0 & \diag\left(\inp{\widehat{\disc{\rho}}\ast\widehat{\disc{h}}}_{\grid{E}}\right)
    \end{pmatrix},
\quad
\text{where } \widehat{\disc{r}} = \disc{y}^\top R \disc{y} \in X_{\grid{I}}.
\end{equation*}
with the matrix \(R = T^\top\diag(\Delta\rho) T = (\rho_{\min\{k,l\}})_{k,l}\) introduced
in section~\ref{sec:lin_and_modes}.
We note that \(\delta^2\widehat{\Hdisc}\) simply corresponds to a quadratic approximation
of a single-layer Hamiltonian, albeit with variable densities. In fact, \(\laylow{\disc{h}}\)
is the total column height of the reference configuration, and both \(\laylow{\disc{\rho}}\) and
\(\laylow{\disc{r}}\) are average values of the density over each stack.
A simple computation now yields the concrete form of
\(\disc{\Psi}^\dagger\) as
\begin{equation*}
  \disc{\Psi}^\dagger = \begin{pmatrix}
     (1 / \widehat{\disc{r}}) \ast \disc{y}^\top R & 0\\
     0 & \inp{\rho \ast \reference{\disc{h}}}_{\grid{E}}^\top \,/\, \inp{\widehat{\disc{h}}\ast\widehat{\disc{\rho}}}_{\grid{E}}
    \end{pmatrix}\;.
\end{equation*}
At first glance, the concrete form of \(\disc{\Psi}^\dagger\) is not very instructive,
even though it can be easily computed in practice. However, in the special case of constant densities,
it simplifies further.
\begin{remark}
In the case where \(\rho_k = \reference{\rho}\) for
\(k = 1,2,\dotsc,L\), we obtain that
\begin{equation*}
  \disc{\Psi}^\dagger = \begin{pmatrix}
     \disc{1}^\top & 0\\
     0 & \inp{\reference{\disc{h}}}_{\grid{E}}^\top \,/\, \inp{\widehat{\disc{h}}}_{\grid{E}}
    \end{pmatrix}\;.
\end{equation*}
Thus, the roles of the test-functions in \(\disc{\Psi}^\dagger\) and ansatz-functions in
\(\disc{\Psi}\) are simply interchanged with respect to the continuity and momentum equation.
We note that this closely resembles the averaging operators employed in the split-explicit
scheme; cf.~\cite{higdon2005two}.
\end{remark}

\subsection{Total mass conservation}

One drawback of the outlined approach is that the form of the test
functions in \(\disc{\Psi}^\dagger\) can not be controlled directly, rather they arise from the choice
of \(\disc{\Psi}\) in an indirect way. This is a problem for instance for exact
mass-conservation as considered in section~\ref{sec:mass_conservation}. We briefly
describe a simple remedy for this in the context of the barotropic method.

In a first step, we replace the second variation of the Hamiltonian by the modified
version
\[
\delta^2\widetilde{\Hdisc}
= \begin{pmatrix}
  g \diag\left(\rho \rho^\top / \widehat{\disc{\rho}}\right) & 0\\
  0 & \diag\left(\rho\ast\inp{\disc{h}}_{\grid{E}}\right)
\end{pmatrix},
\]
where the matrix \(R\) is replaced by the cell-wise defined rank-one matrix
\(\rho \rho^\top / \widehat{\disc{\rho}}_i\in \R^{L\times L}\) for every \(i\in \grid{I}\).
The reduced operator is then derived in the same way as before, based on this modified
Hamiltonian. Using again the ansatz \(\disc{\Psi}\) from~\eqref{eq:barotropic_ansatz}, we obtain now
\begin{equation*}
\delta^2\widehat{\Hdisc} = \begin{pmatrix}
     g \diag(\widehat{\disc{\rho}}) & 0\\
     0 & \diag\left(\inp{\widehat{\disc{\rho}}\widehat{\disc{h}}}_{\grid{E}}\right)
    \end{pmatrix}
\text{ and } 
  \disc{\Psi}^\dagger = \begin{pmatrix}
     \rho^\top / \widehat{\disc{\rho}} \, & 0\\
     0 & \inp{\rho \ast \reference{\disc{h}}}_{\grid{E}}^\top \,/\, \inp{\widehat{\disc{h}}\ast\widehat{\disc{\rho}}}_{\grid{E}}
    \end{pmatrix}\;.
\end{equation*}
The resulting linear operator from this choice leads to global mass conservation.
\begin{proposition}
For a reference configuration, define as before
\(\widetilde{\disc{A}}_P = \disc{P} \,\Jdisc\, \delta^2
\widetilde{\Hdisc}\, \disc{P} =
\disc{\Psi}\,\widehat{\Jdisc}\,\delta^2\widehat{\Hdisc}\,\disc{\Psi}^\dagger\).
Then, a corresponding ETD-method with \(\disc{A}_n = \widetilde{\disc{A}}_P\) preserves the
total mass;
\[
\disc{M}[\disc{h}_{n+1}] = \disc{M}[\disc{h}_n],
\quad \text{where } \disc{M}[\disc{h}] = \sum_{k=1}^L \rho_k(\disc{1},\disc{h}_k)_{\grid{I}},
\]
and \((\disc{h}_n,\disc{u}_n) = \disc{V}_n\) are the time steps~\eqref{eq:ETD_final}.
\end{proposition}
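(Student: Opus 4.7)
The plan is to apply Theorem~\ref{thm:linear_invariant} with the mass functional $\disc{l} = (\disc{\rho}, \disc{0}) \in X^L$, where $\disc{\rho}_k = \rho_k \disc{1}$, so that $\inner{\disc{l}, \disc{V}}_{X^L} = \disc{M}[\disc{h}]$. The first hypothesis of that theorem, $\inner{\disc{l}, \disc{F}[\disc{V}]}_{X^L} = 0$, is the continuous-time mass conservation already established in Section~\ref{sec:mass_conservation}. The only nontrivial step is therefore to check the linear-operator hypothesis $\inner{\disc{l}, \widetilde{\disc{A}}_P \disc{V}}_{X^L} = 0$ for every $\disc{V}\in X^L$.

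Using the form $\widetilde{\disc{A}}_P = \disc{P}\Jdisc\,\delta^2\widetilde{\Hdisc}\,\disc{P}$ and moving the outer $\disc{P}$ to the left via the $X^L$-adjoint, this splits into two claims: (i) the fixed-point identity $\disc{P}^\top \disc{l} = \disc{l}$, and (ii) the divergence identity $\inner{\disc{l}, \Jdisc\,\delta^2\widetilde{\Hdisc}\,\disc{W}}_{X^L} = 0$ for all $\disc{W}\in X^L$. For (i), I would expand $\disc{P}^\top = (\disc{\Psi}^\dagger)^\top \disc{\Psi}^\top$ and compute stack-wise. The $h$-block of $\disc{\Psi}^\top \disc{l}$ collapses in each cell $i$ to $\sum_k y_{k,i}\rho_k = \sum_k (\reference{\disc{h}}_{k,i}/\widehat{\disc{h}}_i)\rho_k = \widehat{\disc{\rho}}_i$, giving $\disc{\Psi}^\top \disc{l} = (\widehat{\disc{\rho}},\disc{0})$. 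The $h$-block of the \emph{modified} $\disc{\Psi}^\dagger$ is $\rho^\top/\widehat{\disc{\rho}}$, so its transpose sends the single-layer quantity $\widehat{\disc{\rho}}$ to the layer-$k$ entry $(\rho_k/\widehat{\disc{\rho}})\ast\widehat{\disc{\rho}} = \rho_k\disc{1}$. Thus $(\disc{\Psi}^\dagger)^\top(\widehat{\disc{\rho}},\disc{0}) = (\disc{\rho},\disc{0}) = \disc{l}$. For (ii), the $h$-component of $\Jdisc\,\delta^2\widetilde{\Hdisc}\,\disc{W}$ has the form $-(1/\rho_k)\Ddiv\boldsymbol{\xi}_k$ for some edge vector $\boldsymbol{\xi}_k$, so the weighted sum becomes
\[
\inner{\disc{l}, \Jdisc\,\delta^2\widetilde{\Hdisc}\,\disc{W}}_{X^L}
= -\sum_{k=1}^L (\disc{1}, \Ddiv\boldsymbol{\xi}_k)_{\grid{I}}
= \sum_{k=1}^L (\Dgrad\disc{1}, \boldsymbol{\xi}_k)_{\grid{E}} = 0,
\]
by the discrete adjointness of $\Ddiv$ and $\Dgrad$ together with $\Dgrad\disc{1} = 0$.

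The crux of the argument is the identity (i). With the original $\delta^2\Hdisc$, the $h$-block of $\disc{\Psi}^\dagger$ contains the weight $R\disc{y}/\widehat{\disc{r}}$ instead of $\rho/\widehat{\disc{\rho}}$, and $(\disc{\Psi}^\dagger)^\top\widehat{\disc{\rho}}$ does not reproduce $\disc{\rho}$ in general; the replacement of $R$ by the cell-wise rank-one matrix $\rho\rho^\top/\widehat{\disc{\rho}}$ in $\delta^2\widetilde{\Hdisc}$ is engineered precisely so that the density vector $\rho$ becomes an invariant direction of $\disc{P}^\top$. Once (i) is secured, (ii) is just the same divergence-testing argument underlying the mass-conservation corollary for the unprojected operators, and Theorem~\ref{thm:linear_invariant} then yields the claim.
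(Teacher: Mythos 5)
Your proof is correct and follows essentially the same route as the paper: both reduce the claim to the hypothesis of Theorem~\ref{thm:linear_invariant} with $\disc{l}=(\disc{\rho},\disc{0})$, establish the key identity via the rank-one structure $\disc{\Psi}_h\disc{\Psi}_h^\dagger=\disc{y}\rho^\top/\widehat{\disc{\rho}}$ together with $\rho^\top\disc{y}=\widehat{\disc{\rho}}$, and finish with the divergence-adjointness argument on the height component of $\Jdisc$. The only cosmetic difference is that you phrase the key identity as the adjoint fixed point $\disc{P}^\top\disc{l}=\disc{l}$, which is just the transpose of the paper's $\rho^\top\disc{\Psi}_h\disc{\Psi}_h^\dagger=\rho^\top$.
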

\begin{proof}
Defining the vector \(\disc{l} =
(\disc{\rho}, \disc{0})\), mass can be computed as \(\disc{M}(\disc{V})
= \inner{\disc{l}, \disc{V}}_X\), and with Theorem~\ref{thm:linear_invariant}, we have to verify that
\[
(\disc{l}, \widetilde{\disc{A}}_P \disc{V})_X
= (\disc{\rho}, \disc{\Psi}_h \disc{\Psi}_h^\dagger \, \disc{f}_h[\disc{V}])_{X_{\grid{I}}^L}
= 0
\quad\text{for all } \disc{V} \in X,
\]
where \(\disc{f}_{h}[\disc{V}] \in X_{\grid{I}}^L\) is the first component (corresponding to height
variables) of \(\disc{f}[\disc{V}] = \Jdisc\, \delta^2 \widetilde{\Hdisc}\, \disc{P} \disc{V}\).
We note that \(\disc{\Psi}_h \disc{\Psi}^\dagger_h = \disc{y}\rho^\top /
\widehat{\disc{\rho}}\), from the concrete form of \(\disc{\Psi}^\dagger\).
Thus, it follows that 
\(\rho^\top \disc{\Psi}_h \disc{\Psi}^\dagger_h = \rho^\top\disc{y}\rho^\top /
\widehat{\disc{\rho}} = \rho^\top\), since \(\widehat{\disc{\rho}} = \rho^\top \disc{y}\).
Therefore, it indeed follows
\((\disc{\rho}, \disc{\Psi}_h \disc{\Psi}^\dagger_h \, \disc{f}_h[\disc{V}])_{X_{\grid{I}}^L}
 = (\disc{\rho}, \disc{f}_h[\disc{V}])_{X_{\grid{I}}^L} = 0\), due to the properties of \(\Jdisc\).
\end{proof}

\begin{remark}
Similarly, replacing the matrix \(R\) by the constant rank-one matrix
\((\disc{R}_i)_{k,l} = \widehat{\disc{\rho}}_i\) in every cell \(i\in \grid{I}\) results in an ETD
method which exactly conserves the total layer volume, defined as \(\sum_k (\disc{1},\disc{h}_k)_{\grid{I}}\).
\end{remark}

We note that this method incurs an additional approximation error. However, since the
approximation of \(R\) with a rank-one matrix is well justified, and the reduced system
can only capture the single fast mode contained in this space, we still expect good
properties from this linear operator in the context of an ETD-scheme.

\section{Numerical results}
\label{sec:results}
In this section, we numerically demonstrate the stability and performance of the ETD methods
described in this work. We first given an overview of the simulation setup, which is
based on a simplified version of the SOMA testcase~\cite{wolfram2015diagnosing}. 
The computational domain is given by a circular basin on the surface
of the sphere of radius \(6371.22\) km, centered at
\((35^\circ\text{N},0^\circ\text{W})\) longitude-latitude. The basin is
\(2500\) km in diameter, has a depth ranging from \(2.5\) km at the center to
\(100\) m on the coastal shelf. The concrete form of the bathymetry can be found in
\cite[Appendix~A]{wolfram2015diagnosing}; see also Figure~\ref{fig:layers}.
We consider a single-layer and a three-layer configuration.

\subsection{Algorithmic details}
\label{sec:alg_detail}

In the following, we detail the numerical setup employed in the computational experiments.

\subsubsection{Spatial mesh}
A quasi-uniform mesh is constructed from a centroidal Voronoi tessellation~\cite{Ringler2008},
where the distance of the cell centers \(d_e\) is ca.\ \(16\) km resolution. In order to
obtain an initial condition for the initial layer configuration, we interpolate the
initial heights as in Figure~\ref{fig:layers} to the cell centers. Then, all cell
variables in each layer that correspond to zero heights are marked as dry. Additionally,
all edges adjacent to a dry cell are marked as boundary edges. Subsequently, the degrees
of freedom corresponding to those cells and edges are fixed to zero and thus eliminated
from the computation; cf.\ also Appendix~\ref{app:TRiSK}.

In order to compare different time stepping methods at different
CFL-numbers, we introduce the reference time step and the Courant number. In this context,
we define it for simplicity as
\[
\dtC = 1/\abs{\disc{A}_0},
\quad \Cour(\dt) = \dt \abs{\disc{A}_0} = \dt/\dtC
\]
where \(\abs{\cdot} = \sigma_{\max}(\cdot)\) denotes the largest magnitude eigenvalue, and
\(\disc{A}_0 = \disc{F}'(\disc{V}^0)\) is the linearized operator at the stable reference
configuration. We note that \(\dtC\) is determined (up to a constant factor) by the largest
quotient of the local mesh-width and the local free-surface wave-speed
\(\sqrt{g \, b(x)}\).
Concretely, we obtain \(\dtC \approx 37.9\) [s] on the given domain, mesh, and bathymetry.

\subsubsection{Considered time stepping methods}
In the tests, the explicit fourth-order Runge-Kutta
method (RK4) serves as a base-line, since it is explicit (thus easy to efficiently
implement in a parallel environment), sufficiently high order accurate (in combination with
the second-order TRiSK scheme), and includes an
imaginary interval in its stability region. Specifically, stability of RK4 (for the
linearized equation \(\partial_t \disc{V} = \disc{A}_0 \disc{V}\)) is given for Courant numbers
\(\Cour(\dt) \leq \sqrt{8}\).
Thus, the maximal RK4 stepsize is given as \(\dt_{\text{RK4}} = \sqrt{8}\dtC \approx
107.2\)~[s] for the \(16\)~km grid, which is used as a reference time step for performance
considerations.
We remark that, in practice, a stable simulation is only obtained for slightly smaller Courant numbers, since the definition employed above ignores
the nonlinearity in the forcing term.
Concerning the choice of RK4 over lower order methods, we note that optimal order one and
two stage RK schemes are unconditionally unstable for imaginary
eigenvalues, and that RK4 delivers a better ratio of the number of internal stages to the
maximal CFL-compliant time step than RK3.

The ETD methods described in this work can be separated into two classes.
The first class of methods is constructed by choosing the linear operator as
\(\disc{A}_n = \reference{\disc{A}}\), as in~\eqref{eq:Abar}, linearized either at the reference
configuration \(\reference{\disc{V}} = (-\disc{b}, 0)\) or updated in each time step
with the current height \(\reference{\disc{V}}_n = (\disc{h}_n, 0)\).
Since \(\reference{\disc{A}}_n\) corresponds to a first-order
wave operator, this class of methods will be called ETD\(S\)wave, where
\(S\) refers to the number of internal stages.
The second class of methods are based on section~\ref{sec:mode_splitting}.
Within this class, we will focus on the methods where the linear
operator is projected onto the barotropic mode~\ref{eq:barotropic_ETD}.
For this reason we will refer to these methods as B-ETD\(S\)wave.

\subsubsection{Implementation of the Krylov methods}
\label{sec:impl_krylov}
For both classes of ETD methods, the Krylov subspace method from section~\ref{sec:krylov}
is used to evaluate the \(\phi_s\) functions. Because both classes of methods possess the
properties in Proposition~\ref{prop:skew}, the more efficient skew-Lanczos process, described in
section~\ref{sec:phi_comp}, is chosen over the Arnoldi process or IOM.

  The cost of evaluating the inner products is not significant, since the corresponding mass
  matrix is diagonal in the single-layer case and involves only a vertical translation between
  height and layer coordinates using the layer matrix \(T\) in the multilayer case.
  For each stage of the considered methods, matrix functions of
  \(\disc{A}_n\) need to be computed for additional right-hand sides. Specifically, in the
  second stage the right-hand side is \(\disc{b}_1 = \disc{F}[\disc{V}_n]\), 
  and in each subsequent stage the right-hand side \(\disc{b}_s =
  \disc{R}_n[\disc{v}_{n,s-1}]\) is introduced, where \(\disc{v}_{n,s-1}\) is the previous
  stage; cf.\ Appendix~\ref{app:exp_rk}. Additionally, possibly different matrix functions
  of \(\disc{A}_n\) need to be applied to \(\disc{b}_1,\ldots, \disc{b}_{s-1}\). Here, the previous Krylov
  spaces can be reused; only the matrix function of the Hessenberg matrix needs to be
  recomputed. In cases where additional Krylov vectors are required, the Arnoldi process
  can be continued. Thus, for each additional stage, effectively one
  additional skew-Lanczos process needs to be computed, and the matrix
  exponentials of the Hessenberg matrix and linear combinations
  in~\eqref{eq:Krylov_approx} need to be updated at most \(s-1\) times.

Additionally, we comment on the number of Krylov iterations per evaluation of a matrix
\(\phi\)-function.
The theoretical estimates (see, e.g., \cite[Section~4.2]{hochbruck2010exponential})
suggest that the required number of Krylov vectors 
effectively depends linearly on the Courant number, before an exponential rate of
convergence sets in.
In practice, we also employ the adaptive a~posteriori error criterion suggested in~\cite{Botchev2016}, based on~\cite{CELLEDONI1997365,druskin1998using}.
However, in the numerical experiments, we found that the convergence
behavior suggested by theory was sharp:
e.g., an error tolerance of \(10^{-6}\) was usually met after \(M \geq a_1 \Cour(\dt) + a_2\)
iterations, where appropriate constants \(a_1 \approx 1.25\), \(a_2 \approx 15\) were
determined empirically.
Moreover, using less than \(\Cour(\dt)\) Krylov iterations usually lead to completely
inaccurate solutions and even unstable simulations. This can be contrasted with an
approximation of the matrix \(\phi\)-function based on RK4 time stepping
using~\eqref{eq:phi_ode}, which requires at a minimum a number of
\(4/\sqrt{8} = \sqrt{2} \approx 1.41\)
matrix multiplications per unit time step (with \(\Cour(\dt) = 1\)), just to obtain basic
stability, and then converges at fourth order.

\subsection{Discussion of results}
In the first two test cases, obtained using the
single-layer configuration, the order of convergence and
energy conservation of the ETD\(S\)wave methods are investigated.
The third and final test case uses a three-layer configuration and a spin-up initial
condition (over a ten year horizon), and investigates the performance and accuracy of the
methods over a ten year simulation time, including additional forcing and biharmonic
smoothing terms.

\subsubsection{Single-layer scenario} \label{sec:SL_scen}
The first test scenario is used to
verify the accuracy and the energy conservation properties of the
ETD\(S\)wave methods. For simplicity, this scenario is implemented using the single-layer
configuration.
We consider an unforced problem either without or with a minimal amount of biharmonic
smoothing added to the problem.
We consider two initial conditions corresponding to fast and slow modes of the
single-layer equation, respectively.
The initial condition for the height \(h_0 = -b + \eta_0\) is a Gaussian
perturbation of the stable reference height with
\(\eta_0 = \bar{\eta} \, \exp\left( - (x-x_{\text{center}})^2 / (2\sigma^2)\right)\),
where the radius is \(\sigma = 200\) km, the total perturbation height is \(\bar{\eta} =
2\) m and \(x_{\text{center}}\) is the location at the center of the domain.

In the first case, this is combined with a zero initial velocity
\(u_0 = 0\). This then leads to a free-surface gravity wave emanating from the center of the
domain. Over the simulation horizon of six hours the wave spreads out from the center of
the domain, is reflected at the coastal boundaries, and roughly ends up back at the center of the domain.
In the second case, the initial height is chosen in the same way. However, now the initial
velocity is given as \(u_0 = (g/f(x_{\text{center}})) \, \hat{k} \cross \grad \eta_0\). This
choice ensures that \(\div u_0 = 0\) and the pressure gradient \(g\grad\eta_0\) balances the
Coriolis force \(f \hat{k} \cross u_0\), which is referred to as
\emph{geostrophic balance}. The
dynamics of this solution evolve on a slower time scale; a snapshot of the solution after
ten days is given in Figure~\ref{fig:single_layer}.
In the following, we will refer to the former as the \emph{gravity wave}, and to
the latter as the \emph{geostrophic} testcase.
\begin{figure}[!ht]
  \centering
  \includegraphics[width=0.32\linewidth,trim={.5cm 0cm, 1cm, 1cm},clip]{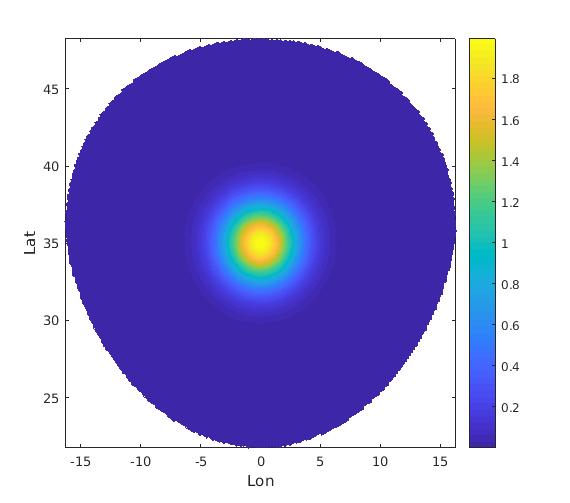}
  \includegraphics[width=0.32\linewidth,trim={.5cm 0cm, 1cm, 1cm},clip]{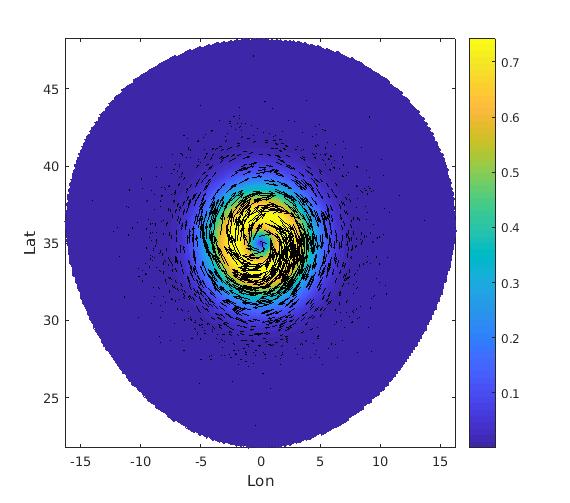}
  \includegraphics[width=0.32\linewidth,trim={.5cm 0cm, 1cm, 1cm},clip]{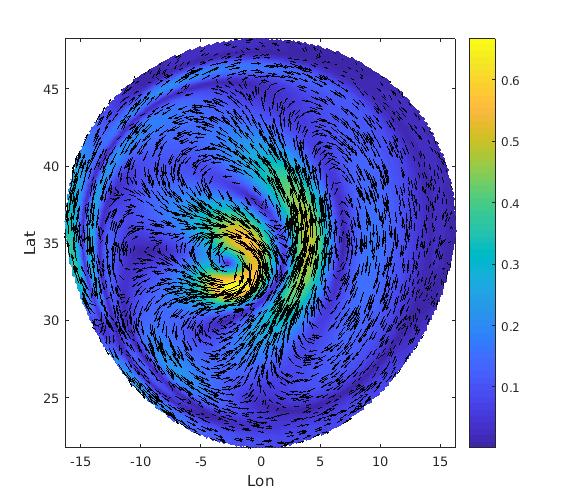}
  \caption{Snapshots of the initial sea surface height \(\eta_0 = h_0 + b\) [m],
      the initial velocity \(u_0\) [m/s] and the
    velocity after ten days of simulation time for the geostrophic testcase.
    Latitude and Longitude are in degrees.
}
  \label{fig:single_layer}
\end{figure}

\paragraph{Convergence test}
The errors of RK4 and various ETD\(S\)wave methods are computed using a reference solution
computed with RK4 using a time step size of \(\dt = (1/4)\dtC\).
The time step sizes for the tested methods are chosen as
\(\dt = 2^j \dtC\), \(j = 0,1,\dotsc,7\), and the two values \(\dt = 0.9 \dt_{\text{RK4}}\)
and \(\dt = 1.1\dt_{\text{RK4}}\) are added to verify the stability region of RK4.
Additionally, we consider the first-order ETDwave method, the
second-order ETD2wave method with \(c_2=1\) and \(c_2=2/3\) and the 
third-order ETD3wave method (detailed in Appendix~\ref{app:exp_rk}), where the
coefficients are chosen to be \((c_2,c_3)=(1/2,3/4)\).
We also consider the impact of using updated heights for the ETD2wave methods.
A small amount of biharmonic smoothing is added to the problem
(see Appendix~\ref{app:model_forcing}) with horizontal viscosity
\(\disc{\nu}_{\text{h}} = 2 \times 10^{9}\). 
Note that, for all experiments, we use a number of Krylov iterations set to
\(M = 1.3\,C(\dt) + 15\) (rounded to the nearest integer), which ensures an accurate
evaluation of the matrix \(\phi\)-functions; see section~\ref{sec:impl_krylov}.

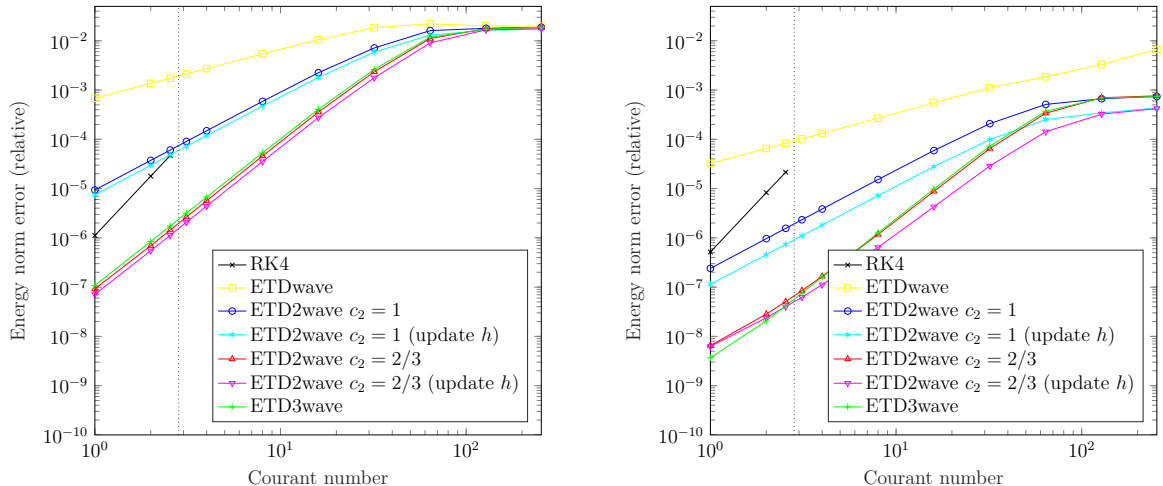
\begin{figure}[!ht]
  \centering
  \scalebox{.62}{
    \large
%
%
\definecolor{mycolor1}{rgb}{1.00000,1.00000,0.00000}%
\definecolor{mycolor2}{rgb}{0.00000,1.00000,1.00000}%
\definecolor{mycolor3}{rgb}{1.00000,0.00000,1.00000}%
\begin{tikzpicture}

\begin{axis}[%
width=3.73in,
height=3.608in,
at={(0.654in,0.487in)},
scale only axis,
unbounded coords=jump,
xmode=log,
xmin=1,
xmax=254,
xminorticks=true,
xlabel style={font=\color{white!15!black}},
xlabel={Courant number},
ymode=log,
ymin=1e-10,
ymax=0.05,
yminorticks=true,
ylabel style={font=\color{white!15!black}},
ylabel={Energy norm error (relative)},
axis background/.style={fill=white},
legend style={at={(0.97,0.03)}, anchor=south east, legend cell align=left, align=left, draw=white!15!black}
]
\addplot [color=black, mark=x, mark options={solid, black}]
  table[row sep=crcr]{%
1	1.1130668912532e-06\\
2	1.78710232585091e-05\\
2.54558441227157	4.68571041536303e-05\\
3.11126983722081	nan\\
4	nan\\
8	nan\\
16	nan\\
32	nan\\
64	nan\\
128	nan\\
254	nan\\
};
\addlegendentry{RK4}

\addplot [color=mycolor1, mark=square, mark options={solid, mycolor1}]
  table[row sep=crcr]{%
1	0.000682683322167987\\
2	0.00136549442024085\\
2.54558441227157	0.00173776809933797\\
3.11126983722081	0.00212340676589503\\
4	0.00272814395555221\\
8	0.00542164164314498\\
16	0.0105082929268486\\
32	0.0184963699748349\\
64	0.0220768657254146\\
128	0.0201136298483832\\
254	0.0196564503643306\\
};
\addlegendentry{ETDwave}

\addplot [color=blue, mark=o, mark options={solid, blue}]
  table[row sep=crcr]{%
1	9.35937214570486e-06\\
2	3.74158778774015e-05\\
2.54558441227157	6.05760530055885e-05\\
3.11126983722081	9.04199184450456e-05\\
4	0.000149205973285393\\
8	0.00058968943092643\\
16	0.00224652615617312\\
32	0.00715332501536734\\
64	0.0161055257054992\\
128	0.0179266945036315\\
254	0.0187623896545523\\
};
\addlegendentry{ETD2wave $c_2=1$}

\addplot [color=mycolor2, mark=asterisk, mark options={solid, mycolor2}]
  table[row sep=crcr]{%
1	7.46406920603577e-06\\
2	2.9837440721888e-05\\
2.54558441227157	4.83064010925268e-05\\
3.11126983722081	7.2106674764768e-05\\
4	0.000118995024332074\\
8	0.000470760468551887\\
16	0.00180246876278411\\
32	0.00590784831198182\\
64	0.0130825046359281\\
128	0.0163344956305759\\
254	0.0172266499069932\\
};
\addlegendentry{ETD2wave $c_2=1$ (update $h$)}

\addplot [color=red, mark=triangle, mark options={solid, red}]
  table[row sep=crcr]{%
1	9.19666345050595e-08\\
2	6.92679903492092e-07\\
2.54558441227157	1.43687927819158e-06\\
3.11126983722081	2.64198807112277e-06\\
4	5.66288808560738e-06\\
8	4.58684707472242e-05\\
16	0.000356636421959851\\
32	0.00235021550532163\\
64	0.0111022461361773\\
128	0.0175942100053575\\
254	0.0186714121339397\\
};
\addlegendentry{ETD2wave $c_2=2/3$}

\addplot [color=mycolor3, mark=triangle, mark options={solid, rotate=180, mycolor3}]
  table[row sep=crcr]{%
1	7.13949006268937e-08\\
2	5.46056897638229e-07\\
2.54558441227157	1.12874756394348e-06\\
3.11126983722081	2.06765294429802e-06\\
4	4.41076618010512e-06\\
8	3.53742797631863e-05\\
16	0.000274616464334762\\
32	0.00177259104067741\\
64	0.00907000918232822\\
128	0.0166809595703467\\
254	0.0175440375753625\\
};
\addlegendentry{ETD2wave $c_2=2/3$ (update $h$)}

\addplot [color=green, mark=+, mark options={solid, green}]
  table[row sep=crcr]{%
1	1.08012475560765e-07\\
2	8.46304022217722e-07\\
2.54558441227157	1.74134072874745e-06\\
3.11126983722081	3.17493698616283e-06\\
4	6.73309347407024e-06\\
8	5.32329463925399e-05\\
16	0.00040685258239176\\
32	0.00263446827248619\\
64	0.0118397624205946\\
128	0.0172403688226405\\
254	0.0188129904439507\\
};
\addlegendentry{ETD3wave}


\addplot [color=black, dotted, forget plot]
  table[row sep=crcr]{%
2.82842712474619	1e-12\\
2.82842712474619	1000\\
};
\end{axis}

\begin{axis}[%
width=4.844in,
height=4.427in,
at={(0in,0in)},
scale only axis,
xmin=0,
xmax=1,
ymin=0,
ymax=1,
axis line style={draw=none},
ticks=none,
axis x line*=bottom,
axis y line*=left,
legend style={legend cell align=left, align=left, draw=white!15!black}
]
\end{axis}
\end{tikzpicture}%
  }
  \scalebox{.62}{
    \large
%
%
\definecolor{mycolor1}{rgb}{1.00000,1.00000,0.00000}%
\definecolor{mycolor2}{rgb}{0.00000,1.00000,1.00000}%
\definecolor{mycolor3}{rgb}{1.00000,0.00000,1.00000}%
\begin{tikzpicture}

\begin{axis}[%
width=3.73in,
height=3.608in,
at={(0.654in,0.487in)},
scale only axis,
unbounded coords=jump,
xmode=log,
xmin=1,
xmax=254,
xminorticks=true,
xlabel style={font=\color{white!15!black}},
xlabel={Courant number},
ymode=log,
ymin=1e-10,
ymax=0.05,
yminorticks=true,
ylabel style={font=\color{white!15!black}},
ylabel={Energy norm error (relative)},
axis background/.style={fill=white},
legend style={at={(0.97,0.03)}, anchor=south east, legend cell align=left, align=left, draw=white!15!black}
]
\addplot [color=black, mark=x, mark options={solid, black}]
  table[row sep=crcr]{%
1	5.16683307384611e-07\\
2	8.26061339637053e-06\\
2.54558441227157	2.14613105766831e-05\\
3.11126983722081	nan\\
4	nan\\
8	nan\\
16	nan\\
32	nan\\
64	nan\\
128	nan\\
254	nan\\
};
\addlegendentry{RK4}

\addplot [color=mycolor1, mark=square, mark options={solid, mycolor1}]
  table[row sep=crcr]{%
1	3.24197139515344e-05\\
2	6.51514635001052e-05\\
2.54558441227157	8.31449771971315e-05\\
3.11126983722081	0.000101905452928549\\
4	0.000131593676094379\\
8	0.000268530152005977\\
16	0.000556072454562158\\
32	0.00110520281445259\\
64	0.00186207698494019\\
128	0.00330229930120925\\
254	0.00665587932864713\\
};
\addlegendentry{ETDwave}

\addplot [color=blue, mark=o, mark options={solid, blue}]
  table[row sep=crcr]{%
1	2.39629841116489e-07\\
2	9.62666136681801e-07\\
2.54558441227157	1.55982440752891e-06\\
3.11126983722081	2.32968277460833e-06\\
4	3.84775406104637e-06\\
8	1.52771564619764e-05\\
16	5.92417221442025e-05\\
32	0.00020818609824126\\
64	0.00051149135085352\\
128	0.000666899599423297\\
254	0.000741313115451426\\
};
\addlegendentry{ETD2wave $c_2=1$}

\addplot [color=mycolor2, mark=asterisk, mark options={solid, mycolor2}]
  table[row sep=crcr]{%
1	1.13915073454918e-07\\
2	4.55014549707169e-07\\
2.54558441227157	7.36679086853659e-07\\
3.11126983722081	1.09975848191236e-06\\
4	1.81555736699134e-06\\
8	7.20160722514074e-06\\
16	2.79250400512614e-05\\
32	9.88167922135174e-05\\
64	0.000251820090572374\\
128	0.000344112633432161\\
254	0.000434006306658002\\
};
\addlegendentry{ETD2wave $c_2=1$ (update $h$)}

\addplot [color=red, mark=triangle, mark options={solid, red}]
  table[row sep=crcr]{%
1	6.52086822755978e-09\\
2	2.8234603978186e-08\\
2.54558441227157	5.07518701300394e-08\\
3.11126983722081	8.47498488717458e-08\\
4	1.65569046972156e-07\\
8	1.16974700764299e-06\\
16	8.79165607950458e-06\\
32	6.49112921521106e-05\\
64	0.000338611311958805\\
128	0.000698604800997335\\
254	0.000764186805068507\\
};
\addlegendentry{ETD2wave $c_2=2/3$}

\addplot [color=mycolor3, mark=triangle, mark options={solid, rotate=180, mycolor3}]
  table[row sep=crcr]{%
1	6.2513402530729e-09\\
2	2.38430138921002e-08\\
2.54558441227157	3.99102793090503e-08\\
3.11126983722081	6.23093658852251e-08\\
4	1.11429004109383e-07\\
8	6.39229407593306e-07\\
16	4.27400891966051e-06\\
32	2.87255441126362e-05\\
64	0.000142688797847457\\
128	0.000323959244425019\\
254	0.000421105152006572\\
};
\addlegendentry{ETD2wave $c_2=2/3$ (update $h$)}

\addplot [color=green, mark=+, mark options={solid, green}]
  table[row sep=crcr]{%
1	3.67272158091893e-09\\
2	2.0603623223565e-08\\
2.54558441227157	4.16721251768563e-08\\
3.11126983722081	7.54872406188523e-08\\
4	1.59517191652178e-07\\
8	1.26102686529212e-06\\
16	9.80452051034822e-06\\
32	7.26848707280316e-05\\
64	0.000370030778358157\\
128	0.000688493106832089\\
254	0.00073730834423367\\
};
\addlegendentry{ETD3wave}

\addplot [color=black, dotted, forget plot]
  table[row sep=crcr]{%
2.82842712474619	1e-12\\
2.82842712474619	1000\\
};
\end{axis}

\begin{axis}[%
width=4.844in,
height=4.427in,
at={(0in,0in)},
scale only axis,
xmin=0,
xmax=1,
ymin=0,
ymax=1,
axis line style={draw=none},
ticks=none,
axis x line*=bottom,
axis y line*=left,
legend style={legend cell align=left, align=left, draw=white!15!black}
]
\end{axis}
\end{tikzpicture}%
  }
  \caption{The relative error in the solution (given in terms of the SSH
    \(\eta = h + b\) and the velocities \(u\)) at the final time in a (linearized) energy norm
    for various methods and Courant numbers. The vertical dotted line denotes the
    maximal stable time step for RK4 at Courant number $\sqrt{8}$.
    Left: for the gravity wave initial condition \((h_0,0)\)
    after six hours. Right: for the geostrophic initial condition \((h_0,u_0)\) after ten days.}
  \label{conv_test}
\end{figure}
To compare the methods, in Figure~\ref{conv_test} we show the relative solution error at
the final time in the discrete linearized energy norm as a function of the
Courant number.
The discrete linearized energy norm is induced by the mass matrix \(\disc{M}_H =
\disc{M}_{X^L}\delta^2\reference{\Hdisc}\) (cf.\ sections~\ref{sec:lin_and_modes}
and~\ref{sec:krylov}), and locally combines the weighted errors in heights and weights in
a way that more closely matches their contribution to the total energy (up to a linearization error).
We note that RK4 is unstable for time steps larger than $\dt_{\text{RK4}}$ (as predicted by
theory) but the ETD methods remain stable for all time steps considered.
Within the regions of stability, the methods exhibit the expected convergence order.
Only the ETD2wave method with \(c_2 = 2/3\) is noteworthy, since it
appears to have almost third-order convergence for a large regime of time step sizes.
Moreover, we note that all ETD methods deliver solutions that are accurate up to the
second significant digit for all time step sizes.
Surprisingly, some of the second- and third-order ETD methods are more accurate than RK4 at
the same time step size, despite being of lower order than RK4.

We can also notice that the ETD methods are more accurate for the geostrophic
testcase, whereas the accuracy of RK4 is similar in both cases. Moreover, differences can
be seen among the ETD methods:
In the gravity wave test-case the error of all methods is similar at a Courant number of
ca.\ \(100\). In the range \(1 \leq \dt \leq 50\) a clear benefit in accuracy can be seen for
the three stage method and ETD2wave with \(c_2 = 2/3\) over the second- and first-order
methods. Updating the reference height appears to only provide a marginal benefit in the
first test-case, which
can be attributed to the fact that the perturbation of the height compared to the stable
reference height changes appreciably over each time step, due to the fast free surface wave.
In contrast, for the geostrophic test-case, we observe an improvement due
to incorporating the current reference height \(\disc{h}_n\) into \(\disc{A}\). We note that this
improvement is also present for large Courant numbers.

\paragraph{Performance test}
From a practical point of view, the most interesting question is the performance of the methods.
Thus, we also plot the errors as a function of the wall clock time, measured in simulated
years per day (SYPD); see Figure~\ref{conv_test_SYPD}.
Here, the purely explicit RK4 method has an advantage over the ETD
methods at moderate time step sizes, since no \(\phi\)-functions need to be evaluated.
In fact, in the gravity wave testcase, we observe that RK4 outperforms the proposed ETD
methods in terms of accuracy in the range of SYPD that it can achieve. In the geostrophic
testcase, the ETD2wave methods with \(c_2 = 2/3\) outperform RK4, using fewer but larger, more
expensive time steps. In all cases, the maximal SYPD that can be achieved with
RK4 is bounded by the maximal time step, whereas the ETD methods achieve higher SYPD in the
large time step regime. However, we also notice that performance gains decrease at
higher Courant numbers. In the gravity wave testcase, the simulation horizon is very short,
so that SYPD actually decreases around Courant number 100. In the geostrophic case, where
the horizon is longer, SYPD continues to increase with larger time steps, but at Courant
number 200, the gain of doing fewer nonlinear forcing term evaluations per time interval
is increasingly balanced by the increased cost of computing the \(\phi\)-functions, since
the Krylov method requires a number of iterations that is roughly proportional to the
length of the time step.
Note that this also highlights that the performance gains of the ETD methods exploit the
fact that the cost of a matrix-vector product with \(\disc{A}_n\) is cheaper than an
evaluation of \(\disc{F}\). For Rosenbrock-ETD methods, this is more difficult to achieve, since
the Jacobian of every term in \(\disc{F}\) also needs to be included in the linearization.
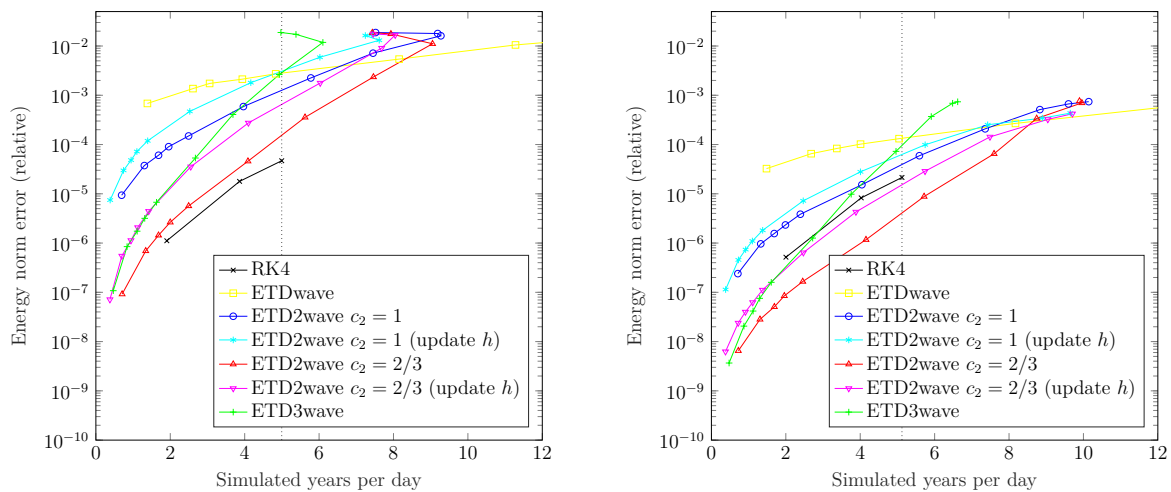
\begin{figure}[!ht]
  \centering
  \scalebox{.62}{
    \large
%
%
\definecolor{mycolor1}{rgb}{1.00000,1.00000,0.00000}%
\definecolor{mycolor2}{rgb}{0.00000,1.00000,1.00000}%
\definecolor{mycolor3}{rgb}{1.00000,0.00000,1.00000}%
\begin{tikzpicture}

\begin{axis}[%
width=3.73in,
height=3.608in,
at={(0.654in,0.487in)},
scale only axis,
unbounded coords=jump,
xmin=0,
xmax=12,
xlabel style={font=\color{white!15!black}},
xlabel={Simulated years per day},
ymode=log,
ymin=1e-10,
ymax=0.05,
yminorticks=true,
ylabel style={font=\color{white!15!black}},
ylabel={Energy norm error (relative)},
axis background/.style={fill=white},
legend style={at={(0.97,0.03)}, anchor=south east, legend cell align=left, align=left, draw=white!15!black}
]
\addplot [color=black, mark=x, mark options={solid, black}]
  table[row sep=crcr]{%
1.90717641701486	1.1130668912532e-06\\
3.86286036989887	1.78710232585091e-05\\
4.99534823228167	4.68571041536303e-05\\
0	nan\\
0	nan\\
0	nan\\
0	nan\\
0	nan\\
0	nan\\
0	nan\\
0	nan\\
};
\addlegendentry{RK4}

\addplot [color=mycolor1, mark=square, mark options={solid, mycolor1}]
  table[row sep=crcr]{%
1.38496341082798	0.000682683322167987\\
2.60988934873877	0.00136549442024085\\
3.05766706144027	0.00173776809933797\\
3.93712413844548	0.00212340676589503\\
4.83460445492282	0.00272814395555221\\
8.15107696623293	0.00542164164314498\\
11.2797008917849	0.0105082929268486\\
15.1924448618787	0.0184963699748349\\
16.6778829933855	0.0220768657254146\\
17.4160547772335	0.0201136298483832\\
14.8456750326891	0.0196564503643306\\
};
\addlegendentry{ETDwave}

\addplot [color=blue, mark=o, mark options={solid, blue}]
  table[row sep=crcr]{%
0.692907943912879	9.35937214570486e-06\\
1.30289086444394	3.74158778774015e-05\\
1.68455107555527	6.05760530055885e-05\\
1.95731215195125	9.04199184450456e-05\\
2.49226552221649	0.000149205973285393\\
3.96541960241734	0.00058968943092643\\
5.77823044030809	0.00224652615617312\\
7.45045371961048	0.00715332501536734\\
9.27422557656363	0.0161055257054992\\
9.18783350320262	0.0179266945036315\\
7.51924211915587	0.0187623896545523\\
};
\addlegendentry{ETD2wave $c_2=1$}

\addplot [color=mycolor2, mark=asterisk, mark options={solid, mycolor2}]
  table[row sep=crcr]{%
0.383920321040066	7.46406920603577e-06\\
0.743776906046719	2.9837440721888e-05\\
0.94672964844093	4.83064010925268e-05\\
1.10246042176273	7.2106674764768e-05\\
1.39277588574887	0.000118995024332074\\
2.52453501347824	0.000470760468551887\\
4.16446043449867	0.00180246876278411\\
6.02213079537248	0.00590784831198182\\
7.61683359224896	0.0130825046359281\\
7.24309905898847	0.0163344956305759\\
7.41803447953253	0.0172266499069932\\
};
\addlegendentry{ETD2wave $c_2=1$ (update $h$)}

\addplot [color=red, mark=triangle, mark options={solid, red}]
  table[row sep=crcr]{%
0.705443450924016	9.19666345050595e-08\\
1.34007403643518	6.92679903492092e-07\\
1.68402381567374	1.43687927819158e-06\\
1.9985709717909	2.64198807112277e-06\\
2.49445146515123	5.66288808560738e-06\\
4.08811931725095	4.58684707472242e-05\\
5.6221788740952	0.000356636421959851\\
7.4603543558486	0.00235021550532163\\
9.0496797630462	0.0111022461361773\\
7.93101564406421	0.0175942100053575\\
7.43185089498193	0.0186714121339397\\
};
\addlegendentry{ETD2wave $c_2=2/3$}

\addplot [color=mycolor3, mark=triangle, mark options={solid, rotate=180, mycolor3}]
  table[row sep=crcr]{%
0.377697045290808	7.13949006268937e-08\\
0.697908221691101	5.46056897638229e-07\\
0.943165762320435	1.12874756394348e-06\\
1.12405169437349	2.06765294429802e-06\\
1.41883022771622	4.41076618010512e-06\\
2.55039585606203	3.53742797631863e-05\\
4.10010129281262	0.000274616464334762\\
6.03082712567873	0.00177259104067741\\
7.68134024024512	0.00907000918232822\\
8.04620698263997	0.0166809595703467\\
7.45455505228508	0.0175440375753625\\
};
\addlegendentry{ETD2wave $c_2=2/3$ (update $h$)}

\addplot [color=green, mark=+, mark options={solid, green}]
  table[row sep=crcr]{%
0.467436482081467	1.08012475560765e-07\\
0.838433147917815	8.46304022217722e-07\\
1.10946718692792	1.74134072874745e-06\\
1.31313325571988	3.17493698616283e-06\\
1.63083783886262	6.73309347407024e-06\\
2.67543120423265	5.32329463925399e-05\\
3.68494310674314	0.00040685258239176\\
4.93029646153376	0.00263446827248619\\
6.10220212202045	0.0118397624205946\\
5.37898653189928	0.0172403688226405\\
4.9733267248566	0.0188129904439507\\
};
\addlegendentry{ETD3wave}


\addplot [color=black, dotted, forget plot]
  table[row sep=crcr]{%
4.99534823228167	1e-12\\
4.99534823228167	1000\\
};
\end{axis}

\begin{axis}[%
width=4.844in,
height=4.427in,
at={(0in,0in)},
scale only axis,
xmin=0,
xmax=1,
ymin=0,
ymax=1,
axis line style={draw=none},
ticks=none,
axis x line*=bottom,
axis y line*=left,
legend style={legend cell align=left, align=left, draw=white!15!black}
]
\end{axis}
\end{tikzpicture}%
  }
  \scalebox{.62}{
    \large
%
%
\definecolor{mycolor1}{rgb}{1.00000,1.00000,0.00000}%
\definecolor{mycolor2}{rgb}{0.00000,1.00000,1.00000}%
\definecolor{mycolor3}{rgb}{1.00000,0.00000,1.00000}%
\begin{tikzpicture}

\begin{axis}[%
width=3.73in,
height=3.608in,
at={(0.654in,0.487in)},
scale only axis,
unbounded coords=jump,
xmin=0,
xmax=12,
xlabel style={font=\color{white!15!black}},
xlabel={Simulated years per day},
ymode=log,
ymin=1e-10,
ymax=0.05,
yminorticks=true,
ylabel style={font=\color{white!15!black}},
ylabel={Energy norm error (relative)},
axis background/.style={fill=white},
legend style={at={(0.97,0.03)}, anchor=south east, legend cell align=left, align=left, draw=white!15!black}
]
\addplot [color=black, mark=x, mark options={solid, black}]
  table[row sep=crcr]{%
2.00346798552942	5.16683307384611e-07\\
4.02654496946949	8.26061339637053e-06\\
5.12198389050807	2.14613105766831e-05\\
0	nan\\
0	nan\\
0	nan\\
0	nan\\
0	nan\\
0	nan\\
0	nan\\
0	nan\\
};
\addlegendentry{RK4}

\addplot [color=mycolor1, mark=square, mark options={solid, mycolor1}]
  table[row sep=crcr]{%
1.48087342302122	3.24197139515344e-05\\
2.6815151259791	6.51514635001052e-05\\
3.37110015115205	8.31449771971315e-05\\
4.00852233449796	0.000101905452928549\\
5.04441201400524	0.000131593676094379\\
8.1726141872032	0.000268530152005977\\
12.0032577166531	0.000556072454562158\\
15.5098110280128	0.00110520281445259\\
17.3869196067863	0.00186207698494019\\
19.6517912818089	0.00330229930120925\\
20.6636887684274	0.00665587932864713\\
};
\addlegendentry{ETDwave}

\addplot [color=blue, mark=o, mark options={solid, blue}]
  table[row sep=crcr]{%
0.710967473094431	2.39629841116489e-07\\
1.32613589062929	9.62666136681801e-07\\
1.68863242792489	1.55982440752891e-06\\
1.98933535269789	2.32968277460833e-06\\
2.39063814957596	3.84775406104637e-06\\
4.04443758918098	1.52771564619764e-05\\
5.59047729028274	5.92417221442025e-05\\
7.35839596290006	0.00020818609824126\\
8.82797981663899	0.00051149135085352\\
9.59806511005769	0.000666899599423297\\
10.1420945598069	0.000741313115451426\\
};
\addlegendentry{ETD2wave $c_2=1$}

\addplot [color=mycolor2, mark=asterisk, mark options={solid, mycolor2}]
  table[row sep=crcr]{%
0.381957519881165	1.13915073454918e-07\\
0.726109715764083	4.55014549707169e-07\\
0.920312400756401	7.36679086853659e-07\\
1.1020432028545	1.09975848191236e-06\\
1.37798695493013	1.81555736699134e-06\\
2.46803102652344	7.20160722514074e-06\\
4.00577192306933	2.79250400512614e-05\\
5.74521223372703	9.88167922135174e-05\\
7.42549280979873	0.000251820090572374\\
8.89479047896517	0.000344112633432161\\
9.64809392418631	0.000434006306658002\\
};
\addlegendentry{ETD2wave $c_2=1$ (update $h$)}

\addplot [color=red, mark=triangle, mark options={solid, red}]
  table[row sep=crcr]{%
0.723862833981183	6.52086822755978e-09\\
1.30567759276112	2.8234603978186e-08\\
1.6918273419317	5.07518701300394e-08\\
1.95984718700065	8.47498488717458e-08\\
2.45478585569938	1.65569046972156e-07\\
4.15504594343838	1.16974700764299e-06\\
5.71664266507282	8.79165607950458e-06\\
7.59771752723946	6.49112921521106e-05\\
8.74337049687092	0.000338611311958805\\
9.95846140180755	0.000698604800997335\\
9.89418925833845	0.000764186805068507\\
};
\addlegendentry{ETD2wave $c_2=2/3$}

\addplot [color=mycolor3, mark=triangle, mark options={solid, rotate=180, mycolor3}]
  table[row sep=crcr]{%
0.378377945433783	6.2513402530729e-09\\
0.719694559559801	2.38430138921002e-08\\
0.91426018669977	3.99102793090503e-08\\
1.10894574790535	6.23093658852251e-08\\
1.37372008022878	1.11429004109383e-07\\
2.47228882487997	6.39229407593306e-07\\
3.88410595516147	4.27400891966051e-06\\
5.74223118870382	2.87255441126362e-05\\
7.48740988226097	0.000142688797847457\\
9.04305751798364	0.000323959244425019\\
9.70362391296063	0.000421105152006572\\
};
\addlegendentry{ETD2wave $c_2=2/3$ (update $h$)}

\addplot [color=green, mark=+, mark options={solid, green}]
  table[row sep=crcr]{%
0.475306085186422	3.67272158091893e-09\\
0.873580011219546	2.0603623223565e-08\\
1.12005300380265	4.16721251768563e-08\\
1.29785680363817	7.54872406188523e-08\\
1.60889429372496	1.59517191652178e-07\\
2.72168634692892	1.26102686529212e-06\\
3.75746432552211	9.80452051034822e-06\\
4.96189169605049	7.26848707280316e-05\\
5.90960317838513	0.000370030778358157\\
6.48346540358658	0.000688493106832089\\
6.6198201362176	0.00073730834423367\\
};
\addlegendentry{ETD3wave}

\addplot [color=black, dotted, forget plot]
  table[row sep=crcr]{%
5.12198389050807	1e-12\\
5.12198389050807	1000\\
};
\end{axis}

\begin{axis}[%
width=4.844in,
height=4.427in,
at={(0in,0in)},
scale only axis,
xmin=0,
xmax=1,
ymin=0,
ymax=1,
axis line style={draw=none},
ticks=none,
axis x line*=bottom,
axis y line*=left,
legend style={legend cell align=left, align=left, draw=white!15!black}
]
\end{axis}
\end{tikzpicture}%
  }
  \caption{
    The relative error in the solution (given in terms of the SSH
    \(\eta = h + b\) and the velocities \(u\)) at the final time in the (linearized) energy norm
    for various methods as a function of the wall clock time, measured in simulated years
    per day (SYPD). The vertical dotted line denotes the maximal SYPD that could be
    obtained with RK4. Left: for gravity wave initial condition \((h_0,0)\)
    after six hours. Right: for the geostrophic initial condition \((h_0,u_0)\) after ten days.
  }
  \label{conv_test_SYPD}
\end{figure}

\paragraph{Artificial dissipation test}
The next test focuses on energy conservation in the
ETD methods and the effect of the artificial numerical dissipation described in
section~\ref{sec:stabilization}. In particular, we investigate its effect on the total
energy. Concretely, we fix the parameter \(p=2\), and consider different values of the
spectral cut-off parameter \(\gamma\). Again, we use the initial condition from
Figure~\ref{fig:single_layer}, which is in geostrophic balance.
First, the evolution of the energy from a simulation using RK4 close
to the maximal time step \(\dt_{\text{RK4}} = \sqrt{8}\dtC\),
is compared to the energy obtained using ETD with
various \(\gamma\) values. The time step size for ETD2wave is chosen as
\(\dt = 10\dt_{\text{RK4}}\) (using the reference heights, \(c_1 = 1\), and \(M=45\)).
The values \(\gamma \in \{5,10,20,30\}\) are employed for ETD2wave, and also the
unmodified case without artificial dissipation is considered.
Secondly, we repeat the same test, but add a biharmonic smoothing to the model (see
Appendix~\ref{app:model_forcing}) with horizontal viscosity \(\disc{\nu}_{\text{h}} = 2
\times 10^{10}\). This is motivated by the fact that the same term will be included in the decade
long simulations in section~\ref{sec:ML_scen}. There, it provides a necessary turbulence
closure, which prevents an unphysical build-up of vorticity in the finest grid cells. We
note that the concrete viscosity value for this grid resolution is taken
from~\cite{wolfram2015diagnosing}.

\begin{figure}[!ht]
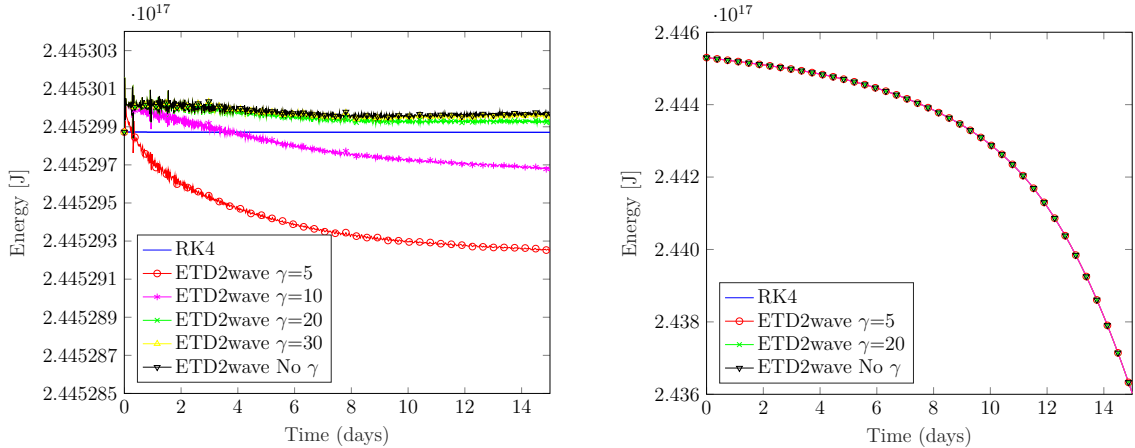

  \centering
  \scalebox{.63}{
    \large
    \input{figures/Energy_tests_nobi_cor.tikz}
  }
  \scalebox{.63}{
    \large
    \input{figures/Energy_tests_bi_cor.tikz}
  }
  \caption{Energy~\eqref{eq:ham_ml} for RK4 at Courant number
    \(\Cour(\dt) = (3/4)\sqrt{8} \approx 2.1\)
    and ETD at \(\Cour(\dt) = 10\sqrt{8} \approx 28.3\) for varying \(\gamma\) values
    over a time horizon of fifteen days. Left, without biharmonic smoothing. Right, with
    biharmonic smoothing.} 
  \label{fig:nobi_test}
\end{figure}
In Figure~\ref{fig:nobi_test} we plot the evolution of the energy for all methods.
Note that, in the case of additional biharmonic smoothing, the curves visually coincide.
This show that the energy dissipating effect
of the biharmonic viscosity is stronger than either the
time discretization error or the artificial numerical diffusion. 
Concerning the case without biharmonic smoothing,
we observe that the ETD methods are affected by a larger
time discretization error than RK4.
This is not surprising, due to the much larger time step employed by these methods.
Concerning the influence of \(\gamma\), we observe that
for the largest value of \(\gamma = 30\) the energy is barely affected, which is explained
by the fact that \(\gamma\) is bigger than the Courant number \(\Cour(\dt) \approx 28.3\).
For smaller \(\gamma\), there is an
increasing effect on the energy, which tends to be dissipative on average.
However, we note that only for the smallest value of
\(\gamma\), the effect of the artificial dissipation is noticeably larger than the
time discretization error.

\subsubsection{Multilayer scenario}
\label{sec:ML_scen}
The second scenario is used to investigate the long term stability and accuracy of the
methods over simulation horizons of decades. The scenario tries to represent a
realistic simulation in the context of climate studies and, in addition to the bathymetry,
shares the same forcing and smoothing terms as the SOMA test case in
\cite[Appendix~A]{wolfram2015diagnosing}.
The wind stress \(\tau_\lambda\) is in the easterly direction in
the center of the domain in the westerly at the top and bottom of the domain. This induces a
double-gyre mean circulation pattern. To extract energy from the system, a quadratic bottom drag
with coefficient \(c_{\text{drag}} = 10^{-3}\) is added.
Also, a vertical Laplacian is implemented
such that the bottom drag term can be interpreted as a Robin-like
bottom boundary condition.
The concrete form of these terms is given in Appendix~\ref{app:model_forcing}.
The horizontal and vertical viscosities are set to
\(\disc{\nu}_{\text{h}} = 2 \times 10^{10}\) and \(\disc{\nu}_{\text{v}} = 10^{-4}\),
respectively (which are the values given in~\cite{wolfram2015diagnosing} for the 16 km grid).
The three layer configuration for this scenario has initial layer interfaces
located at \(\eta^0_{1} = 0\), \(\eta^0_2 = - 25/3\), and \(\eta_3^0 = - 50/3\) [km].
This evenly distributed layer
configuration is chosen to avoid the possible out-cropping of layers (which
refers to the vanishing of one of the layer heights on some part of the domain), which could
lead to a breakdown of the simulation.
The layer densities are set to \(\rho = (1025,1027,1028)\)~[kg~m\(^{-3}\)].

The initial condition is obtained from a ten year spin-up simulation initiated at the
resting state and using RK4 with a time step of
\(\dt = (3/4)\dt_{\text{RK4}}\). The SSH and top layer velocity of the
resulting spun-up initial condition are shown in Figure~\ref{IC_soma}.
\begin{figure}[!ht]
  \centering
  \includegraphics[width=0.48\linewidth,trim={1cm 0cm, 1cm, 1cm},clip]{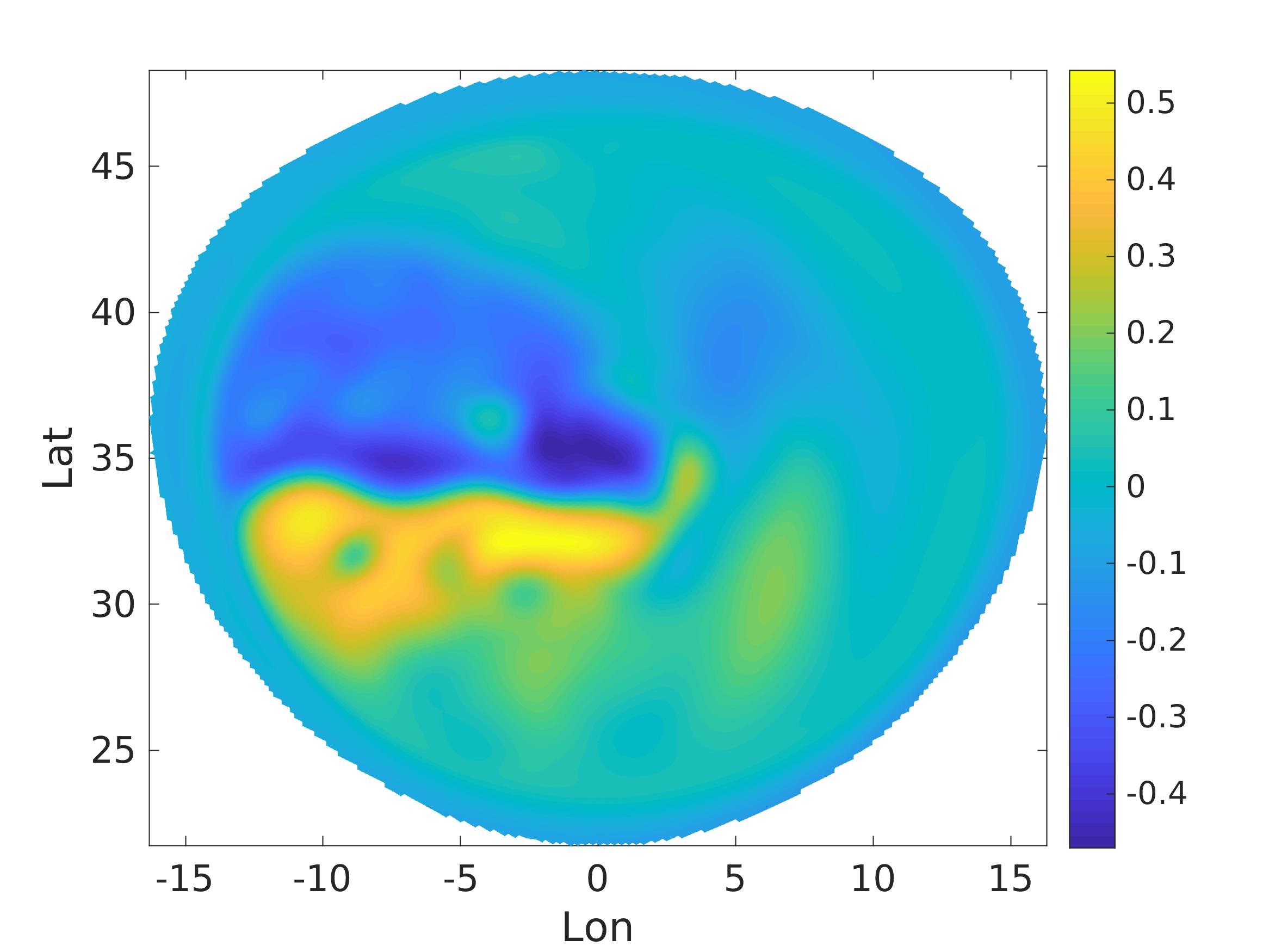}
  \includegraphics[width=0.48\linewidth,trim={1cm 0cm, 1cm, 1cm},clip]{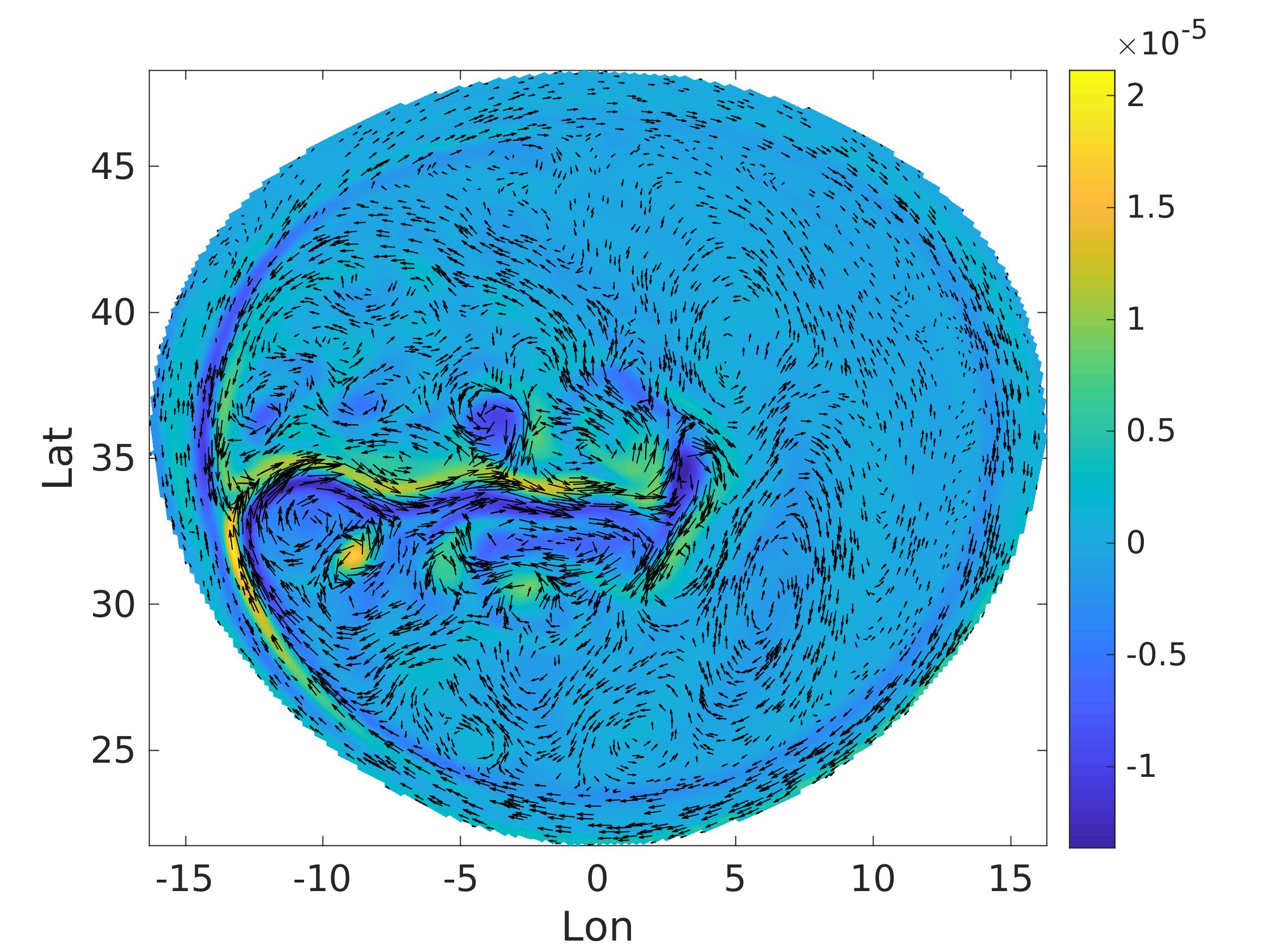}
  \caption{Spin-up initial condition: Left, SSH. Right, velocity vectors [m/s] and
    relative vorticity \(\hat{k}\cdot\curl{u}\) [1/s] (density plot) after ten years in the top
    layer. Latitude and Longitude are in degrees.}
  \label{IC_soma}
\end{figure}
This process ensures that the system is in dynamic equilibrium, which means that the
long-term statistics, such as the mean flow or the root mean square (RMS) of the sea
surface height (SSH), have coherent structure. This
is important since, over time horizons of years and longer, it is expected that the trajectories
computed with different methods will drift apart. Thus, a comparison of instantaneous
values of the solution becomes meaningless, and only the behavior of the
long-term statistics can be used to assess the quality of the different time discretization
methods. A second motivation for evaluating
solution statistics is that climate-ocean models are concerned with long time scale changes,
not instantaneous phenomena. Therefore, a method's ability to accurately predict these long-term
statistics is important.

\paragraph{Results}
We consider a simulation starting from the spin-up initial condition over the horizon of
ten simulation years.
We employ ETD2wave, B-ETD2wave, and B-ETD3wave (using the reference heights and \(c_1 = 1\)) with
time steps increased above the maximal RK4 time step \(\dt_{\text{RK4}} = \sqrt{8}\dtC \approx
107.2\) [s] for the \(16\) km grid. For ETDwave we were not able to obtain stable
simulations in any configuration.
For ETD2wave the time step is increased \(10\) and \(15\) times, for B-ETD2wave \(5\)
and \(7\) times, and for B-ETD3wave \(10\) and \(12.5\) times over \(\dt_{\text{RK4}}\).
Additionally, to avoid spurious high-frequency oscillations, the artificial dissipation
from section~\ref{sec:stabilization} is employed, using \(\gamma = 20\) and \(p=2\).
We note that the larger time step for each ETD method reflects the largest
time step that was empirically found to be stable over the entire time horizon in combination with a
value of \(\gamma = 20\).

For the purposes of comparison, two additional simulations are performed with RK4
at \(1/4\) and \(3/4\) the maximal time step, respectively.
We note that global mass is conserved up to machine precision over the whole
simulation horizon for all considered methods, as predicted by theory.
Due to the wind forcing and smoothing, we can not expect conservation of energy.
The global energy evolution for all methods is given in Figure~\ref{SOMA_EN_mass}.
\begin{figure}[!ht]
  \centering
  \scalebox{.63}{
    \large
    \input{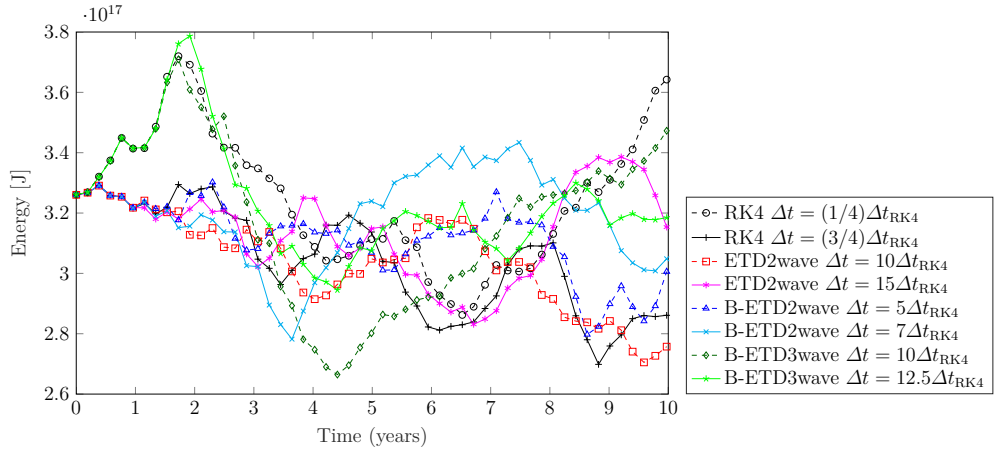}
  }
  \caption{Global energy evolution over ten years in the SOMA simulations for different methods.}
  \label{SOMA_EN_mass}
\end{figure}
From this, it is evident that the solutions
differ significantly after the first years of simulation time.
Therefore, we consider the statistical quantities mean flow and SSH RMS (to be precise, we
compute the RMS of the deviation of the SSH from its temporal mean, which corresponds to the
statistical variance).
The mean and variance are approximated by the statistical mean and the sample
variance, with snapshots taken every two weeks.
The velocity and vorticity of the top layer
mean flow, and the SSH RMS are shown in Figure~\ref{fig:SOMA_RK4}, which are computed
from the RK4 simulation.
\begin{figure}[!ht]
  \centering
  \includegraphics[width=0.48\linewidth,trim={1cm 0cm, 1cm, 1cm},clip]{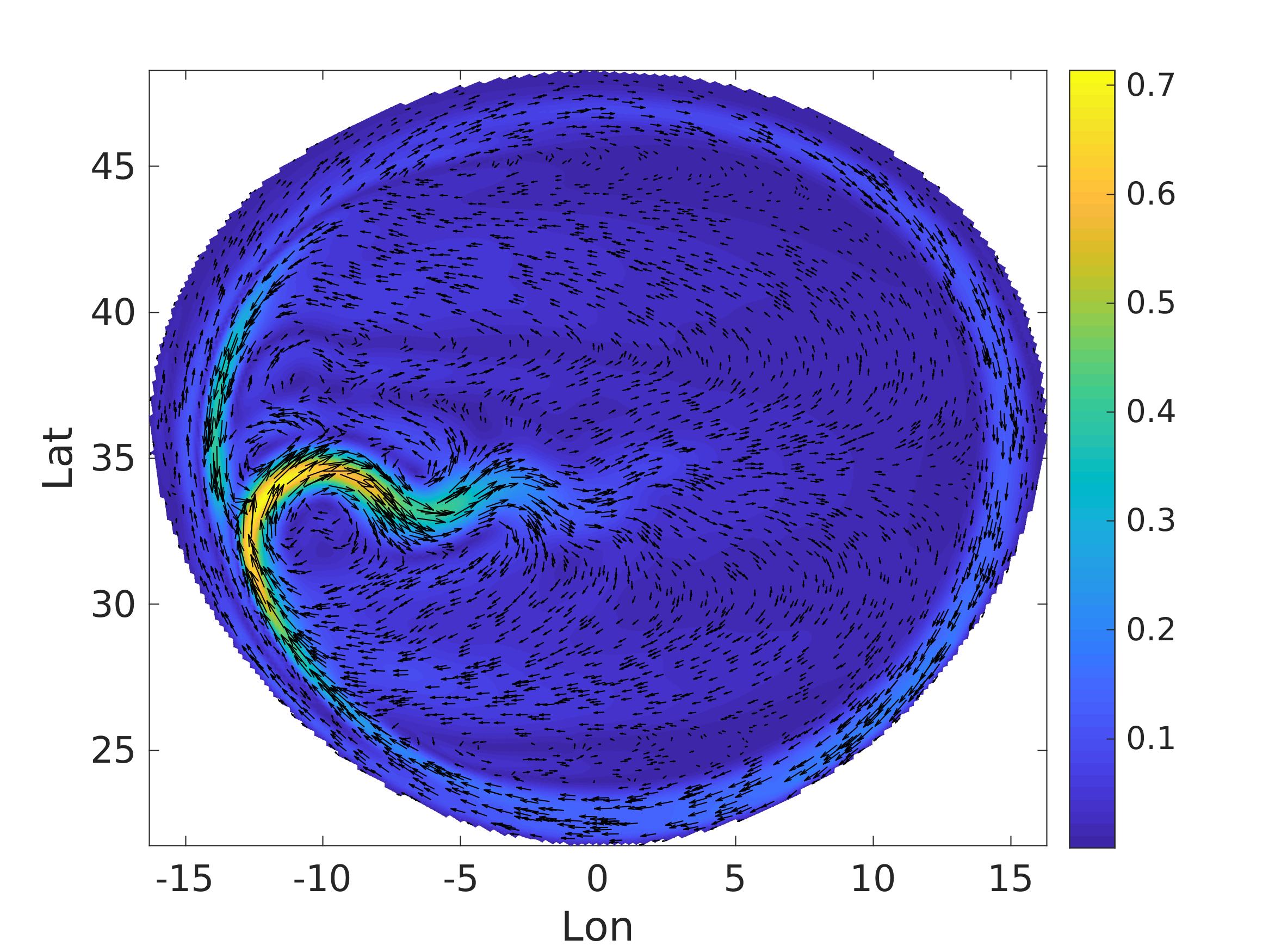}
  \includegraphics[width=0.48\linewidth,trim={1cm 0cm, 1cm, 1cm},clip]{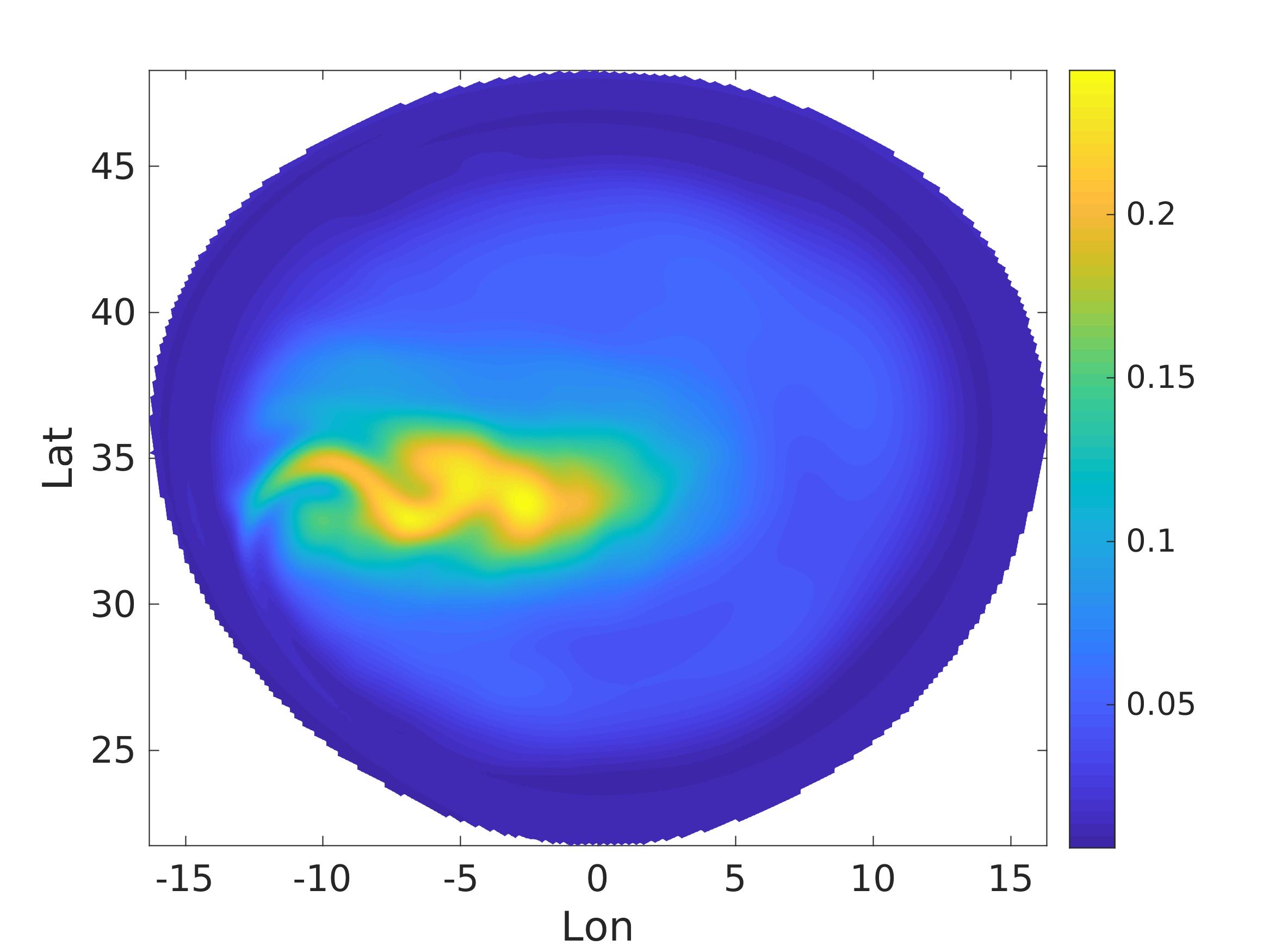}
  \caption{SOMA simulation statistics computed with RK4:
    The mean flow of the velocity, vectors and magnitude [m/s] (left) and the RMS of the
    SSH [m] (right) computed as the sample mean and variance with snapshots of the
    solution taken every two weeks over ten years.
  }
  \label{fig:SOMA_RK4}
\end{figure}
Concretely, we compute the relative reference-thickness weighted \(L^2\) error of the mean
flow and the \(L^\infty\) error of the SSH RMS, which are given in Table~\ref{SOMA_perm}.
Here, we define the reference-thickness weighted \(L^2_{\reference{h}}\) norm for any
velocity profile \(u\) by the square-root of
\(\sum_{k=1,\ldots,L} \int_{\Omega}\reference{h}_k u^2\), which roughly corresponds to the physical
\(L^2\) norm of the underlying three-dimensional velocity field.

Comparing the error in these quantities for each method, using the small time step RK4
solution as a base-line, we find that the results of the methods differ very little.
We observe that all methods (including the RK4 simulation close to the CFL) reproduce the
mean flow up to a similar tolerance of around \(2\%\) to~\(12\%\).
This suggests that the bulk of the error is caused
by replacing the true mean value by a sample average of an effectively random trajectory on a finite
interval and not by the employed time discretization method.
Concerning the maximum error in the SSH RMS, which appears to be a more sensitive criterion,
the methods are reproduce the reference value up to ca.\ \(5\) to \(9\) cm, and
thus range from ca.\ \(20\%\) to \(35\%\) relative accuracy.
However, the accuracy is only slightly affected by
the larger step-sizes, which suggests that all methods reproduce the chosen statistical
quantities similarly accurate in this test.
Moreover, the error does not necessarily increase with larger time steps for
each method, which indicates that much of it could be attributed to statistical
effects caused by the effectively random trajectories and sampling error introduced by
approximating mean and variance by a finite number of samples.
\begin{table}[!ht]
\begin{center}
\begin{tabular}{ l c c c c c c c c}
  \toprule        
  Method & $\dt/\dt_{\text{RK4}}$ & \(\Cour(\dt)\) & \(M\) & SYPD & mean-flow (\(L^2_{\reference{h}}\) rel.)&SSH RMS (\(L^\infty\))  \\
  \midrule
  RK4         & 3/4  & 2.12 & -- & 0.911 & 0.0542 & 0.0541 \\
  ETD2wave    & 10   & 28.3 & 45 & 2.527 & 0.0598 & 0.0510 \\
  ETD2wave    & 15   & 42.4 & 63 & 2.771 & 0.0437 & 0.0603 \\
  B-ETD2wave  & 5    & 14.1 & 28 & 4.051 & 0.0462 & 0.0794 \\
  B-ETD2wave  & 7    & 19.5 & 35 & 4.609 & 0.0370 & 0.0714 \\
  B-ETD3wave  & 10   & 28.3 & 48 & 3.583 & 0.1372 & 0.0864 \\
  B-ETD3wave  & 12.5 & 35.4 & 56 & 4.204 & 0.0206 & 0.0716 \\
  \bottomrule
\end{tabular}
\end{center}
\vspace{-1em}
\caption{Comparison of the ETD methods against the base-line RK4 method: the Courant number,
  the number of Krylov vectors \(M\), the average simulated years per real day (SYPD),
  the relative mean velocity error in the \(L^2_{\reference{h}}\) norm, and the SSH RMS
  error in the \(L^\infty\) norm. Reference values are computed with using RK4
    with \(\dt = (1/4)\dt_{\text{RK4}}\).}
\label{SOMA_perm}
\end{table}
With regards to performance, Table~\ref{SOMA_perm} also shows the simulated years per real day of the various
ETD methods and RK4. In all cases, ETD outperforms RK4 in terms of wall times. Moreover, the
B-ETD2wave method outperforms the ETD2wave method, although it is forced to use a smaller time
step, which is due to the cheaper \(\phi\)-function computations.
Moreover, we note that the barotropic method with three stages is able to take
time steps of similar magnitude as ETD2wave, in contrast to the barotropic second order
version. We attribute this to the fact that B-ETD2wave is based on RK2, which is not
stable on the imaginary axis, but still has to resolve the baroclinic waves contained in
the remainder. On the
other hand, B-ETD3wave is based on RK3, which includes an imaginary interval in its
stability region.
This, combined with the fact that the cost of additional stages
decreases for the barotropic method in configuration with more than three layers, will
likely make it (or even higher order methods) favorable for configurations with more
layers.

\section{Conclusion}
In this paper, we have developed ETD methods that can take big time steps for
the multilayer shallow water equations and deliver sufficiently accurate solutions at a
reduced cost compared to explicit methods. We have based this development specifically on the
spatial TRiSK-scheme, but it applies directly to any scheme based on a
Hamiltonian framework. More generally, it should be applicable to any scheme that
conserves a discrete energy. This also includes classical finite difference/finite
volume schemes on structured quad-meshes, used in ocean modeling.

In the following, we address the further steps that are
needed to use these methods in order to improve current ocean models running on massively parallel
architectures.
Most currently employed ocean models use a
splitting of the dynamics into a fast free-surface equation and a slow remainder, which
usually speeds up simulations by an order of magnitude. This is achieved by solving the
free-surface equation either with explicit or implicit time stepping methods.
ETD methods are an attractive substitute, since they restrict the fast dynamics to be linear, and
allow for different approximation methods, such as polynomial or rational Krylov methods.
Moreover, methods of high order are available.
In this paper we have only considered problems with up to three layers, such that
the potential for computational speed-up exploiting the structure of
the barotropic method was limited. Clearly, if more layers are added, the computational
effort for the barotropic methods is going to decrease relative to RK4,
since the reduction in size of the reduced linear operator is increased, and the
amount of work to compute the \(\phi_s\) functions remains independent of the number of
layers.
In such situations, larger time steps, higher order methods, and methods incorporating also the first
baroclinic mode into the linear operator, which we did not consider in our numerics, may
become increasingly competitive.

In order to reliably consider situations with more layers, it will be essential to make
the model more realistic. For instance, the out-cropping of internal layers (layer heights going to
zero) can no longer be avoided for thin layers. This can be addressed by leaving the
isopycnal reference
frame and considering the primitive equations in an arbitrary Lagrangian vertical
coordinate system together with tracer equations (e.g., for temperature and salinity) and
an equation of state.
In the future, we aim to extend the methods to this case. Certainly, the
development of monolithic ETD methods for the combined set of equations is desirable, but
preliminary versions can be based upon an operator splitting into isopycnal dynamics
and separate tracer plus ALE updates, which allow a more direct use of the developed methods.
Moreover, the ETD methods should include an appropriate treatment of
the tracer equations, and be able to scale to solve a large number of such equations
efficiently. Here, exponential methods can also enable larger time steps, and a
pre-computation of \(\phi\)-functions for vertical transport may enable further efficiency.

Additional challenges arise due to the necessity of implementing these methods in
massively parallel environments. Concerning the ETD
methods proposed here, we first must note that Krylov methods require a ``reduce-all''
communication step in every iteration, which can be inefficient on certain parallel
architectures. Here, other iterative approaches such as an approximation of the matrix
functions using Chebyshev polynomials may be used instead, to avoid global communication.
An additional problem of the iterative
solution of the fast equation, which also plagues split-explicit methods, are the frequent
communications with small message size and only a small number of floating point
operations in between. This makes overlapping domain
decomposition methods a promising alternative, due to the finite speed of propagation of
the free surface waves. In this context, exponential methods can additionally exploit the
linearity of the propagation matrices and the recursive relations between exponentials
associated to time intervals of different length.
In terms of incorporating ETD methods in existing computational ocean
models, one may also consider them as a drop-in replacement for the single-layer barotropic
solver. Here, the speed-up can not come from the layer reduction, but instead must come from
the computational advantages of matrix exponentials of the linear operator over the
existing implicit or explicit solution procedure.

Finally, global ocean models are expected to use increasingly nonuniform meshes of higher
resolution near coastal boundaries, in order to more accurately resolve local features and
interface with coastal/estuary models. As long as the bathymetry is sufficiently deep such
that linear waves are still sufficiently faster than the advection, ETD methods using a
global time step may still be effective. However, as the CFL requirements of the fast
waves become less restrictive and the advective CFL becomes more so near the coast,
smaller explicit time steps may be more beneficial. Here, the development of ETD methods
that can take different time steps in different parts of the domain may provide a natural
way of realizing that, since they degrade smoothly to explicit methods for smaller time
steps.

\section*{Acknowledgments}

The authors gratefully acknowledge funding by the US Department of Energy Office of Science
grant DE-SC0016591.




%
%

\appendix
\addcontentsline{toc}{section}{\textbf{Appendices}}
\renewcommand{\thesection}{\Alph{section}}

\section{TRiSK operators}
\label{app:TRiSK}

In order to implement the TRiSK scheme several discrete operators are required for the
differential operators and for averaging quantities from one grid location to another. In
total there are eight operators required for the scheme that are built using the
connectivity relations defined in \cite[p.~6, table~2]{ringler2013multi}. The operators
consist of the divergence $\Ddiv$, the gradient
$\Dgrad$, the gradient in the perpendicular direction
$\Dgradp$, the scalar curl
$\Dcurl$, the (perpendicular) flux reconstruction
operator $\Dkcross$, and the interpolation operators.

The definition of flux reconstruction operator
$\Dkcross$ given in \cite{thuburn2012framework}, is a necessary condition for geostrophic
balance. The ensures that Coriolis force and pressure gradient balance each other to
maintain divergent free flows under the correct conditions. For a given Delaunay
triangulation (and its CVT) on the sphere, along with the normals $n_{e,i}$ (elements of
the matrix $\disc{N}$), tangents $t_{e,v}$ (elements of the matrix $\disc{T}$), kite areas
(the intersections between triangles) $R_{i,v} A_i$ (elements of the matrix $\vb{R}$), the
weights on edge array $w_{e,e'}$ (elements of the matrix
$\disc{W}$) are then defined as quantities satisfying
the relation 
\begin{equation}\label{weight_def}
-\disc{T}\disc{W} = \disc{R}\disc{N}.
\end{equation}
We note that the presented identities are only valid for a domain which has no boundary,
e.g., the full sphere.

In order to obtain discrete operators on \(\Omega \subset \mathbb{S}^2\), we follow the
procedure used in the MPAS-O software and restrict a spherical mesh to a
subset of the cells, which eliminates the ``dry'' cells of zero layer height.
Note, that this results only in a first-order accurate resolution of
the boundary. We obtain a discretization of the model on the bounded domain
by fixing all velocity variables stored in edges adjacent
to at least one ``dry'' cell to zero, which conveniently incorporates the no flux boundary
conditions. However, since the edge-tangential velocity is reconstructed from the
edge-normal velocity,
and this reconstruction takes into account the zero edge
velocities in boundary and ``dry'' edges,
this implementation effectively introduces a full no-slip condition for the
velocity. Thus, it is essential to employ additional diffusion terms such as (bi-)harmonic
closure to obtain a mathematically meaningful model,
since the shallow water equations are over-specified with no-slip conditions.

\begin{table}
\begin{center}
\begin{tabular}{l r c l}
\toprule
 Divergence: &$(\Ddiv \disc{y})_e$ & $=$ &$(1/A_i)\sum_{e\in \grid{EoI}(i)} n_{e,i} l_e \disc{y}_e$\\
 Gradient: & $(\Dgrad \disc{y})_i$ & $=$ & $(1/d_e)\sum_{i\in \grid{IoE}(e)} - n_{e,i} \disc{y}_i$\\
 Curl: &$(\Dcurl\disc{y})_v$ & $=$ & $({1}/{A_v})\sum_{e\in \grid{EoV}(v)} t_{e,v} d_e \disc{y}_e $\\
 Perpendicular Gradient: &$(\Dgradp \disc{y})_e$ & $=$ & $({1}/{l_e})\sum_{v\in \grid{VoE}(e)}t_{e,v}\disc{y}_v$\\
 Perpendicular Flux: &$(\Dkcross \disc{y})_e$ & $=$ & $({1}/{d_e})\sum_{e'\in \grid{EoE}(e)}w_{e,e'}l_e\disc{y}_{e'}$\\
 Cell to Vertex interpolation: & $\inp{\disc{y}}_{\grid{V},v}$ & $=$ & $({1}/{A_v})\sum_{i\in \grid{CoV}(v)}R_{i,v}A_i\disc{y}_i$\\
 Vertex to Edge interpolation: &$\inp{\disc{y}}_{\grid{E},e}$ & $=$ & $\sum_{v\in \grid{VoE}(e)}\disc{y}_v/2$\\
 Edge to Cell interpolation: &$\inp{\disc{y}}_{\grid{I},i}$ & $=$ & $({1}/{A_i})\sum_{e\in \grid{EoI}(i)}\disc{y}_eA_e/2$\\
 Cell to Edge interpolation: &$\inp{\disc{y}}_{\grid{E},e}$ & $=$ & $\sum_{i\in \grid{IoE}(e)} \disc{y}_i/2$\\
\bottomrule
\end{tabular}
\end{center}
\caption{The discrete operators given concretely in terms of geometrical quantities.
  $d_e$ and $l_e$ denote the distances between the cell centers and cell
  vertices, respectively. $A_v$ and $A_i$ are the triangle and cell areas, respectively.
  $R_{i,v}$ are the kite-areas, the intersection of a primal
  and dual grid cell divided by the cell area. $w_{e,e'}$ are the
  edge-weights from~\eqref{weight_def}. The index
  sets in the summation correspond to geometrical connectivity arrays~\cite{ringler2010unified}.}
\label{op_tab}
\end{table}

\subsection{Choices of the forcing term}
\label{app:model_forcing}

In wind-driven circulation, energy is injected at the ocean surface by a source term in
the momentum equation. Concretely, the forcing term can be implemented as
\[
\disc{f}_{\text{wind}}[\disc{h}] = 
\chi_{k=1} \, \disc{\tau}_{\text{wind}} / \inp{\rho_1\disc{h}_1}_{\grid{E}},
\]
where \(\disc{\tau}_{\text{wind},e} \approx n_e \vdot \tau_{\text{wind}}(x_e)\) is a edgewise
approximation to the continuous wind profile, and the characteristic function
\(\chi_{k=1} \in \{\,0,1\,\}\) ensures that wind forcing is only applied in the top layer.

Energy is typically extracted in the bottom layer, by a drag term that represents
interaction of the flow with the (rough) bottom topography. A classical choice for this
term is
\[
\disc{f}_{\text{drag}}[\disc{h},\disc{u}] = 
- \chi_{k=l_{\text{bot}}} \, c_{\text{drag}} \frac{\inp{\sqrt{2 \disc{K}[\disc{u}_{l_{\text{bot}}}]}}_{\grid{E}}}{\rho_{l_{\text{bot}}}\inp{\disc{h}_{l_{\text{bot}}}}_{\grid{E}}} \ast \disc{u}_{l_{\text{bot}}},
\]
where \(l_{\text{bot}}\) is the bottom layer index. This corresponds to a quadratic drag term
\(- c_{\text{drag}} \abs{u_{l_{\text{bot}}}} u_{l_{\text{bot}}} /h_{l_{\text{bot}}}\) in the
continuous equation.

Due to the massive length scales relevant for global ocean modeling and the relatively
coarse discretization, physical viscosities in the momentum equation are usually
negligible. However, in order to account for the energy dissipated in scales below the
grid resolution (due to turbulence), and to prevent a build-up of vorticity in finest grid
scales, numerical dissipation terms have to be introduced to the discrete equation. Here,
we employ a classical biharmonic viscosity, which is modified to be energetically
consistent. Concretely, we choose
\[
\disc{D}_{\text{bihar}}[\disc{h},\disc{u}]
 = -\frac{1}{\inp{\disc{h}}_{\grid{E}}} \ast \DLap \left(\disc{\nu}_{\text{h}} \ast \inp{\disc{h}}_{\grid{E}} \ast
 \left(\DLap \disc{u}\right)\right),
\]
where \(\DLap = \Dgrad \Ddiv - \sqrt{3}\Dgradp \Dcurl\) is a discrete approximation to an
anisotropic vectorial Laplace-Beltrami operator (see~\cite{thuburn2012framework}).

The appearance of \(\disc{h}\) is motivated by the form of physical viscosities in the shallow
water equation (see, e.g., \cite{Bresch:2009}), and the fact that the concrete form given above
leads to consistent energy dissipation in the discrete equation.
In fact, combining these terms by a choice of 
\[
\Fdisc[\disc{h},\disc{u}]
= \disc{D}_{\text{bihar}}[\disc{h},\disc{u}]
+ \disc{f}_{\text{drag}}[\disc{h},\disc{u}]
+ \disc{f}_{\text{wind}}[\disc{h}],
\]
we obtain for~\eqref{eq:sw_disc} the energy equality
\begin{multline*}
\dv{\Hdisc}{t} (\disc{V}) =
 - \sum_{k=1}^L\rho_k\inner{\disc{\nu}_{\text{h}} \ast \inp{\disc{h}_k}_{\grid{E}} \ast \DLap \disc{u}_k , \DLap \disc{u}_k}_{\grid{E}} \\
 - c_{\text{drag}} \inner{\inp{\sqrt{2 \disc{K}[\disc{u}_{l_{\text{bot}}}]}}_{\grid{E}}
   \ast \disc{u}_{l_{\text{bot}}}, \disc{u}_{l_{\text{bot}}}}_{\grid{E}}
 + \inner{\disc{u}_k, \disc{\tau}_{\text{wind}}}_{\grid{E}},
\end{multline*}
which shows that the smoothing and damping terms are energy dissipating.
The horizontal viscosity \(\disc{\nu}_{\text{h}} \in X_{\grid{I}}\) is usually chosen in a grid
dependent fashion. However, since we only employ quasi-uniform grids, we set it to a
constant in computations.

Additionally, in the multilayer case a vertical smoothing can be introduced in the
momentum equation in the form of a vertical Laplacian. This can be based on a mimetic
discretization of a vertical gradient and divergence. Since the
vertical mesh size is given by the layer thicknesses \(\disc{h}_k\), the vertical
Laplacian will depend non-linearly on the variable \(\disc{h}\). However, for the sake of
brevity, we omit a detailed presentation. We only note that in this case, the drag term
given above can also be interpreted as a Robin-like boundary condition for the vertical
Laplacian.

\section{Linearized operators}
\label{app:lin_op}

For convenience, we give the explicit form of the differential operators defined in
section~\ref{sec:TRiSK_multilayer}. The second variation of the
Hamiltonian~\eqref{eq:sw_ham_disc} can be computed as the linearization
of~\eqref{eq:var_hamil} as
\begin{equation*}
\delta^2\Hdisc[\disc{V}] \disc{W} = \delta\Hdisc'[\disc{V};\disc{W}] =
\begin{pmatrix}
g \left(R \disc{w}^h\right)_k + \rho_k \inp{\disc{u}_k \ast \disc{w}^u_k}_{\grid{I}}  \\
\rho_k \disc{u}_k \ast \inp{\disc{w}^h_k}_{\grid{E}} + \rho_k \inp{\disc{h}_k}_{\grid{E}} \ast \disc{w}^u_k
\end{pmatrix}_{k=1,2,\dotsc,L}\;,
\end{equation*}
for all \(\disc{W} = (\disc{w}^h,\disc{w}^u)\). In the case that \(\disc{V} =
\reference{\disc{V}} = (\reference{\disc{h}}, \disc{0})\), it can be represented by the
block-diagonal matrix
\begin{equation*}
\delta^2\Hdisc[\reference{\disc{V}}] =
\begin{pmatrix}
g R & 0  \\
0 & \diag_{k=1,2,\dotsc,L}(\rho_k\,\inp{\reference{\disc{h}}_k}_{\grid{E}})
\end{pmatrix}\;.
\end{equation*}
The linearization of \(\Jdisc\) from~\eqref{eq:J_disc} is given for all \(\disc{W} =
(\disc{w}^h,\disc{w}^u)\) as
\[
\Jdisc'[\disc{V};\disc{W}]
=
\diag_{k=1,2,\dotsc,L}
\frac{1}{\rho_k}
\begin{pmatrix}
0 & 0 \\
0 & \disc{Q}'[\disc{V}_k;\disc{W}_k]\\
\end{pmatrix}\;,
\]
where
\[
\disc{Q}'[\disc{V}_k;\disc{W}_k]\disc{y} =
\frac{1}{2}\left(
\inp{\disc{q}'[\disc{V}_k;\disc{W}_k]}_{\grid{E}} \ast \left(\Dkcross \disc{y}\right)
+ \Dkcross \big(\inp{\disc{q}'[\disc{V}_k;\disc{W}_k]}_{\grid{E}} \ast \disc{y}\big)
\right)
\]
and \(\disc{q}'[\disc{V}_k;\disc{W}_k]
= (\Dcurl \disc{w}^u_k) / \inp{\disc{h}_k}_{\grid{V}}
- (\Dcurl \disc{u}_k + \disc{f}) \ast \inp{\disc{w}^h_k}_{\grid{V}}  / \inp{\disc{h}_k}^2_{\grid{V}}\).
We remark that due the sparsity-pattern of \(\Dkcross\), which has the most entries of any
of the discrete operators considered (apart from the biharmonic smoothing term), this term is
expensive to evaluate in practice.

\section{Exponential Runge-Kutta schemes}
\label{app:exp_rk}

Exponential integrators can be given in terms of their Butcher tableau, which contains the
intermediate time points \(c_i\), the coefficients for the internal stages \(a_{i,j}\), and
the final coefficients \(b_j\) in the form
\[
\begin{array}{c|c}
c_i & a_{i,j}  \\
\hline
    & b_j
\end{array}\quad,
\]
which represents the method in terms of the remainder as
\begin{align*}
\disc{v}_{n,i} &= \exp(c_i\dt \disc{A}_n)\disc{V}_n + \dt \sum_{j=1,\dotsc,i-1} a_{i,j}(\dt \disc{A}_n)\disc{r}_n(\disc{v}_{n,j}), \\
\disc{V}_{n+1} &= \exp(\dt \disc{A}_n)\disc{V}_n + \dt \sum_{j=1,\dotsc,S}
                 b_j(\dt\disc{A}_n)\disc{r}_n(\disc{v}_{n,j}),
\end{align*}
where \(c_1 = 0\) and \(a_{i,j} = 0\) for \(j \geq i\) for explicit methods, which implies
\(\disc{v}_{n,1} = \disc{V}_n\). Under the simplifying assumptions
\(\sum_{j=1,\dotsc,S} b_j = \phi_1\) and
\(\sum_{j=1,\dotsc,i-1} a_{i,j} = \phi_1(c_i\cdot)\), these methods can be 
equivalently rewritten in terms of the residual as:
\begin{align*}
\disc{v}_{n,i} &= \disc{V}_n + \phi_1(c_i\dt \disc{A}_n)\disc{F}(\disc{V}_n)
                 + \dt \sum_{j=2,\dotsc,i-1} a_{i,j}(\dt \disc{A}_n)\disc{R}_n(\disc{v}_{n,j}), \\
\disc{V}_{n+1} &= \disc{V}_n + \phi_1(\dt \disc{A}_n)\disc{F}(\disc{V}_n)
                 + \dt \sum_{j=2,\dotsc,S} b_j(\dt\disc{A}_n)\disc{R}_n(\disc{v}_{n,j}).
\end{align*}
The two stage method from section~\ref{sec:exponential_int}
taken from~\cite{hochbruck2005explicit} is characterized by the Butcher tableau
\[
\begin{array}{c|cc}
0 & 0 & 0 \\
c_2 & c_2 \phi_1(c_2 \cdot) & 0 \\
\hline
  & \phi_1 - (1/c_2)\phi_2 & (1/c_2)\phi_2
\end{array}\quad,
\]
i.e.\ \(a_{2,1}(\cdot) = c_2 \phi_1(c_2 \cdot)\), \(b_1 = \phi_1 - (1/c_2)\phi_2\), and
\(b_2 = (1/c_2)\phi_2\), where \(c_2 \in (0,1)\).
A third-order three stage method (also taken from~\cite{hochbruck2005explicit}) is given by
the Butcher tableau
\[
\begin{array}{c|ccc}
0 & 0 & 0 & 0 \\
c_2 & c_2 \phi_1(c_2 \cdot) & 0 & 0\\
c_3 & c_3 \phi_1(c_3 \cdot) - a_{3,2} & \gamma c_2 \phi_2(c_2\cdot) + ({c_3^2}/{c_2})\phi_2(c_3\cdot) & 0\\
\hline
  & \phi_1 - b_2 - b_3 & ({\gamma}/({\gamma c_2 + c_3}))\,\phi_2 & ({1}/({\gamma c_2 + c_3}))\,\phi_2 
\end{array}\quad,
\]
where either \(c_3 = 2/3\), \(c_2 \in (0,1)\), and \(\gamma = 0\)
or \(c_2, c_3 \in (0,1)\setminus\{\,2/3\,\}\) and \(\gamma = - (3c_3^2 - 2c_3)/(3c_2^2 - 2c_2)\).


\bibliographystyle{siam}

\bibliography{Bibliography}

\end{document}